\documentclass[EJP, preprint]{ejpecp} 

\usepackage[dvipsnames]{xcolor}
\usepackage{tikz}
\usetikzlibrary{intersections,shapes,trees,arrows,spy,positioning,decorations,arrows.meta,decorations.pathreplacing,patterns,calc}
\usepackage{dutchcal} 
\usepackage{relsize}
\usepackage{dsfont}
\usepackage{subcaption} 
\usepackage{enumitem}
\usepackage{makecell}
\usepackage{setspace}
\usepackage{accents} 
\usepackage{soul}
\usepackage[normalem]{ulem}
\usepackage[low-sup]{subdepth} 
\usepackage[most]{tcolorbox}
\usepackage{hyperref}
\usepackage{pifont}
\usepackage{forest}
\usetikzlibrary{arrows.meta, positioning, matrix, decorations.pathreplacing, calc}

\SHORTTITLE{Genealogical structures under interactive neutral reproduction}

\TITLE{Genealogical structures under interactive neutral reproduction: factorial moment duality via a Frankenstein process}

\AUTHORS{%
  Ellen~Baake\footnote{Bielefeld University, Germany.
    \EMAIL{ebaake@techfak.uni-bielefeld.de}}\orcid{0000-0001-8970-2102}
  \and 
  Fernando~Cordero\footnote{BOKU University, Austria. \EMAIL{fernando.cordero@boku.ac.at}}\orcid{0000-0001-5959-0787}
  \and
    Hannah~Dopmeyer\footnote{Bielefeld University, Germany.
    \EMAIL{hannah.dopmeyer@uni-bielefeld.de}}\orcid{0009-0001-5505-6699}}
    
\KEYWORDS{moment duality; Moran model; ancestral influence graph; interactive neutral reproduction} 

\AMSSUBJ{60K35; 92D15; 60J25; 60J27; 60J70} 

\ABSTRACT{
We establish a genealogical framework for an existing analytical moment duality between a Wright–Fisher type SDE and a counting process with interaction. To achieve this, we construct a finite-population Moran model featuring interactive neutral reproduction as a novel mechanism. In the corresponding events, an individual, regardless of its own type, can only reproduce if a randomly encountered partner is of the ``fit'' type. This Moran model has a relatively simple counting process as its factorial moment dual, whose genealogical meaning appears to be cryptic: after all, the line-counting process of the natural genealogical process of the model, namely the ancestral influence graph (AIG), exhibits a complex hierarchical structure not reflected in the factorial moment dual. Since moment duality is a property in expectation, we are allowed to systematically remove information from the AIG and merge different realizations of the ancestry. We call the result the \emph{Frankenstein process}. Based on this, we establish the factorial moment duality from a genealogical perspective. The moment duality in the diffusion limit follows in a natural way.
}

\newcommand{\longtilde}[1]{%
  \raisebox{1.4ex}{$\sim$}\hspace{-0.9em}#1%
  }

\newcommand{\smalllongtilde}[1]{%
  \raisebox{1ex}{\scalebox{0.7}{$\sim$}}\hspace{-0.7em}#1%
}

\newcommand{\cupdot}{\mathbin{\dot{\cup}}}

\renewcommand{\widehat}[1]{#1}

\newcommand{\bs}[1]{\boldsymbol{#1}}

\newcommand{\Eb} {{\mathbb E}}
\newcommand{\Nb} {{\mathbb N}}
\newcommand{\Pb} {{\mathbb P}}
\newcommand{\Rb} {{\mathbb R}}

\newcommand{\As} {{\mathcal A}}
\newcommand{\as} {{\mathcal a}}
\newcommand{\Bs} {{\mathcal B}}
\newcommand{\Gs} {{\mathcal G}}
\newcommand{\gs} {{\mathcal g}}
\newcommand{\Ls} {{\mathcal L}}
\newcommand{\Ps} {{\mathcal P}}
\newcommand{\Ss} {{\mathcal S}}
\newcommand{\Ts} {{\mathcal T}}

\renewcommand{\r}{{r}}
\newcommand{\sP}{\reflectbox{$\mathcal{P}$}}
\newcommand{\smallsP}{\scalebox{0.75}{\reflectbox{$\mathcal{P}$}}}
\newcommand{\mirrorp}{\reflectbox{$p$}}
\newcommand{\sT}{\reflectbox{$\mathcal{T}$}}

\newcommand{\bX} {{\overline X}}
\newcommand{\dd} {{\rm d}}
\newcommand{\1} {{\mathds{1}}}

\definecolor{mypurple}{RGB}{105, 65, 125}
\definecolor{mygreen}{RGB}{150, 180, 80}

\newcommand{\narrowab}
{\begin{tikzpicture}[baseline={(0,-0.1)}]
\draw[-{Latex[length=1.8mm,fill=black]}] (0.5,0.15) -- (0,0.15);
\node[] at (0.25,-0.15) {\footnotesize {$\alpha, \beta$}};
\end{tikzpicture}}

\newcommand{\snarrowab}
{\scalebox{0.7}{
\begin{tikzpicture}[baseline={(0,-0.1)}]
\draw[-{Latex[length=1.8mm,fill=black]}] (0.5,0.15) -- (0,0.15);
\node[] at (0.25,-0.15) {\footnotesize {$\alpha, \beta$}};
\end{tikzpicture}}
}

\newcommand{\sarrowab}
{\begin{tikzpicture}[baseline={(0,-0.1)}]
\draw[-{Latex[length=1.8mm,fill=white]}] (0.5,0.15) -- (0,0.15);
\node[] at (0.25,-0.15) {\footnotesize{$ \alpha, \mathcal{B}$}};
\end{tikzpicture}}

\newcommand{\ssarrowab}
{\scalebox{0.7}{\begin{tikzpicture}[baseline={(0,-0.1)}]
\draw[-{Latex[length=1.8mm,fill=white]}] (0.5,0.15) -- (0,0.15);
\node[] at (0.25,-0.15) {\footnotesize{$ \alpha ,  \mathcal{B}$}};
\end{tikzpicture}}}

\newcommand{\carrowcba}{%
  \begin{tikzpicture}[baseline={(0,-0.1)}]
    \draw[-{Latex[length=1.8mm,fill=black]}] (0,0.15) -- (-0.4,0.15);
    \draw[dashed, -{Diamond[open,length=2.5mm]}] (0,0.15) -- (0.8,0.15);
    \node[] at (0.2,-0.15) {\footnotesize$\alpha, \, \, \beta, \, \, \gamma$};
  \end{tikzpicture}%
}

\newcommand{\scarrowcba}
 {\scalebox{0.7} {\begin{tikzpicture}[baseline={(0,-0.1)}]
    \draw[-{Latex[length=1.8mm,fill=black]}] (0,0.15) -- (-0.4,0.15);
    \draw[dashed, -{Diamond[open,length=2.5mm]}] (0,0.15) -- (0.8,0.15);
    \node[] at (0.2,-0.15) {\footnotesize$\alpha, \, \, \beta, \, \, \gamma$};
  \end{tikzpicture}}}

\newcommand{\carrowcbasetb}{%
  \begin{tikzpicture}[baseline={(0,-0.1)}]
    \draw[-{Latex[length=1.8mm,fill=black]}] (0,0.15) -- (-0.4,0.15);
    \draw[dashed, -{Diamond[open,length=2.5mm]}] (0,0.15) -- (0.8,0.15);
    \node[] at (0.2,-0.15) {\footnotesize $\alpha, \, (\beta, \gamma)$};
  \end{tikzpicture}%
  }

\newcommand{\varDiamond}{%
  \mathord{\tikz[baseline=-0.6ex] 
    \node[diamond, draw, line width=0.4pt,
          minimum width=2mm, minimum height=3mm,
          inner sep=0pt] {};
  }%
}

\newcommand{\ddownarrow}{\Downarrow}
\newcommand{\uuparrow}{\Uparrow}

\newcommand{\ineminus}[3]{\scalebox{0.8}{
  \begin{tikzpicture}[baseline={(0,-0.1)}]
    \draw[-{Latex[length=1.8mm,fill=black]}] (0,0.15) -- (-0.4,0.15);
    \draw[dashed, -{Diamond[open,length=2.5mm]}] (0,0.15) -- (0.8,0.15);
    \node[] at (0.2,-0.15) {\footnotesize ${#1}, ( {#2},{#3})$};
  \end{tikzpicture}}
}
\newcommand{\ine}[3]{\scalebox{0.8}{
  \begin{tikzpicture}[baseline={(0,-0.1)}]
    \draw[-{Latex[length=1.8mm,fill=black]}] (0,0.15) -- (-0.4,0.15);
    \draw[dashed, -{Diamond[open,length=2.5mm]}] (0,0.15) -- (0.8,0.15);
    \node[] at (0.2,-0.15) {\footnotesize ${#1}, {#2},{#3}$};
  \end{tikzpicture}}
}

\newcommand{\ftw}[2]
{\scalebox{0.8}{\begin{tikzpicture}[baseline={(0,-0.1)}]
\draw[-{Latex[length=1.8mm,fill=white]}] (0.5,0.15) -- (0,0.15);
\node[] at (0.25,-0.15) {\footnotesize{${#1},{#2}$}};
\end{tikzpicture}}}

\newcommand{\coal}[2]
{\scalebox{0.8}{\begin{tikzpicture}[baseline={(0,-0.1)}]
\draw[-{Latex[length=1.8mm,fill=black]}] (0.5,0.15) -- (0,0.15);
\node[] at (0.25,-0.15) {\footnotesize {${#1},{#2}$}};
\end{tikzpicture}}}

\newcommand{\ocoal}[2]{\mathrm{C}_{#1,#2}}
\newcommand{\oftw}[2]{\mathrm{F}_{#1,#2}}
\newcommand{\onint}[3]{\mathrm{I}_{#1,#2,#3}}
\newcommand{\onintminus}[3]{\mathrm{I}_{(#1,(#2, #3))}}
\newcommand{\omutr}[1]{\textrm{M}_{#1}^\times}
\newcommand{\omutb}[1]{\textrm{M}_{#1}^\circ}
\newcommand{\aigt}{\mathcal{A}_{[0,t]}}
\newcommand{\qaigt}[1]{\mathcal{A}_{[0,t]}^{-}}

\newcommand{\circa}
{\begin{tikzpicture}[baseline={(0,-0.08)}]
\node at (0.25, 0.15){\scriptsize $\circ$};
\node at (0.25,-0.15) {\footnotesize $\alpha$};
\end{tikzpicture}}

\newcommand{\scirca}
{\scalebox{0.7}{\begin{tikzpicture}[baseline={(0,-0.08)}]
\node at (0.25, 0.15){\scriptsize $\circ$};
\node at (0.25,-0.05) {\footnotesize $\alpha$};
\end{tikzpicture}}}

\newcommand{\timesa}
{\begin{tikzpicture}[baseline={(0,-0.08)}]
\node[] at (0.25, 0.15){\scriptsize $\times$};
\node[] at (0.25,-0.15) {\footnotesize $\alpha$};
\end{tikzpicture}}

\newcommand{\stimesa}
{\scalebox{0.7}{\begin{tikzpicture}[baseline={(0,-0.08)}]
\node[] at (0.25, 0.15){\scriptsize $\times$};
\node[] at (0.25,-0.05) {\footnotesize $\alpha$};
\end{tikzpicture}}}

\begin{document}

\section{Introduction}
Connecting the forward time evolution with ancestral processes backward in time 
is a central theme in mathematical population genetics. It is typically approached via various notions of duality; see \cite[Chaps.~3.4.4, 4.3, and 4.4]{Liggett10} and \cite{JK14} for the mathematical background, and \cite{Moehle99} for an overview of applications to population genetics. A particularly important notion is \emph{moment duality}, which links two real-valued continuous-time Markov processes $X=(X_t)_{t \geq 0}$ and $Z=(Z_t)_{t \geq 0}$ through their moments:
\begin{equation}\label{eq.moment_duality}
\Eb[X_t^z\mid X_0=x]=\Eb[x^{Z_t} \mid Z_0=z]
\end{equation}
for all $t\geq 0$ and for all possible initial states $x$ and $z$ of $X$ and $Z$, respectively.
In the context of population genetics, $X$ typically represents a frequency process in the forward model, while $Z$ is the line-counting process of an appropriate ancestral process obtained by tracing back the genealogy of a sample taken from the present population; in the simplest case, $Z_t$ is the number of ancestors of the sample at time $t$ before the present.

Major challenges arise when the reproduction of individuals depends on their types or on the composition of the population; the genealogies of (samples of) individuals then interact with the types as well. For a model with two types and selection (that is, one, which is termed fit, reproduces faster than the other, which is termed unfit), the Gordian knot was cut by Krone and Neuhauser \cite{KN97} with the introduction of the
ancestral selection graph (ASG), which contains all individuals that, as long as the types are unknown, may \emph{potentially} be ancestors of the given sample; $Z$ then is the line-counting process of this graph. 

In recent years, several variants of the ASG and the corresponding dualities have been developed and applied to specific questions, see, for example, \cite{BCH22,Boenkostetal2021,Cordero17,CorderoMoehle19,CV23,Grevenetal2016,PfaffelhuberPokalyuk2013}; in all these cases, the moment dual (or a closely-related factorial moment dual) retains the meaning of the line-counting process of some genealogical construction that consists of the potential ancestors of a sample. 

However, this interpretation is not available for all moment dualities arising in population-genetic processes. Consider, for example, the solution $X$ to the stochastic differential equation (SDE)
\begin{equation}\label{eq.SDE}
\dd X_t = \sqrt{X_t(1-X_t)(1-\kappa X_t)}\,  \dd B_t, \quad (B_t)_{t \geq 0} \; \text{standard Brownian motion},
\end{equation}
which arises as the diffusion limit of various two-type population models: a Wright--Fisher model with within-generation variance in offspring number \cite{Gillespie74}, a logistic branching process at carrying capacity \cite{DPK25}, the Wright--Fisher model with efficiency \cite{GMP20}, as well as its generalization, the two-size Wright--Fisher model \cite{ACD25}. As shown in \cite{GPP21} by purely analytical means, this SDE admits a moment dual whose transitions can be interpreted as coalescence events, where two lines merge into one, and pairwise branching events, where a pair of lines gives rise to a single new one. As pointed out by the authors, the conceptual connection between the dual and an ancestral process related to the forward model \eqref{eq.SDE} and, in particular, the interpretation of pairwise branching, remain unclear.

There are two standard approaches to establishing moment duality. The first relies on a generator argument and is purely analytic; this is the method used in \cite{GPP21}. The second approach is more involved, but it renders the origin of the dual process more transparent. It proceeds by first constructing duals for an approximating sequence of finite-population models (for instance, Moran or Wright--Fisher models), and then recovering the moment duality by passing to the diffusion limit. At the finite-population level, the duality is obtained by introducing a suitable graphical representation of the forward model, constructing an ancestral process starting from a sample from the population at present, and then reversing time in the graphical picture. The resulting finite-level duality is usually a \emph{factorial} moment duality, see \cite{Cannings74,Gladstien77a,Gladstien77b,Gladstien78}, and arises from an explicit coupling between the forward process and its backward (ancestral) counterpart, as established in \cite{Moehle99}. 

Although the second approach is more involved, it provides deeper insight into the duality relation and endows the dual process with a clear genealogical interpretation. In contrast to the generator method -- which typically works best once a good candidate for the dual is already known --, the use of graphical representations often allows one to identify the appropriate ancestral process even when the interaction mechanisms are complex. In this sense, it illuminates the “black box” underlying generator calculations. Prominent examples include the original construction of the ASG \cite{KN97}, its extension to frequency-dependent selection \cite{K99}, the treatment of generalized coalescence mechanisms \cite{CorderoMoehle19}, the extension of the ASG to Cannings models \cite{Boenkostetal2021}, and recent work in random environments \cite{CV23}. When (deleterious and beneficial) mutations are present, the ASG can be reduced to its informative components by pruning certain lines, thereby yielding a (factorial) moment dual once again; see, for instance, \cite{BCH18,BEH23,BLW16,BW18,CV23,Lenz+15}.

Our goal is to recover the moment duality using this second approach. Our graphical representation is based on a Moran model with interactive neutral reproduction, which has \eqref{eq.SDE} as its diffusion limit. In an interactive neutral reproduction event, an individual initiates an event by sampling (uniformly with replacement) two individuals from the population. The type of the first, called the \emph{checking} individual, determines whether the second reproduces: if the checking individual is fit, the second individual reproduces and its offspring replaces the individual that initiated the event; otherwise, nothing happens.

The standard (and, in a sense, natural) tool to handle the genealogy of this kind of model is the so-called ancestral influence graph (AIG) of Donnelly and Kurtz \cite{DK99a}, a far-reaching generalization of the ASG. In contrast to the ASG, it does not only trace back potential \emph{ancestors} of a sample, but all potential \emph{influencers}, that is, all individuals that may affect the type of the sample; in particular, it may include lines that will never be ancestral themselves, such as the checking lines in our model. In addition, the AIG contains information about the role(s) that every line plays in each of the events in the graph. Specifically, the AIG induced by our model with interactive neutral reproduction contains ternary branchings, which add the two individuals sampled at a potential reproduction event, along with the information which of the two is the checking line and the potential parent, respectively. It is already clear at this point that the line-counting process of the AIG cannot agree with the moment dual: ternary branching is present in the AIG, but absent in the moment dual. Also, to the best of our knowledge, no alternative particle picture matching the forward and backward evolution is known.

The complexity of ancestral graphs may often be reduced by carefully pruning (using the information contained in mutation events) and reordering (using exchangeability) certain lines; in some cases, a moment dual may be obtained in this way. Notably, this happens in the killed ASG \cite{BCH18,BEH23,CV23} (with its predecessor approach \cite{ShUch86}); see also the review \cite{BW18}. In other cases, where the interactions are more complex, the pruning does simplify the graph to some extent, but the roles of the potential influencers form a hierarchy that cannot be completely removed. Specifically, this was described in \cite{BCH22}, where selective reproduction with pairwise interaction was considered; there is no moment dual, but a more complicated tree-valued dual process. Our situation here is different: the results of \cite{GPP21} imply the existence of a moment dual, and we aim to recover and conceptually understand it from the point of view of the genealogical structure.

It will turn out as a key feature that, unlike in the examples mentioned so far, our pre-limit factorial moment duality does not hold pathwise; so there is no obvious conversion of the forward picture into a backward one. This is supported by the observation that an unfit sample may admit fit potential influencers, which contradicts the common intuition behind (pathwise) factorial moment duality. Instead, our method heavily relies on duality as a property in expectation, which allows more freedom in the construction. 
We make use of this by removing the information about the roles of parental and checking lines from the AIG and treat the two different events simultaneously. This unites different realizations, which, apart from the roles, share the same information. The resulting reduced version of the AIG is called a \emph{quasi-AIG}. Our approach is then based on the notion of \emph{configurations}, ordered and typed samples, which arise in the course of the backward type propagation of an initial sample according to the rules of our Moran model and its quasi AIG. These configurations will then be permuted and merged, even those coming from different realizations (``worlds'') with respect to the roles of parental and checking lines. We refer to this as the \emph{Frankenstein method} and envisage that it provides a promising framework for the understanding of dualities in models with interactions.

The model we are going to treat will, beyond interactive and non-interactive neutral reproduction, also contain frequency-dependent selection and mutation; in the diffusion limit, it will lead to the SDE \eqref{eq.full_SDE}. We have, so far, concentrated on interactive neutral reproduction because the other evolutionary forces do not require the specific Frankenstein treatment. Due to the additive structure of duality, the existence of a moment dual for our general version of the Moran model follows by combining \cite{BEH23} and \cite{GPP21}.

This paper is organized as follows. Section~\ref{sec:MoMo} defines the Moran model with interactive neutral reproduction, frequency-dependent selection, and mutation, and states the factorial moment duality as our main result. Section \ref{sec:natural_ancestry} investigates the natural ancestry; it begins with the graphical representation and the ancestral influence graph, establishes what we call a pre-duality, and concludes with the backward type propagation. Section~\ref{sec:our_approach} forms the core of our paper and is centered around the Frankenstein process as the connection between the natural ancestry and the factorial moment dual. The section starts with graphical heuristics motivating our approach, which are then rigorously transformed into a proof of the factorial moment duality. Finally, Section~\ref{sec:moment_duality} considers the diffusion scaling limits of the Moran model and the factorial moment dual to deduce the corresponding moment duality.

\section{Model and main results}\label{sec:MoMo}
Consider a panmictic haploid population consisting of $N\in \Nb$ individuals, which can be either of type $0$ or type $1$. We classify type $0$ as \emph{fit} or \emph{beneficial} and type $1$ as \emph{unfit} or \emph{deleterious}. The population evolves in continuous time and is governed by (non-interactive) neutral reproduction, interactive neutral reproduction, (frequency-dependent) selection, and mutation; that is, at any given time, every individual can independently perform any of the following events:
\begin{itemize}
    \item (Non-interactive) neutral reproduction: At rate $\r/2$, every individual, independently of its type, is replaced by the offspring of a randomly chosen individual from the population. 
    \item Interactive neutral reproduction: At rate $\kappa/2$, every individual, again independently of its type, initiates an interactive neutral reproduction by choosing two individuals from the population uniformly with replacement. The first individual of this pair is the \emph{checking individual}, which decides whether the reproduction takes place: if it is fit, the second individual of the pair reproduces, and its offspring replaces the initiator of the event. If the checking individual is unfit, nothing happens.
    \item Selection is understood as fittest type wins (FTW) selection, where an FTW selection event of order $m$ consists of a single individual that initiates the event and a group of $m\geq 1$ individuals chosen from the population with replacement. If at least one of the group members is fit, a fit offspring is created, which replaces the individual that initiated the event; otherwise, nothing happens. At rate $s_m$, every individual independently initiates an FTW event of order $m$. We assume $0 < \sum_{m=1}^\infty m \, s_m < \infty$, see \cite{GS18} and \cite{BEH23} for other models with FTW selection. The case $s_1>0, s_m=0$ for all $m>1$ is known as genic selection in biology.
    \item Mutation: at rate $u\nu_j$, every individual undergoes a mutation to type $j \in \{0,1\}$, where $\nu_0+\nu_1=1$. 
\end{itemize}
We call this model a \emph{Moran model (MoMo) with interactive neutral reproduction, FTW selection, and mutation}. We denote by $X^{(N)}_t$ the number of unfit individuals
alive at time $t \geq 0$. The process $X^{(N)}=(X^{(N)}_t)_{t \geq 0}$ is a birth-death process on $[N]_0\coloneqq [N] \cup \{0\}$ (where $[N] \coloneqq \{1,2,...,N\}$) and is characterized by its transition rates
\begin{equation*}
 n \in [N]_{0} \! \to \!
 \begin{cases}
     n+1 \quad & \text{at rate } \frac{r}{2}\, (N-n) \,\frac{n}{N}+\frac{\kappa}{2}\,(N-n)\,\frac{n}{N} \frac{N-n}{N}+(N-n) u \nu_1, \\[3pt]
    n-1 \quad & \text{at rate } \frac{r}{2} \,n \,\frac{N-n}{N}+\frac{\kappa}{2} \, n \,(\frac{N-n}{N})^2 +   n u \nu_0 +\sum_{m=1}^\infty s_m n \Big( 1-\big( \frac{n}{N}\big)^m\Big).
 \end{cases}   
\end{equation*}

For $\r=1$ and $\kappa=0$ (that is, without interactive neutral reproduction), the model reduces to the one tackled in \cite{BEH23}. Throughout the manuscript, we will ignore potential reproduction events that are \emph{silent}, that is, the potential parent agrees with the potential offspring individual; this will be made explicit
in more detail later. Furthermore, we will sometimes translate sampling \emph{with} replacement, as assumed in the FTW selection events above, into sampling \emph{without} replacement. Recall that, with rate $s_m$ ($m \in [N]$), we draw a random tuple, that is, a random element of $[N]^m$. The equivalent version without replacement reads: at rate $\tilde s_j, j \in [N]$, we draw a random set $\Bs \subseteq [N]$ with $\lvert \Bs \rvert = j$, where 
\begin{equation}\label{eq.stilde}
\tilde s_j = \sum_{m=j}^{\infty} s_m p^0_{mj},
\end{equation}
where $p_{mj}^n$ is the probability that, when sampling $m$ lines with replacement from the set $[N]$, exactly $j$ distinct lines are not in the set $[n]$.
While we do not need the explicit form of $p_{m j}^n$, we provide it here for completeness (and derive it in the appendix) as 
\begin{equation}\label{eq.pnmj}
p_{m j}^n
= \frac{C_{m j}^n}{N^m},
\qquad
C_{m j}^n
= (N-n)^{\underline{j}}
\sum_{\ell=j}^m \binom{m}{\ell}
\genfrac\{\}{0pt}{}{\ell}{j}\, n^{m-\ell},
\end{equation}
where $x^{\underline{j}}=x(x-1)\cdots(x-j+1)$ is the falling factorial, and $\genfrac\{\}{0pt}{}{\ell}{j}$ denotes the Stirling numbers of the second kind. 
The special case required for \eqref{eq.stilde} is $p^0_{mj}=\frac{N^{\underline{j}}}{N^m}\genfrac\{\}{0pt}{}{\ell}{j}$; the case $p^{n}_{mj}$ for $n>0$ will appear in the next definition.

As is often the case in the Moran setting, the type composition process can be related to a Markov chain on $[N]_{0}^\Delta\coloneqq [N]_0\cup\{\Delta\}$ via duality. Before stating this precisely, 
we introduce the corresponding Markov chain. Our configuration approach will directly lead to this definition and its interpretation as an ancestral process. In contrast, finding the dual via purely algebraic arguments would be difficult and the meaning less transparent.
\begin{definition}\label{def.factorial_dual}
Let $Z^{(N)}=(Z^{(N)}_t)_{t \geq 0}$ be the continuous-time Markov chain on $[N]_{0}^\Delta$ with transitions
\begin{equation*}
    n\in[N]_{0} \to \begin{cases}
        n + j & \text{at rate } n \sum_{m=j}^{\infty} s_m \,p_{mj}^n
        \text{ for }  j\geq 2,\\[5pt]
        n+1 & \text{at rate } \frac{\kappa}{N} \binom{n}{2} \frac{N-n}{N} + n \sum_{m=1}^{\infty}  s_m \, p^n_{m1}, \\[5pt]
        n-1 & \text{at rate } \frac{1}{N} \binom{n}{2} \big( r + \kappa \, \frac{N-(n-1)}{N}\big)  + u\nu_1 n ,\\[5pt]
        \Delta & \text{at rate } u\nu_0 n,
    \end{cases}
\end{equation*}
with $p_{mj}^n$ of \eqref{eq.pnmj}.
\end{definition}

The following theorem is the main result of this paper, both for the factorial moment duality relation it establishes and, even more importantly, for the novel methodology developed for its proof.
As soon as $Z^{(N)}$ is known, the factorial moment duality could be proved via generators in a relatively straightforward way. However, finding the dual is usually more involved than proving it. We address this by using the forward model to systematically derive the dual and establish its genealogical understanding from scratch.

\begin{theorem}[Factorial moment duality]\label{thm.factorial_duality}
Let $X^{(N)}$ be the counting process in the MoMo of size $N$ with interactive neutral reproduction, selection, and mutation, and let $Z^{(N)}$ be as in Def. \ref{def.factorial_dual}.
Then $X^{(N)}$ and $Z^{(N)}$ are dual, that is, for all $t \geq 0$, $n \in [N]_{0}^\Delta$, and $i \in [N]_0$, we have
\begin{equation*}
    \Eb\left[ \frac{ \big( X^{(N)}_t \big)^{\! \vphantom{(N)} \underline{n}}}{N^{\underline{n}}}\, \Big\vert \, X_0^{(N)}=i \right] = \Eb\left [ \frac{i^{\underline{Z^{(N)}_t}}}{N^{\underline{Z^{(N)}_t}}}\, \Big\vert\,  Z_0^{(N)}=n  \right],
\end{equation*}
where $i^{\underline{\Delta}}/N^{\underline{\Delta}}\coloneqq 0$ for all $i$.
We refer to this relation as a \emph{factorial moment duality}.
\end{theorem}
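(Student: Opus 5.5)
The plan is to prove Theorem~\ref{thm.factorial_duality} by the standard generator criterion for duality of Markov chains on finite state spaces. Set $H(i,n)\coloneqq i^{\underline n}/N^{\underline n}$ for $i\in[N]_0$, $n\in[N]_0$, and $H(i,\Delta)\coloneqq 0$. Let $L_X$ and $L_Z$ be the generators of $X^{(N)}$ and $Z^{(N)}$. Both chains live on finite sets and $H$ is bounded. So, by the usual argument (e.g.\ via $\frac{\dd}{\dd s}\,\Eb[H(X_s,Z_{t-s})]=0$ for independent copies, cf.\ \cite{JK14}), it suffices to verify
\begin{equation*}
  L_X H(\cdot,n)(i) \;=\; L_Z H(i,\cdot)(n) \qquad\text{for all } i\in[N]_0,\ n\in[N]_0^\Delta .
\end{equation*}
The case $n=\Delta$ is trivial, since $\Delta$ is absorbing and $H(\cdot,\Delta)\equiv 0$. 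Both generators are sums over the four mechanisms, and the dual rates in Def.~\ref{def.factorial_dual} are likewise additive. I will therefore check the identity mechanism by mechanism, in a suitable regrouping: the two neutral mechanisms together produce the coalescence rate $\frac1N\binom n2\bigl(r+\kappa\frac{N-(n-1)}{N}\bigr)$, and interactive reproduction additionally produces the branching rate $\frac{\kappa}{N}\binom n2\frac{N-n}{N}$. Throughout, the basic tools are
\begin{equation*}
H(i+1,n)-H(i,n)=\frac{n\,i^{\underline{n-1}}}{N^{\underline n}},\qquad
H(i-1,n)-H(i,n)=-\frac{n\,(i-1)^{\underline{n-1}}}{N^{\underline n}},
\end{equation*}
together with $i^{\underline{n-1}}-(i-1)^{\underline{n-1}}=(n-1)(i-1)^{\underline{n-2}}$ and the lowering identity $(N-i)\,i^{\underline{k}}=(N-k)\,i^{\underline k}-i^{\underline{k+1}}$.

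For the non-interactive neutral part, the up and down rates are both $\frac r2 i\frac{N-i}{N}$. The forward contribution is therefore $\frac{r}{N}\binom n2 (N-i)\,i^{\underline{n-1}}/N^{\underline n}$. One application of the lowering identity rewrites this as $\frac rN\binom n2\bigl(H(i,n-1)-H(i,n)\bigr)$, which is the classical Moran coalescence.

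For the interactive part, the up rate is $\frac\kappa2 (N-i)\frac{N-i}{N}\frac iN$ and the down rate is $\frac\kappa2 i\bigl(\frac{N-i}{N}\bigr)^2$. They share the factor $\frac\kappa2 i(N-i)^2/N^2$, so the net contribution is
\begin{equation*}
\frac{\kappa}{N^2}\binom n2\,\frac{(N-i)^2\, i^{\underline{n-1}}}{N^{\underline n}}.
\end{equation*}
Applying the lowering identity twice expands $(N-i)^2 i^{\underline{n-1}}$ into a combination of $i^{\underline{n-1}}$, $i^{\underline n}$ and $i^{\underline{n+1}}$. Dividing by $N^{\underline n}$ and using $N^{\underline{n\pm1}}=N^{\underline n}(N-n)$ resp.\ $N^{\underline n}/(N-n+1)$, this should match
\begin{equation*}
\frac{\kappa}{N}\binom n2\frac{N-(n-1)}{N}\bigl(H(i,n-1)-H(i,n)\bigr)+\frac{\kappa}{N}\binom n2\frac{N-n}{N}\bigl(H(i,n+1)-H(i,n)\bigr).
\end{equation*}
Getting the exact coefficients of the pairwise-branching term to line up (including the $N-n$ versus $N-(n-1)$ factors) is the step I expect to be the only delicate bookkeeping. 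I will do it by comparing coefficients of $i^{\underline{n-1}}, i^{\underline n}, i^{\underline{n+1}}$ separately.

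For mutation, the terms are linear in $i$, and the identity follows directly: mutation to type $1$ gives the move $n\to n-1$ at rate $u\nu_1 n$, and mutation to type $0$ gives the killing $n\to\Delta$ at rate $u\nu_0 n$. For FTW selection I will not redo the computation. This mechanism with these exact dual rates $n\sum_m s_m p^n_{mj}$ is precisely the setting of \cite{BEH23} (case $r=1$, $\kappa=0$), and by additivity of generators its contribution can be taken from there. If a self-contained check is wanted, it amounts to the sampling interpretation: $\Eb\bigl[\prod\text{(all $n$ sampled plus the new distinct group members are unfit)}\bigr]$, with $p^n_{mj}$ counting the new distinct lines. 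Summing all mechanisms gives the generator identity and hence the theorem. (The paper itself will presumably instead derive this genealogically via the Frankenstein process; the generator check above is the fallback that certifies the result.)
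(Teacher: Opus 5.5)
Your argument is correct, but it takes a genuinely different route from the paper's. You verify the generator criterion $L_X H(\cdot,n)(i)=L_Z H(i,\cdot)(n)$ mechanism by mechanism. The paper remarks that this is possible once $Z^{(N)}$ is known, but it deliberately does not do it.

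The step you flagged does close. Two applications of the lowering identity give $(N-i)^2 i^{\underline{n-1}}=(N-n+1)^2 i^{\underline{n-1}}-(2N-2n+1)\,i^{\underline n}+i^{\underline{n+1}}$. Since $2N-2n+1=(N-n+1)+(N-n)$, dividing by $N^{\underline n}$ gives exactly the rates $\frac{\kappa}{N}\binom n2\frac{N-(n-1)}{N}$ and $\frac{\kappa}{N}\binom n2\frac{N-n}{N}$ of Def.~\ref{def.factorial_dual}.

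For selection, instead of relying on \cite{BEH23}, the self-contained check is the identity $\sum_{j\ge 0}p^n_{mj}H(i,n+j)=(i/N)^m H(i,n)$, with $p^n_{m0}=(n/N)^m$. It holds because $\Gamma^{(i)}$ has exactly $i$ unfit entries, so given the configuration each of the $m$ draws with replacement is unfit with probability $i/N$. This matches the forward rate $s_m\, i(1-(i/N)^m)$. Alternatively, the citation works: their duality yields the full generator identity for $r=1,\kappa=0$, and subtracting the neutral and mutation parts you verified leaves the selection part.

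The paper instead derives the duality genealogically. Its steps are a pre-duality in terms of configurations compatible with the AIG, and a quasi-AIG that forgets which added line is checking and which is incoming. Next, a Frankenstein matching glues cylinders from the two role assignments, after permutations, into single $R$-cylinders. Finally, a messy process with an interpolation argument shows equality of sampling probabilities in expectation only, not pathwise.

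Your route buys brevity and avoids all of that delicate bookkeeping. However, it needs the dual rates as input and explains nothing about them. The paper's route produces $Z^{(N)}$ from the forward model and interprets pairwise branching and the extra coalescence term; that is its actual contribution. The coefficients $N-n+1$ and $N-n$ in your expansion are exactly the counts $n(n-1)(N-n+1)$ and $n(n-1)(N-n)$ in the paper's computation of the $R$-counting rates of the Frankenstein process.
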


The factorial moment duality states that the probability of obtaining only unfit individuals when sampling $n$ individuals from the population at time $t$ can be computed in two different ways: either in the obvious way by running the \emph{forward} process $X^{(N)}$, starting from the initial frequency $i/N$ and sampling $n$ individuals (without replacement) according to $X^{(N)}_t$ \emph{(left-hand side)}; or by running the \emph{backward process} $Z^{(N)}$ starting from sample size $n$, and sampling $Z^{(N)}_t$ individuals (again without replacement) from the initial population \emph{(right-hand side)}. One is therefore tempted to interpret Theorem~\ref{thm.factorial_duality} as saying that the probability of obtaining an entirely unfit sample at time $t$ when sampling $n$ individuals is equal to the probability of obtaining an entirely unfit sample from the initial population when sampling $Z^{(N)}_t$ potential ancestors. 
This interpretation is correct in the limiting case $\kappa=0$. Indeed, in \cite{BEH23} the authors proved that $Z^{(N)}_t$ is the number of potential ancestors at time~$t$ before the present, and that the sample is unfit if and only if all potential ancestors are unfit. However, an analogous connection is far from obvious when $\kappa \in (0,1)$, once again due to the missing link between the forward and backward perspectives.

In Section~\ref{subsec.difflimit_X}, we will show that the solution 
$X=(X_t)_{t \geq 0}$ of the SDE
\begin{equation}\label{eq.full_SDE}
        \dd X_t = \Big(-\sum_{m=1}^{\infty}\big(\sigma_m X_t(1-X_t^m)\big) -\theta \, \nu_0 X_t + \theta \, \nu_1 (1-X_t)\Big) \dd t + \sqrt{X_t(1-X_t)\big(\r+\kappa(1- X_t)\big)} \dd B_t,
\end{equation}
again with $B$ standard Brownian motion, arises as the diffusion limit of the MoMo. In Section~\ref{subsec.difflimit_R}, we will establish the process $Z=(Z_t)_{t \geq 0}$ on $\Nb_{0,\Delta}\coloneqq \Nb \cup \{0, \Delta\}$, characterized by its transitions\begin{equation}\label{eq.diff_limit_Z}
        n \in \Nb  \to \begin{cases}
            n+m &\text{at rate } \sigma_m n \text{ for }m\geq 2, \\
            n+1 &\text{at rate } \kappa \frac{n}{2}+\sigma_1n, \\
            n-1 &\text{at rate } \frac{n}{2}(r+\kappa) + \theta \nu_1 n,  \\
            \Delta &\text{at rate }  \theta \nu_0 n,
        \end{cases}
    \end{equation}
as the diffusion limit of the process $Z^{(N)}$.
For $\r=1-\kappa$, $u=0$, and $s_m=0$ for all $m>0$, the diffusion limit \eqref{eq.full_SDE} of the forward model reduces to the SDE ~\eqref{eq.SDE} and the diffusion limit of $Z$ to the birth-death process with pairwise branching and coalescence events also mentioned in the introduction. As a consequence of these limits and the factorial moment duality from Theorem \ref{thm.factorial_duality}, we obtain the subsequent corollary.

\begin{corollary}[Moment duality]\label{corollary_moment_duality}
    The processes $X$ and $Z$ are moment dual, that is, for all $t \geq 0$, $x \in [0,1]$, and $n\in \Nb$, we have
    \[
    \Eb[X_t^n \mid X_0=x]=\Eb[x^{Z_t}\mid Z_0=n].
    \]
We refer to this relation as \emph{moment duality}.
\end{corollary}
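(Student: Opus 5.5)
The corollary will follow from Theorem~\ref{thm.factorial_duality} by passing to the diffusion limit on both sides of the factorial moment duality. I will take $N\to\infty$ with the scaling of Section~\ref{sec:moment_duality}: time is sped up by $N$, and the selection and mutation parameters are scaled as $s_m=\sigma_m/N$ and $u=\theta/N$. The neutral parameters $r$ and $\kappa$ stay fixed, so that their contributions, which are of order $1/N$ per individual in the rates, produce the diffusion term. Fix $x\in[0,1]$ and $n\in\Nb$, and choose initial states $i_N\in[N]_0$ with $i_N/N\to x$. I will apply Theorem~\ref{thm.factorial_duality} at time $Nt$ with $X_0^{(N)}=i_N$ and $Z_0^{(N)}=n$. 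The proof then needs three ingredients: convergence of the forward processes, convergence of the dual processes, and passage of the limit through both expectations.

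\textbf{Forward side.} I will show that $(X^{(N)}_{Nt}/N)_{t\ge0}$ converges in distribution in $D([0,\infty),[0,1])$ to the solution $X$ of \eqref{eq.full_SDE}. The standard route is generator convergence: I will Taylor expand the generator applied to $f\in C^2([0,1])$ using the birth and death rates stated for $X^{(N)}$.
\begin{itemize}
\item The drift comes from the selection and mutation terms. For example, $\sum_m (\sigma_m/N)\, n(1-(n/N)^m)$, multiplied by the time factor $N$ and the increment $1/N$, gives $\sigma_m x(1-x^m)$.
\item The neutral terms cancel in the drift. The $r$-terms give $\tfrac r2 x(1-x)$ in both directions. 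The $\kappa$-terms give $\tfrac\kappa2 x(1-x)^2$ up and $\tfrac\kappa2 x(1-x)^2$ down, so they also cancel.
\item The diffusion coefficient is the sum of these symmetric up and down rates, which is $x(1-x)(r+\kappa(1-x))$, matching \eqref{eq.full_SDE}.
\end{itemize}
Pathwise uniqueness of \eqref{eq.full_SDE} holds because the diffusion coefficient is $1/2$-Hölder (Yamada--Watanabe), so the martingale problem is well posed. Convergence then follows from, e.g., Ethier--Kurtz, Thm.~4.8.2 together with compactness of the state space. Since the time-$t$ marginal of $X$ is continuous at fixed $t$, I obtain $\tfrac{(X^{(N)}_{Nt})^{\underline n}}{N^{\underline n}} \to X_t^n$ in distribution. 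This quantity is bounded by $1$, so the left-hand side converges to $\Eb[X_t^n\mid X_0=x]$.

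\textbf{Dual side.} I will show that $(Z^{(N)}_{Nt})_{t\ge0}$ converges to the chain $Z$ of \eqref{eq.diff_limit_Z}. Both are chains on the discrete space $\Nb_{0,\Delta}$, so it suffices to check convergence of the rates from each fixed state $k$ after multiplying by $N$, and to exclude explosion of the limit.
\begin{itemize}
\item For fixed $k$, $p^k_{mj}\to\delta_{mj}$ as $N\to\infty$, because $m$ draws from $[N]$ are asymptotically distinct and avoid $[k]$. Hence $N\cdot k\,\tfrac{\sigma_m}{N}p^k_{mj}\to\sigma_m k$ on the diagonal $j=m$. Dominated convergence, using $\sum_m m\sigma_m<\infty$, handles the sum over $m$.
\item The remaining rates are straightforward: $N\cdot\tfrac\kappa N\binom k2\tfrac{N-k}{N}\to\kappa\binom k2$ and $N\cdot\tfrac1N\binom k2(r+\kappa\cdots)\to\binom k2(r+\kappa)$.
\end{itemize}
Here I must reconcile these limits with \eqref{eq.diff_limit_Z}, whose rates are linear in $n$ (for instance $\kappa\tfrac n2$ rather than $\kappa\binom n2$). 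I expect the intended scaling in Section~\ref{sec:moment_duality} to be arranged so that the rates match, for example via a different normalisation of the neutral rates. Once the limiting rates are identified, non-explosion follows from $\sum_m m\sigma_m<\infty$, since jump sizes have finite mean and the rates grow at most linearly in the state. Convergence of finite-dimensional distributions then follows, e.g., from the standard result on convergence of Markov chains with converging $Q$-matrices on a countable space.

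\textbf{Main obstacle: passing to the limit on the right-hand side.} The integrand $g_N(k)=i_N^{\underline k}/N^{\underline k}$ converges to $x^k$ for every fixed $k$, and $g_N(\Delta)=0=x^\Delta$ by convention. However, the convergence is not uniform in $k$. I will use $0\le g_N\le1$ together with tightness of $Z^{(N)}_{Nt}$, which comes from a uniform-in-$N$ linear growth bound on the upward rates. Given $\varepsilon>0$, tightness supplies a $K$ with $\Pb(Z^{(N)}_{Nt}>K)<\varepsilon$ for all $N$. On $\{Z^{(N)}_{Nt}\le K\}$, uniform convergence of $g_N$ on $[K]_0$ and convergence in distribution give $\Eb[g_N(Z^{(N)}_{Nt})]\to\Eb[x^{Z_t}]$.

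Equating the two limits yields the claimed moment duality, first for $x$ that are limits of such $i_N/N$. This already covers every $x\in[0,1]$; alternatively, both sides are continuous in $x$.
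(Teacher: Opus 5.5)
Your architecture is the paper's: apply Theorem~\ref{thm.factorial_duality} at time $Nt$, combine the forward diffusion limit (Prop.~\ref{prop.diffusion_limit}) with the convergence of the rescaled dual (Prop.~\ref{diffusion_limit_R}), and pass to the limit on both sides as in \cite[Proof of Thm.~2.16]{BEH23}. Your forward side and your truncation argument for the right-hand side are fine.

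The gap is on the dual side. You correctly find the limiting neutral rates $\kappa\binom{k}{2}$ for pairwise branching and $(r+\kappa)\binom{k}{2}$ for coalescence. You then leave the clash with the linear rates in \eqref{eq.diff_limit_Z} open, and hope that some other normalisation of the neutral rates reconciles them. No normalisation can do this. The neutral part of $\mathcal{L}_X$ applied to $x^n$ is $\binom{n}{2}\bigl[(r+\kappa)(x^{n-1}-x^n)+\kappa(x^{n+1}-x^n)\bigr]$, so only the quadratic rates are dual to $X$. With the linear rates the statement is already false for $n=1$ and $\theta=\sigma_m=0$: the left side equals $x$ because $X$ is a martingale, whereas $\frac{\dd}{\dd t}\Eb[x^{Z_t}]\big|_{t=0}=\frac{1-x}{2}\bigl(r+\kappa(1-x)\bigr)>0$ for $x<1$ whenever $r+\kappa>0$. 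The rates displayed in \eqref{eq.diff_limit_Z} are a misprint; the generator $\Ls_Z^{\mathrm{c}}+\Ls_Z^{\mathrm{pb}}$ in the proof of Prop.~\ref{prop.convergence_rates} uses $\binom{n}{2}$. You should simply commit to the limit you computed.

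Committing to the quadratic rates, however, breaks the two auxiliary steps you justify by linear growth. The pairwise-branching rate is $\kappa\binom{n}{2}$ for $Z$ and $\kappa\binom{n}{2}\frac{N-n}{N}$ for the rescaled $Z^{(N)}$. So neither non-explosion of $Z$ nor uniform tightness of $Z^{(N)}_{Nt}$ follows from ``rates grow at most linearly''.

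The missing observation is that coalescence dominates pairwise branching. Take $V(n)=n$ and $V(\Delta)=0$. The neutral part of $N\Ls_{Z^{(N)}}V(n)$ is $\kappa\binom{n}{2}\frac{N-n}{N}-\binom{n}{2}\bigl(r+\kappa\frac{N-n+1}{N}\bigr)=-\binom{n}{2}\bigl(r+\frac{\kappa}{N}\bigr)\le 0$. Mutation contributes non-positively, and selection contributes at most $n\sum_m m\sigma_m$. Hence $N\Ls_{Z^{(N)}}V\le cV$ uniformly in $N$, and likewise $\Ls_Z V\le cV$.

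The bound for $\Ls_Z$ gives non-explosion of $Z$ by the standard Lyapunov criterion; the paper just cites \cite[Theorem~1]{GPP21}. The bound for $N\Ls_{Z^{(N)}}$ gives $\Eb\bigl[Z^{(N)}_{Nt};\,Z^{(N)}_{Nt}\neq\Delta\bigr]\le n\,e^{ct}$, and hence tightness via Markov's inequality. Alternatively, tightness follows directly from $Z^{(N)}_{Nt}\Rightarrow Z_t$ once $Z_t$ is known to be a.s.\ finite. With these two repairs the rest of your argument goes through.
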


The corollary is closely related to \cite[Theorem 2]{GPP21}, and for $\r=1-\kappa$, $\theta=0$, and $\sigma_m=0$ for all $m\geq 2$, it reduces to \cite[Lemma 1]{GMP20}. 

\section{Natural ancestry}\label{sec:natural_ancestry}
This section investigates the natural ancestry of the MoMo. Starting from the graphical representation of the model, we define its ancestral influence graph. However, the line-counting process of this graph differs from the factorial moment dual. As an alternative approach we introduce the concept of compatible configurations and establish a pre-duality. A detailed analysis of the backward type propagation motivates the construction of the configuration process.

\subsection{Graphical representation}\label{sec:graphical}

\begin{figure}[htbp]
    \centering
    \scalebox{0.7}{
    \begin{tikzpicture}
		\draw[dashed] (0,-0.5) --(0,5.5);
		\node [anchor = west] at (0,-0.5) {$0$}; 

        \draw[->, semithick] (5.25,-0.8) -- (9.25,-0.8); 
        \node at (7.25, -1){\large $t$};

        \draw[semithick] (0,5) -- (14.5,5); 
        \draw[semithick] (0,4) -- (14.5,4); 
        \draw[semithick] (0,3) -- (14.5,3);
        \draw[semithick] (0,2) -- (14.5,2); 
        \draw[semithick] (0,1) -- (14.5,1); 
        \draw[semithick] (0,0) -- (14.5,0);  
        
        \node[left] at (-0.2,5) {\large $1$}; 
        \node[left] at (-0.2,3.5) {\scalebox{0.8}{$\bullet$}}; 
		\node[left] at (-0.2, 2.5) {\scalebox{0.8}{$\bullet$}};   
		\node[left] at (-0.2,1.5) {\scalebox{0.8}{$\bullet$}};
		\node[left] at (-0.2,0) {\large $N$};   
		 
       
		\draw[-{triangle 45[scale=50]},semithick] (1.5,3) -- (1.5,1);
		
		\draw[-{open triangle 45[scale=5]},semithick] (2.9,0) -- (2.9,5);

        
        \def\x{5} 

        \coordinate (a) at (\x,4);
        \coordinate (b) at (\x,3);
        \coordinate (c) at (\x,0);

        \coordinate (target) at (\x,1);

        \draw[-{open triangle 45[scale=5]},semithick] (a) .. controls (\x+1, 2.5) .. (target);
        \draw[-{open triangle 45[scale=5]},semithick] (b) -- (target);
        \draw[-{open triangle 45[scale=5]},semithick] (c) -- (target);

        \draw (a) node[shape=rectangle, draw, fill=white!100, inner sep=2pt, yshift=0.3ex] {\normalsize $1$};
        \draw (b) node[shape=rectangle, draw, fill=white!100, inner sep=2pt, yshift=0.3ex] {\normalsize $3$};
        \draw (c) node[shape=rectangle, draw, fill=white!100, inner sep=2pt, yshift=0.3ex] {\normalsize $2$};


        \draw[-{triangle 45[scale=5]},semithick] (7.5,4) -- (7.5,3); 
        \draw[-{Diamond[open, scale = 1.3]}, semithick, dashed] (7.5,4) -- (7.5,5); 

        \draw[-{triangle 45[scale=5]},semithick] (8.5,0) -- (8.5,1);

		\draw[-{triangle 45[scale=5]}, semithick] (10.4,2) -- (10.4,4); 
		\draw[-{Diamond[open, scale = 1.3]}, semithick, dashed] (10.4,2) -- (10.4,1); 

        
        \def\x{13} 

        \coordinate (a) at (\x,5);
        \coordinate (b) at (\x,2);

        \coordinate (target) at (\x,4);

        \draw[-{open triangle 45[scale=5]},semithick] (a) -- (target);
        \draw[-{open triangle 45[scale=5]},semithick] (b) -- (target);

        \draw (a) node[shape=rectangle, draw, fill=white!100, inner sep=2pt, yshift=0.3ex] {\normalsize $2$};
        \draw (b) node[shape=rectangle, draw, fill=white!100, inner sep=2pt, yshift=0.3ex] {\normalsize $1$};
       

        \draw (6,3) [] circle (1mm)[fill=white!100];
        \draw (12,2) [] circle (1mm)[fill=white!100];

        \node[] at (2,4) {{$\times$}};
        \node[] at (8,0) {{$\times$}};
        
    \end{tikzpicture}}
    \caption{An untyped realization of the graphical representation of the Moran model with interactive neutral reproduction, fittest-type-wins selection and mutation, $t$ denotes the forward time increment.}
    \label{fig.MoMo}
\end{figure}

The Moran model admits a graphical representation (GR) as an interactive particle system, in the spirit of Harris \cite{harr78}. The $N$ individuals are represented by horizontal lines, labeled $1$ to $N$, along which time runs from left to right; see Figure~\ref{fig.MoMo}. The GR decouples the randomness inherent in the initial type configuration from the randomness generated by reproduction and mutation events. Because certain reproduction rates depend on the types of the individuals involved, each such event is interpreted as a superposition of two components: an attempted reproduction and, subsequently, a type-dependent decision determining whether the attempt is successful. In the first stage, we work with an untyped GR that contains all reproduction attempts and mutation marks, ignoring any information about the initial types of the lines; see Figure~\ref{fig.MoMo}. We incorporate types into the backward picture in Section \ref{subsec:tracing_back}.

Reproduction attempts are represented by arrows of different styles. We distinguish \emph{reproduction arrows} and \emph{checking arrows}. When a reproduction event is successful, we say that the reproduction arrow is \emph{used.} The tip of a reproduction arrow indicates the individual that will be replaced by the offspring of the individual at the tail (the offspring inherits the type of its parent) provided the attempt is successful. Among the reproduction arrows, we use solid-headed ($\blacktriangle$) arrows to represent neutral events and hollow-headed ($\vartriangle$) arrows to represent selective events. Checking arrows are depicted as dashed arrows with a diamond head ($\varDiamond$). Their tip indicates the individual whose type determines whether the tail individual produces an offspring in an interactive neutral event. 

Non-interactive neutral reproduction attempts are, by definition, always successful, regardless of the types of the involved lines; they occur at rate $\r/(2N)$ per ordered pair of lines.

Interactive neutral reproduction events are depicted by a neutral reproduction arrow and a checking arrow sharing the same tail; such pairs of arrows occur at rate $\kappa/(2N^2)$ per ordered triplet of lines. The reproduction attempt is successful if the checking arrow targets a fit individual.

FTW events of order $j$ (which here means without replacement) are depicted by $j$ selective arrows sharing a common tip; the individual at the tip is said to initiate the event. If all tail individuals are unfit, the reproduction attempt fails; otherwise, one of the fit tail individuals (chosen uniformly at random) succeeds. Each individual initiates FTW events of order $j$ at rate $\tilde s_j$.

Mutations are marked by symbols on the lines: beneficial mutations ($\circ$) occur at rate $u\nu_0$ per line, and deleterious mutations ($\times$) at rate $u\nu_1$.

Given an initial configuration of types and the untyped GR, types are propagated forward in time, thereby resolving each reproduction attempt according to the rules of the model, see Figure~\ref{fig.MoMo_typed}.
\begin{figure}[htbp]
    \centering
    \scalebox{0.7}{
    \begin{tikzpicture}

		\draw[dashed] (0,-0.5) --(0,5.5); 

		\node [anchor = west] at (0,-0.5) {$0$}; 
        
        \draw[->, semithick] (5.25,-0.8) -- (9.25,-0.8);  
        \node at (7.25, -1){\large $t$};
        
        \node[left] at (-0.2,5) {\large $1$};
		\node[left] at (-0.2,0) {\large $N$};   
		\node[left] at (-0.2,3.5) {\scalebox{0.8}{$\bullet$}};  
		\node[left] at (-0.2,1.5) {\scalebox{0.8}{$\bullet$}}; 
		\node[left] at (-0.2, 2.5) {\scalebox{0.8}{$\bullet$}};


        \fill [BrickRed] (-.15,4.85) rectangle (0.15,5.15);
        \fill [BrickRed] (-.15,3.85) rectangle (0.15,4.15);
        \fill [BrickRed] (-.15,2.85) rectangle (0.15,3.15);
        \fill [SkyBlue] (-.15,1.85) rectangle (0.15,2.15);
        \fill [SkyBlue] (-.15,0.85) rectangle (0.15,1.15);
		\fill [SkyBlue] (-.15,-.15) rectangle (0.15,0.15);

		\draw[BrickRed,line width=0.9mm] (0,5)--(2.9,5);
		\draw[SkyBlue,line width=0.9mm] (2.9,5)--(14.5,5);

        \draw[BrickRed,line width=0.9mm] (0,4)--(13,4);
        \draw[SkyBlue, line width=0.9mm] (13,4)--(14.5,4);

        \draw[BrickRed,line width=0.9mm] (0,3)--(6,3);
		\draw[SkyBlue,line width=0.9mm] (6,3)--(7.5,3);
        \draw[BrickRed,line width=0.9mm] (7.5,3)--(14.5,3);

        \draw[SkyBlue,line width=0.9mm] (0,2)--(14.5,2);

        \draw[SkyBlue,line width=0.9mm] (0,1)--(1.5,1);
		\draw[BrickRed,line width=0.9mm] (1.5,1)--(5,1);
        \draw[SkyBlue,line width=0.9mm] (5,1) --(8.5,1);
        \draw[BrickRed,line width=0.9mm] (8.5,1)--(14.5,1);

		\draw[SkyBlue,line width=0.9mm] (0,0)--(8,0);
		\draw[BrickRed,line width=0.9mm] (8,0)--(14.5,0);

        
		\draw[-{triangle 45[scale=50]},semithick] (1.5,3) -- (1.5,1);
		
		\draw[-{open triangle 45[scale=5]},semithick] (2.9,0) -- (2.9,5);

        
        \def\x{5} 

        \coordinate (a) at (\x,4);
        \coordinate (b) at (\x,3);
        \coordinate (c) at (\x,0);

        \coordinate (target) at (\x,1);

        \draw[-{open triangle 45[scale=5]},semithick] (a) .. controls (\x+1, 2.5) .. (target);
        \draw[-{open triangle 45[scale=5]},semithick] (b) -- (target);
        \draw[-{open triangle 45[scale=5]},semithick] (c) -- (target);

        \draw (a) node[shape=rectangle, draw, fill=white!100, inner sep=2pt, yshift=0.3ex] {\normalsize $1$};
        \draw (b) node[shape=rectangle, draw, fill=white!100, inner sep=2pt, yshift=0.3ex] {\normalsize $3$};
        \draw (c) node[shape=rectangle, draw, fill=white!100, inner sep=2pt, yshift=0.3ex] {\normalsize $2$};


        \draw[-{triangle 45[scale=5]},semithick] (7.5,4) -- (7.5,3); 
        \draw[-{Diamond[open, scale = 1.3]}, semithick, dashed] (7.5,4) -- (7.5,5); 

        \draw[-{triangle 45[scale=5]},semithick] (8.5,0) -- (8.5,1);

		\draw[-{triangle 45[scale=5]}, semithick] (10.4,2) -- (10.4,4); 
		\draw[-{Diamond[open, scale = 1.3]}, semithick, dashed] (10.4,2) -- (10.4,1); 

        
        \def\x{13} 

        \coordinate (a) at (\x,5);
        \coordinate (b) at (\x,2);

        \coordinate (target) at (\x,4);

        \draw[-{open triangle 45[scale=5]},semithick] (a) -- (target);
        \draw[-{open triangle 45[scale=5]},semithick] (b) -- (target);

        \draw (a) node[shape=rectangle, draw, fill=white!100, inner sep=2pt, yshift=0.3ex] {\normalsize $2$};
        \draw (b) node[shape=rectangle, draw, fill=white!100, inner sep=2pt, yshift=0.3ex] {\normalsize $1$};
       

        \draw (6,3)[] circle (1mm)  [fill=white!100];
        \draw (12,2)[] circle (1mm)  [fill=white!100];

        \node[] at (2,4) {{$\times$}};
        \node[] at (8,0) {{$\times$}};		
    \end{tikzpicture}}
    \caption{The typed version of the GR in Figure~\ref{fig.MoMo}, with the unfit type in \textcolor{BrickRed}{red} and the fit in \textcolor{SkyBlue}{blue}.}
    \label{fig.MoMo_typed}
\end{figure}

The graphical representation can be formalized as follows.

\begin{definition}\label{def.P}
Consider the set of graphical elements
\begin{equation*}
    M \coloneqq \Big\{ \narrowab, \sarrowab, \carrowcba, \circa, \timesa: \alpha, \beta, \gamma \in [N], \beta \neq \alpha , \{\alpha\} \neq \Bs  \subseteq[N]\Big\}.
\end{equation*}
Here, $\narrowab$ denotes a neutral arrow from line $\beta$ to line $\alpha\neq \beta$, and $\sarrowab$ represents a set of selective arrows with shared tip at line $\alpha$ and tails at lines $\beta \in \mathcal{B}$. The conditions $\beta \neq \alpha$ and $\Bs \neq \{\alpha\} $, respectively, correspond to our convention of neglecting silent reproduction events.
 The symbol $\carrowcba$ stands for an interactive neutral event with the tip of the checking arrow at line $\gamma$, the tail of the neutral arrow at line $\beta$, and its tip at line $\alpha\neq \beta$. Note that, for all events, line $\alpha$ is initiating, as indicated by the arrows pointing to the left. Finally, $\circa$ (resp.\ $\timesa$) indicates a beneficial (resp.\ deleterious) mutation on line $\alpha$. 

The graphical representation of the Moran model is encoded by the Poisson point process $\Ps$ on $\Rb \times M$ with intensity measure $\dd t\times \gamma_M(\dd m)$, where
\begin{equation}\label{eq.Poisson_measure}
\gamma_M=\frac{\r}{2 N}\sum_{\alpha,\beta}\delta_{\snarrowab}^{}+\frac{\kappa}{2N^{2}}\sum_{\alpha,\beta, \gamma}\delta_{\scarrowcba}^{}+\sum_{\alpha,\mathcal{B}}\frac{\tilde s_{\vert \mathcal{B}\vert}}{ {N \choose \lvert \Bs \rvert}}\delta_{\ssarrowab}+ u\sum_{\alpha}(\nu_0\delta_{\scirca}+\nu_1\delta_{\stimesa}),
\end{equation}
the summation indices are as defined above,
and $\delta_m$ denotes the Dirac mass at $m$. For every $m \in M$, we denote by $\Pi(m)=\Pi m$ the Poisson point process on $\mathbb{R}$ obtained by projecting the restriction of $\Ps$ to $\mathbb{R} \times \{m\}$ onto its first coordinate. The process $\Pi m$ records the events of type $m$. We further denote by $\mathcal{T}$ the Poisson point process obtained by projecting $\Ps$ onto its first component.
\end{definition}

\subsection{Ancestral Influence Graph}\label{sec:AIG}
As mentioned in the introduction, the ASG, which uses the untyped GR to trace back all potential ancestors of a sample taken at a fixed forward time $\tau \geq 0$, does not suffice in our setting. We rather need the more general AIG, which traces back all potential influencers, that is, all individuals that may influence the sample’s types when mutations are ignored, and also keeps track of all graphical elements that affect the lines in the graph.

At backward time $0$, the AIG starts with the set $g\subseteq [N]$ of individuals in the sample. At any backward time $r = \tau - t$, it consists of a subset of lines $\mathcal{G}_r^\tau \subseteq [N]$ and possibly a graphical element $m \in M$ (whenever $\tau - r \in \Pi m$). The set of lines is updated whenever one of the lines in $\mathcal{G}_{r-}^\tau$ is hit by the head of a reproductive arrow. Mutations do not affect the set of lines, but we record them as marks ($\times$ and $\circ$) decorating the lines of the AIG. We now describe how the set of lines is updated depending on the type of reproductive arrow hitting a line currently present in the AIG.

If a line in the AIG is hit by the tip of a reproductive arrow in a non-interactive neutral event, its type is fully determined by the line at the tail. If the latter already belongs to the AIG, we merge the two lines and continue tracing back only the line at the tail. If the line at the tail does not belong to the AIG, we relocate the line at the tip to the position of the tail; see Figure~\ref{fig.ASG_coal}.

\begin{figure}[htbp]
\begin{subfigure}[t]{0.485\textwidth}
\centering
\scalebox{0.8}{
 \begin{tikzpicture}
            \draw[semithick] (3.5,2) -- (6,2); 
            \draw[semithick] (0.2,1) -- (3.5,1);
            \draw[semithick, dotted] (3.5,1) -- (6,1); 
            \draw[-{triangle 45[scale=5]}, semithick] (3.5,1) -- (3.5,2);
            \node[anchor=east] at (3.5,1.3) {parent};
            \node[anchor=west] at (3.5,2.3) {descendant};  
    \end{tikzpicture}}
    \caption{Coalescence and relocation}
    \label{fig.ASG_coal}
\end{subfigure}
\hfill
\begin{subfigure}[t]{0.485\textwidth}
    \centering
        \scalebox{0.8}{
        \begin{tikzpicture}
            \draw[semithick] (0.2,2) -- (6,2); 
            \draw[semithick] (0.2,1) -- (3.5,1); 
            \draw[semithick, dotted] (3.5,1)-- (6,1); 
            \draw[-{open triangle 45[scale=5]}, semithick] (3.5,1) -- (3.5,2);
            \node[anchor=east] at (0.2,2) {continuing};
            \node[anchor=east] at (0.2,1) {incoming};
            \node[anchor=east] at (3.5,2.3) {potential parent};
            \node[anchor=east] at (3.5,1.3) {potential parent};
            \node[anchor=west] at (3.5,2.3) {descendant};
        \end{tikzpicture}}
\caption{Selective branching}
\label{fig.notation_genic}
\end{subfigure}
\\[15pt]
\centering
\begin{subfigure}[t]{0.485\textwidth}
    \centering
        \scalebox{0.8}{
        \begin{tikzpicture}
            \draw[semithick] (0.2,2) -- (6,2); 
            \draw[semithick] (0.2,1) -- (3.5,1); 
            \draw[semithick, dotted] (3.5,1)-- (6,1); 
            \draw[semithick] (0.2,0) -- (3.5,0); 
            \draw[semithick, dotted] (3.5,0)-- (6,0); 
            \draw[-{triangle 45[scale=5]}, semithick] (3.5,1) -- (3.5,2);
            \draw[-{Diamond[open, scale = 1.3]}, semithick, dashed] (3.5,1) -- (3.5,0);
            \node[anchor=east] at (0.2,2) {continuing};
            \node[anchor=east] at (0.2,1) {incoming};
            \node[anchor=east] at (0.2,0) {checking};
            \node[anchor=east] at (3.5,2.3) {potential parent};
            \node[anchor=east] at (3.5,1.3) {potential parent};
            \node[anchor=east] at (3.5,0.3) {potential influencer};
            \node[anchor=west] at (3.5,2.3) {descendant};
        \end{tikzpicture}}
    \caption{Interactive branching}
    \label{fig.notation_checking}
\end{subfigure}
\caption{Notation and roles of lines in the different events. Dotted lines indicate lines that may or may not be present in the AIG before the event. Backward time runs from right to left.}
\end{figure}

If a line in the AIG is hit by a set $\mathcal B$ of selective arrows, the lines at their tails may influence its type. Consequently, we add all tail lines of these selective arrows to the AIG (if they are not already present). We refer to the tail lines as \emph{incoming}, while the line at the tip is called \emph{continuing} at time $r$ and \emph{descendant} at time $r-$; see Figure~\ref{fig.notation_genic}. Once types are assigned, the parent of the descendant line is the continuing line if all incoming lines are unfit; otherwise, it is chosen uniformly at random from the fit lines.

If a line in the AIG is hit by the tip of the reproductive arrow in an interactive neutral event, the lines at the tail of the reproduction arrow and at the tip of the checking arrow may influence the type of the line that is hit and must be added to the AIG if not already present. Thus, in addition to the continuing and incoming lines, there is now also a checking line; see Figure~\ref{fig.notation_checking}. This interactive branching event may result in ternary branching (if neither the incoming nor the checking line was previously in the AIG, see Figure~\ref{ternary}), pairwise branching (if exactly one of them was already present, see Figures~\ref{pairwise1} and~\ref{pairwise2}), binary branching (if the checking line coincides with either the incoming or the continuing line and the incoming line was not in the AIG, see Figure~\ref{binary1} and~\ref{binary2}), or a collision (if both were already in the AIG, see Figure~\ref{collision}). Recall that if the checking line is unfit (is fit), then the continuing (the incoming) line is the true parent. 
See Figure~\ref{fig.MoMo_backwards} for an illustration of how to construct the AIG from the GR of the Moran model.

\begin{figure}[htbp]
\newlength{\tikzheight} 
\setlength{\tikzheight}{2.5cm}  
\begin{subfigure}[t]{0.15\textwidth}  
    \centering
    \begin{minipage}[t][\tikzheight][t]{\linewidth}
        \centering
        \begin{tikzpicture}[baseline=(current bounding box.north)]  
            \draw[semithick] (0.4,2) -- (2.25,2); 
            \draw[semithick] (0.4,1) -- (1.25,1); 
            \draw[semithick] (0.4,0) -- (1.25,0); 
            \draw[-{triangle 45[scale=5]}, semithick] (1.25,1) -- (1.25,2); 
            \draw[-{Diamond[open, scale =1.3]}, semithick, dashed] (1.25,1)-- (1.25,0); 
        \end{tikzpicture}
    \end{minipage}
\caption{}
\label{ternary}
\end{subfigure}
\begin{subfigure}[t]{0.15\textwidth}  
    \centering
    \begin{minipage}[t][\tikzheight][t]{\linewidth}
        \centering
        \begin{tikzpicture}[baseline=(current bounding box.north)]  

            \draw[semithick] (0.4,2) -- (2.25,2); 
            \draw[semithick] (0.4,1) -- (2.25,1); 
            \draw[semithick] (0.4,0) -- (1.25,0); 
            
            \draw[-{triangle 45[scale=5]}, semithick] (1.25,1) -- (1.25,2); 
            \draw[-{Diamond[open, scale =1.3]}, semithick, dashed] (1.25,1)-- (1.25,0); 
        \end{tikzpicture}
    \end{minipage}
\caption{}
\label{pairwise1}
\end{subfigure}
\hfill
\begin{subfigure}[t]{0.15\textwidth}
    \centering
    \begin{minipage}[t][\tikzheight][t]{\linewidth}
        \centering
        \begin{tikzpicture}[baseline=(current bounding box.north)]
            
            \draw[semithick] (0.4,2) -- (2.25,2); 
            \draw[semithick] (0.4,1) -- (1.25,1); 
            \draw[semithick] (0.4,0) -- (2.25,0); 
            \draw[-{triangle 45[scale=5]}, semithick] (1.25,1) -- (1.25,2); 
            \draw[-{Diamond[open, scale = 1.3]}, semithick, dashed ] (1.25,1) -- (1.25,0); 
        \end{tikzpicture}
    \end{minipage}
\caption{}
\label{pairwise2}
\end{subfigure}
\hfill
\begin{subfigure}[t]{0.15\textwidth}
    \centering
    \begin{minipage}[t][\tikzheight][t]{\linewidth}
        \centering
        \begin{tikzpicture}[baseline=(current bounding box.north)]
            \draw[semithick] (0.4,2) -- (2.25,2); 
            \draw[semithick] (0.4,1) -- (1.25,1); 
            \draw[-{Triangle[scale=1.5]}, semithick] (1.25,1) -- (1.25,2); 
            \draw[-{Diamond[open, scale = 1.3]}, semithick, dashed,] (1.25,1)  to[out=45, in=315,looseness=30] (1.25,0.9); 
        \end{tikzpicture}
    \end{minipage}
\caption{}
\label{binary1}
\end{subfigure}
\hfill
\begin{subfigure}[t]{0.15\textwidth}
    \centering
    \begin{minipage}[t][\tikzheight][t]{\linewidth}
        \centering
        \begin{tikzpicture}[baseline=(current bounding box.north)]
        \draw[semithick] (0.4,2) -- (2.25,2); 
        \draw[semithick] (0.4,1) -- (1.25,1); 
        \draw[-{triangle 45[scale=5]}, semithick] (1.25,1) -- (1.25,2); 
        \draw[-{Diamond[open, scale = 1.3]}, semithick, dashed, shorten >= 2pt] (1.25,1) .. controls (1.7,1.2) and (1.7,1.8) .. (1.25,2); 
        \end{tikzpicture}
    \end{minipage}
\caption{}
\label{binary2}
\end{subfigure}
\hfill
\begin{subfigure}[t]{0.15\textwidth}  
    \centering
    \begin{minipage}[t][\tikzheight][t]{\linewidth}
        \centering
        \begin{tikzpicture}[baseline=(current bounding box.north)]  
            \draw[semithick] (0.4,2) -- (2.25,2); 
            \draw[semithick] (0.4,1) -- (2.25,1); 
            \draw[semithick] (0.4,0) -- (2.25,0); 
            \draw[-{triangle 45[scale=5]}, semithick] (1.25,1) -- (1.25,2); 
            \draw[-{Diamond[open, scale =1.3]}, semithick, dashed] (1.25,1)-- (1.25,0); 
        \end{tikzpicture}
    \end{minipage}
\caption{}
\label{collision}
\end{subfigure}
\caption{Different AIG cutouts that may arise from a ternary (\ref{ternary}), pairwise (\ref{pairwise1},~\ref{pairwise2}) or binary (\ref{binary1},~\ref{binary2}) branching event or a collision (\ref{collision}), induced by an interactive branching event.}
\label{fig.transitions} 
\end{figure}

We now provide a more formal definition of the ancestral influence graph on the basis of the time-reversal of the Poisson point process $\Ps$, which we denote by its mirror image $\sP$. In this construction, the first component of the AIG is the set of lines of potential influencers, and its second component consists of the relevant elements of the process $\sP$, which determine the roles of the lines in the various events and are required for the forward propagation of types.
\begin{definition}[Ancestral influence graph]\label{def.AIG}
Consider the processes $\Ps$, $(\Pi m)_{m\in M}$, and $\Ts$ from Def.~\ref{def.P}. Fix $\tau\in \Rb$ and $\gs_0\subseteq [N]$. Let $\sP$ and $\sT$ be the time-reversals of $\Ps$ and $\Ts$, obtained by the time reversal $t \to \tau-t$. The process $\Gs = (\mathcal{G}_r^\tau)_{r\geq 0}$ starts at $\Gs_0=\gs_0$ and is recursively updated at the arrival times $\rho \in \sT \cap \Rb_+$ as follows.

\begin{enumerate}
\item If $\rho \in \Pi\! \narrowab $ for some $\alpha \in \mathcal{G}_{\rho-}^{\tau}$, $\beta \in [N]\setminus \{\alpha\}$, then \[\mathcal{G}_{\rho}^\tau=(\mathcal{G}_{\rho-}^\tau\setminus \{\alpha\}) \cup \{\beta\}.\] 
If $\beta \in \mathcal{G}_{\rho-}^{\tau}$, we speak of a coalescence event; otherwise, we speak of a relocation.
\item If $\rho \in \Pi\! \sarrowab $ for some $\alpha \in \mathcal{G}_{\rho-}^\tau$ and $\{\alpha\}\neq \mathcal{B}\subseteq [N]$, then 
\[\mathcal{G}_{\rho}^\tau=\mathcal{G}_{\rho-}^\tau \cup \Bs;\] 
we call this a selective branching event of order $\vert \mathcal{B}\setminus \mathcal{G}_{\rho-}^{\tau} \vert$.
\item If $\rho \in \Pi \carrowcba$ for some $\alpha \in \mathcal{G}_{\rho-}^\tau$ and $\beta \in [N]\setminus\{\alpha\}, \gamma \in [N]$, then \[\mathcal{G}_{\rho}^\tau=\mathcal{G}_{\rho-}^\tau \cup \{\beta,\gamma\}.\] We call this an interactive branching event. It results in ternary branching if $\{\beta,\gamma\}\cap \mathcal{G}_{\rho-}^\tau=\varnothing$, in pairwise branching if $\lvert \{\beta,\gamma\}\cap \mathcal{G}_{\rho-}^\tau\rvert =1$, in binary branching if $\beta \notin \mathcal{G}_{\rho-}^\tau$ and $\gamma \in \{\alpha,\beta\}$, and in a collision if $\{\alpha, \beta, \gamma \}\in \mathcal{G}_{\rho-}^\tau$.
\end{enumerate}

We refer to the set $\gs_0$ as the initial sample at (forward) time $\tau$, and to its elements as \emph{roots}. The lines of $\mathcal{G}_r^{\tau}$ present at time $r$ are called \emph{leaves}.
The \emph{ancestral influence graph (AIG)} with initial sample $\gs_0$ taken at time $\tau$ is then defined as
\[
\As^\tau=\As^\tau(\gs_0) \coloneqq \big(\Gs^\tau(\gs_0), \sP(\gs_0)\big),
\]
where $\sP(\gs_0)$ consists of the points $(r, m)$ of $\sP$ for which $m$ is either a mutation mark on a line in $\mathcal{G}_r^\tau(\gs_0)$ or encodes a reproduction event whose reproduction arrow has its tip in $\mathcal{G}_r^\tau(\gs_0)$. We write $\As^\tau_{[0,r]}$ for the restriction of $\As^\tau$ to $[\tau-r,\tau]$. 
\end{definition}

The true ancestry on the interval $[\tau-t, \tau]$ is determined by assigning types to the active lines at backward time $t$ and propagating them forward in time according to the rules of the Moran model. In doing so, the interactive neutral events and the selective events are resolved (in the sense that the true parent and the type of the descendant are determined), and the type of the sample is established. We will write $\As$ and $\Gs$ instead of $\As^\tau$ and $\Gs^\tau$, as their distributions do not depend on $\tau$.

\begin{figure}[htbp]
    \centering
    \scalebox{0.7}{
    \begin{tikzpicture}

		\draw[dashed] (0,-0.5) --(0,5.5);
		\draw[dashed] (14.5,-0.5) --(14.5,5.5); 

        \draw[->, semithick] (5.25,-0.8) -- (9.25,-0.8); 
        \node at (7.25, -1){\large forward time}; 

        \node [anchor = west] at (0,-0.5) {$0$}; 
		\node [anchor = west] at (14.5,-0.5) {\large $\tau$};

        \draw[->, semithick, mygreen] (9.25,5.7) -- (5.25,5.7);
        \node at (7.25, 5.9) [text=mygreen]{\large backward time}; 

        \node [anchor = west, mygreen] at (14.5,5.7) {\large $0$}; 
		\node [anchor = west, mygreen] at (0,5.7) {\large $\tau$};

		\node[left] at (-0.2,0) {\large $N$};   
		\node[left] at (-0.2,5) {\large $1$};  
		\node[left] at (-0.2,1.5) {\scalebox{0.8}{$\bullet$}}; 
		\node[left] at (-0.2, 2.5) {\scalebox{0.8}{$\bullet$}};   
		\node[left] at (-0.2,3.5) {\scalebox{0.8}{$\bullet$}};
        
        \draw[mygreen,opacity=1,line width=1.3mm] (0,5)--(13.065,5);
        \draw[mygreen,opacity=1,line width=1.3mm] (0,4)--(14.5,4);
        \draw[mygreen,opacity=1,line width=1.3mm] (0,2)--(13.065,2);
        \draw[mygreen,opacity=1,line width=1.3mm] (8.435,1)--(10.465,1);
        \draw[mygreen,opacity=1,line width=1.3mm] (0,0)--(8.565,0);

        \draw[mygreen,opacity=1,line width=1.3mm] (13,2)--(13,5);
        \draw[mygreen,opacity=1,line width=1.3mm] (10.4,1)--(10.4,4);
        \draw[mygreen,opacity=1,line width=1.3mm] (8.5,0)--(8.5,1);
        \draw[mygreen,opacity=1,line width=1.3mm] (2.9,0)--(2.9,5);

        \draw[semithick] (0,5) -- (14.5,5); 
        \draw[semithick] (0,4) -- (14.5,4); 
        \draw[semithick] (0,3) -- (14.5,3); 
        \draw[semithick] (0,2) -- (14.5,2); 
        \draw[semithick] (0,1) -- (14.5,1); 
        \draw[semithick] (0,0) -- (14.5,0);  
        
        \fill [mygreen, opacity=1] (14.35,3.85) rectangle (14.65,4.15);

        \fill [mygreen] (-.15,4.85) rectangle (.15,5.15);
        \fill [mygreen] (-.15,3.85) rectangle (.15,4.15);
        \fill [mygreen] (-.15,1.85) rectangle (.15,2.15);
        \fill [mygreen] (-.15,-.15) rectangle (.15,.15);

        
		\draw[-{triangle 45[scale=50]},semithick] (1.5,3) -- (1.5,1);
		
		\draw[-{open triangle 45[scale=5]},semithick] (2.9,0) -- (2.9,5);

        
        \def\x{5} 

        \coordinate (a) at (\x,4);
        \coordinate (b) at (\x,3);
        \coordinate (c) at (\x,0);

        \coordinate (target) at (\x,1);

        \draw[-{open triangle 45[scale=5]},semithick] (a) .. controls (\x+1, 2.5) .. (target);
        \draw[-{open triangle 45[scale=5]},semithick] (b) -- (target);
        \draw[-{open triangle 45[scale=5]},semithick] (c) -- (target);


        \draw[-{triangle 45[scale=5]},semithick] (7.5,4) -- (7.5,3); 
        \draw[-{Diamond[open, scale = 1.3]}, semithick, dashed] (7.5,4) -- (7.5,5); 

        \draw[-{triangle 45[scale=5]},semithick] (8.5,0) -- (8.5,1);
	
		\draw[-{triangle 45[scale=5]}, semithick] (10.4,2) -- (10.4,4); 
		\draw[-{Diamond[open, scale = 1.3]}, semithick, dashed] (10.4,2) -- (10.4,1); 

        
        \def\x{13} 

        \coordinate (a) at (\x,5);
        \coordinate (b) at (\x,2);

        \coordinate (target) at (\x,4);

        \draw[-{open triangle 45[scale=5]},semithick] (a) -- (target);
        \draw[-{open triangle 45[scale=5]},semithick] (b) -- (target);

        \draw (a) node[shape=rectangle, draw, fill=white!100, inner sep=2pt, yshift=0.3ex] {\normalsize $2$};
        \draw (b) node[shape=rectangle, draw, fill=white!100, inner sep=2pt, yshift=0.3ex] {\normalsize $1$};
       
        
        \draw (6,3)[] circle (1mm)  [fill=white!100];
        \draw (12,2)[] circle (1mm)  [fill=white!100];

        \node[] at (2,4) {\scalebox{1.5}{$\times$}};
        \node[] at (8,0) {\scalebox{1.5}{$\times$}};
		
    \end{tikzpicture}}
    \caption{The AIG induced by the untyped graphical representation from Figure~\ref{fig.MoMo} for a sample $\gs_0$ of size $1$ (the green square at the right). The green arrows and the mutation marks on green lines are elements of $\sP(\gs_0)$.}
    \label{fig.MoMo_backwards}
\end{figure}
From the above construction and the dynamics of  $\sP$, it follows that the line-counting process of the AIG is a continuous-time Markov chain on $[N]$ with transitions 
    \begin{equation*}
        n \to  \begin{cases}
        n+j & \text{ at rate } n   \sum_{m=j}^{\infty}  s_m \, p_{mj}^n \text{ for } j\geq 3,\\[5pt]
          n+2 & \text{ at rate } \frac{\kappa}{N} \frac{n}{N}  \binom{N-n}{2} + n \sum_{m=2}^{\infty}  s_m \, p_{m2}^n,\\[5pt]
          n+1 & \text{ at rate } 2 \frac{\kappa}{N} \binom{n}{2} \frac{N-n}{N} + \frac{\kappa}{N} n  \frac{N-n}{N} + n 
          \sum_{m=1}^{\infty}  s_m \, p_{m1}^n,\\[5pt]
          n-1 & \text{ at rate } \frac{r}{N} \binom{n}{2}, 
        \end{cases}    
    \end{equation*}
with the probabilities $p_{mj}^n$ as in Def.~\ref{def.factorial_dual}.

In the absence of mutations and interactive neutral events, the line-counting process has the same transition rates and hence the same law as the process $Z^{(N)}$ in Theorem~\ref{thm.factorial_duality}. When mutations are included, $Z^{(N)}$ has additional transitions that result from a dynamical pruning of the lines upon mutation events and turn the ancestral graph into the \emph{killed ancestral graph} with line-counting process $Z^{(N)}$ \cite[Theorem 2.3]{BEH23}. The interactive neutral events, however, stand out: they introduce additional coalescence events into $Z^{(N)}$ that are not contained in the line-counting process, whereas the ternary and binary branchings they cause in the line-counting process are absent in $Z^{(N)}$. This indicates that the two concepts do not fit together without further tuning.

\subsection{Pre-duality}\label{subsec:pre-duality}
As discussed at the end of the previous section, the AIG does not explain the factorial duality. We therefore adopt a different approach, which will, informally, consist of tracing back sets of type configurations that are \emph{compatible}, in a sense to be made precise, with sampling only unfit individuals at time $\tau$, given the ancestral structure encoded by the AIG. To this end, we first introduce some notation. Recall from Figure~\ref{fig.MoMo_typed} that \emph{red} ($\mathrm{r}$) refers to unfit individuals, whereas \emph{blue} ($\mathrm{b}$) refers to fit ones. From this point onward, we write $R = \{\mathrm{r}\}$ and $B = \{\mathrm{b}\}$, and use $\ast = \{\mathrm{r},\mathrm{b}\} = R \cup B$.

\begin{definition}[Cylinders and configurations]
A cylinder $C$ is a set of the form
\[
C = C_I = \underset{i \in I}{\bigtimes} C_i
\]
for some $I \subseteq \Nb$, such that, for all $i \in I$, we have $C_i \in \{R, B, \ast, \varnothing\}$. We imply that $C_I=\varnothing$ if $C_i=\varnothing$ for some $i\in I$. 
If $C_i \in \{R, \ast\}$ for all $i \in I$, we speak of an \emph{$R$-cylinder}. We write $R_I$ for the cylinder $\bigtimes_{i\in I} R\in \varSigma_I^R$, where $\varSigma_I^R$ denotes the set of $R$-cylinders with index set $I$. Similarly, let $\varSigma_I$ be the set of all cylinders with index set $I$, and
\[
\varSigma = \bigcup_{I \subseteq [N]} \varSigma_I
\]
the collection of all cylinders with index set contained in $[N]$; $\varSigma^R$ is defined analogously. For $C \in \varSigma^R$ and $T \in \{R, B, \ast\}$, define $n_T(C) \coloneqq \bigl|\{\, i \in I : C_i = T \,\}\bigr|$, and note that $n_R(C) + n_B(C) + n_\ast(C) = |I|$.

Elements $c_I \in C_I \in \varSigma_I$ are called \emph{configurations}. For $\zeta\in\{\mathrm{r},\mathrm{b}\}$, we define $n_\zeta(c_I) = \lvert \{ i \in I : c_i = \zeta \}\rvert$, and note that $n_\mathrm{r}(c_I) + n_\mathrm{b}(c_I) = |I|$.

When the index set $I$ is clear from the context, we write $C$ and $c$ instead of $C_I$ and $c_I$, respectively. Moreover, in our examples we will usually write $R*BB*$ instead of $R \times * \times B \times B \times *$.
\end{definition}

We now introduce the aforementioned notion of compatibility. 
\begin{definition}[Compatibility]\label{def.compatB}
 Let $\as=\as(\gs_0) = (\mathcal{g}(\gs_0), \mirrorp(\gs_0))$ be a realization of an AIG on a finite time interval with root set $\gs_0$ and leaf set $\ell$. The symbol $\mathcal{g}$ is used for the realization of $\Gs(\gs_0)$, and $\mirrorp(\gs_0)$ for the corresponding realization of $\sP(\gs_0)$, see Def.~\ref{def.AIG}. 
 We refer to 
 \[ c=(c_i)_{i\in \ell}\in \{r,b\}^{\ell}\coloneqq \underset{i\in\ell}{\bigtimes}\{\mathrm{r},\mathrm{b}\} \quad \text {and} \quad \overline{C}=(\overline{C}_i)_{i\in {\gs_0}}\in \{\mathrm{r},\mathrm{b}\}^{\gs_0} \]
as a \emph{leaf-type configuration} and a \emph{root-type configuration}.
Now consider some leaf-type configuration $c\in \{\mathrm{r},\mathrm{b}\}^{\ell}$. By assigning the types $c$ to the leaves of $\as$ and propagating them along the lines of $\as$ according to the rules induced by $\mirrorp(\gs_0)$, one obtains the root-type configuration, denoted
$\overset{\rightarrow}{T}_{\as} (c) \in \{\mathrm{r},\mathrm{b}\}^{\gs_0}$.
We say that a cylinder $C \in \varSigma_{\ell}$ is \emph{compatible} with $\as$ and a set 
$S\in \varSigma_{\gs_0}$ of root-type configurations if, for any leaf-type configuration
$c \in C$, we have
$
\overset{\rightarrow}{T}_{\as}(c) \in S,
$
i.e. if $\overset{\rightarrow}{T}_{\as}(C) \subseteq S$.
We refer to 
\[\mathcal{S}(\as,S)\coloneqq({\overset{\rightarrow}{T}_{\as}})^{-1}(S)
\]
as the set of $(\as, S)$-compatible (leaf) configurations.
\end{definition}

The set of compatible configurations, with respect to different variants of the AIG, will be our main interest in this manuscript. The next result formalizes the connection between the set of compatible configurations and the process $X^{(N)}$, which encodes the evolution of the number of unfit individuals in the Moran model.

Let $\Gamma^{(i)}$ $\in \{\mathrm{r},\mathrm{b}\}^N$ be uniformly distributed over all configurations with exactly $i$ entries equal to $\mathrm{r}$ and $N-i$ equal to $\mathrm{b}$, and assume that $\Gamma^{(i)}$ is independent of $\sP^-$ and of the initial sampling configuration of the AIG.
For a set $B$ of configurations on $I \subset [N]$, we write $\Gamma^{(i)} \in B$ to mean that the restriction of $\Gamma^{(i)}$ to $I$ lies in $B$; equivalently, $\Gamma^{(i)} \in B^*$, where $B^*$ denotes the set of configurations on $[N]$ obtained from elements of $B$ by adding $*$'s on $[N]\setminus I$.
\begin{lemma}[Pre-duality]\label{pre-duality}
Let $\Gamma^{(i)}$ be as above. Then, for all $t \geq 0$, $n \in [N]_{0}^\Delta$, and $i \in [N]_0$, we have
\begin{equation}\label{eq:preduality}
    \Eb\left[ \frac{{(X^{(N)}_t)}^{\underline{n}}}{N^{\underline{n}}}\, \Big\vert \, X^{(N)}_0=i \right] = \Pb\big(\Gamma^{(i)}\in \Ss(\As_{[0,t]},R_{g_n})\big),   
\end{equation} 
where $g_n$ denotes the root set obtained by sampling without replacement $n$ individuals from the population at time $t$.
\end{lemma}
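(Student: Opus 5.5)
We couple the forward Moran model and the AIG on the same Poisson point process $\Ps$ of Definition~\ref{def.P}. Fix $t\ge 0$ and set $\tau=t$. We start the forward process at time $0$ with the random type configuration $\Gamma^{(i)}$. This configuration is exchangeable, has exactly $i$ unfit individuals, and is independent of $\Ps$. Since the transition rates of $X^{(N)}$ depend only on the current number of unfit individuals, the number of unfit individuals at time $t$ in this coupled construction has the law of $X^{(N)}_t$ given $X^{(N)}_0=i$. For $n=\Delta$ we read both sides as $0$ by convention (or exclude this case, since the left-hand side is then not defined forward), so we restrict to $n\in[N]_0$.

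\emph{Left-hand side as a probability.} Condition on the configuration at time $t$. Sample $g_n$ uniformly without replacement among the $N$ individuals, independently of everything else. The probability that all $n$ sampled individuals are unfit is $(X_t^{(N)})^{\underline n}/N^{\underline n}$. Taking expectations, the left-hand side of \eqref{eq:preduality} equals $\Pb(\text{all individuals in } g_n \text{ are unfit at time } t)$.

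\emph{Identifying the sample's types via the AIG.} The key step is the pathwise statement that the type configuration of $g_n$ at time $t$ equals $\overset{\rightarrow}{T}_{\As_{[0,t]}}(c)$, where $c$ is the restriction of $\Gamma^{(i)}$ to the leaf set $\ell$ of $\As_{[0,t]}(g_n)$. To prove this, we argue by induction over the finitely many event times of $\Ps$ in $[0,t]$, going backward from $t$. By construction of Definition~\ref{def.AIG}, the type of every line in $\Gs_r$ just after an event is a deterministic function of the types, just before the event, of the lines added or kept at that event. The relevant rules are the following:
\begin{itemize}
\item For a neutral arrow, the tail line determines the type.
\item For an FTW event, the continuing line and the tails in $\Bs$ determine the type.
\item For an interactive event, the continuing, incoming and checking lines determine the type.
\end{itemize}
Mutations are recorded in $\sP(g_n)$, and events whose tips lie outside $\Gs$ do not affect lines in $\Gs$. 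Hence forward propagation along $\As$ reproduces exactly the forward Moran dynamics restricted to the lines of $\Gs$. For FTW events the choice among the fit tails is irrelevant for the type, so no extra randomness enters. Consequently, all of $g_n$ is unfit if and only if $\overset{\rightarrow}{T}_{\As_{[0,t]}}(\Gamma^{(i)}|_\ell)\in R_{g_n}$, which is the event $\Gamma^{(i)}\in\Ss(\As_{[0,t]},R_{g_n})$ in the notation introduced above.

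\emph{Measurability and independence.} The AIG is a measurable function of $\sP$ restricted to $[0,t]$ and of $g_n$, and both are independent of $\Gamma^{(i)}$. The right-hand side is therefore a well-defined probability. Moreover, by the time-reversal invariance of the Poisson process, the law of $\As_{[0,t]}$ built from $\sP$ agrees with the one used in the statement.

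The main technical point is the pathwise consistency claim. Its delicate cases are relocations, which must track the line's position, and interactive events where $\gamma\in\{\alpha,\beta\}$, which should be checked directly against the forward rule.
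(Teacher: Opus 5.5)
Your proposal is correct and follows the paper's argument. You couple the forward Moran model, started from $\Gamma^{(i)}$, with the AIG through the same Poisson process $\Ps$ (taking $\tau=t$), read the left-hand side as the probability that a uniform sample of size $n$ at time $t$ is entirely unfit, and identify that event with $\Gamma^{(i)}\in\Ss(\As_{[0,t]},R_{g_n})$. Your backward induction over the event times, showing that propagating types along the AIG reproduces the Moran dynamics on the traced lines, simply spells out what the paper leaves implicit in ``using the definition of $\Ss(\As_{[0,t]},R_{g_n})$''.
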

\begin{proof}
Note that the process $\Ps$ and its mirrored version $\sP$ provide a natural coupling of the processes $(X^{(N)}_s)_{s\in[0,t]}$ and $(\Ss(\As_{[0,r]},R_{g_n}))_{r\in[0,t]}$. In addition to the randomness from $\Ps$, the forward process $X^{(N)}$ is subject to the randomness coming from the initial type configuration, chosen uniformly among all configurations with exactly $i$ unfit individuals. Similarly, the randomness inherent in the backward process stems from $\sP$ and the random sample $g_n$. The identity follows because both sides represent the probability that a sample of size $n$, drawn without replacement at time $t$, consists entirely of unfit individuals: the left-hand side is obtained by conditioning on $X^{(N)}_t$, while the right-hand side is obtained by conditioning on the AIG at time $t$ associated with the unfit sample of size $n$ and using the definition of $\Ss(\As_{[0,t]},R_{g_n})$.
\end{proof}

The next result provides a formal expression for the right-hand side of \eqref{eq:preduality} and holds beyond the class of $R$-cylinders.
\begin{lemma}\label{prop_average1}
Let $\mathcal{A}_{[0,t]}$ be an AIG with root set $\gs_0$, let $\Gamma^{(i)}$ be as above, and let $S\in \varSigma_{\gs_0}$. Then, we have 
    \begin{equation*}
       \Pb\big(\Gamma^{(i)}\in \mathcal{S}(\mathcal{A}_{[0,t]},S)\big)=\, \Eb\left[ \sum_{c \in \Ss(\mathcal{A}_{[0,t]},S)}\frac{i^{\underline{n_r(c)}}}{N^{\underline{n_r(c)}}}  \frac{(N-i)^{\underline{n_b(c)}}}{N^{\underline{n_b(c)}}}\right].
    \end{equation*}
\end{lemma}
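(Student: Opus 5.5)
The plan is to condition on the AIG and use the independence of $\Gamma^{(i)}$ from $\sP$ and from the sampling of the roots. Concretely, I would write
\[
\Pb\big(\Gamma^{(i)}\in \Ss(\As_{[0,t]},S)\big)=\Eb\Big[\Pb\big(\Gamma^{(i)}\in \Ss(\As_{[0,t]},S)\,\big\vert\, \As_{[0,t]}\big)\Big].
\]
Inside the conditional probability the set $\Ss(\As_{[0,t]},S)$ is a deterministic subset of $\{\mathrm r,\mathrm b\}^{\ell}$, where $\ell$ is the leaf set at backward time $t$. Since $\ell$ is determined by $\As_{[0,t]}$, no extra randomness enters from the leaf positions. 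By the convention stated before Lemma~\ref{pre-duality}, $\Gamma^{(i)}\in \Ss(\As_{[0,t]},S)$ means that the restriction $\Gamma^{(i)}|_\ell$ lies in this set.

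Next I would decompose the event into the disjoint events $\{\Gamma^{(i)}|_{\ell}=c\}$ for $c\in\Ss(\As_{[0,t]},S)$. These events are disjoint because a configuration on $\ell$ is a full type assignment with no $\ast$ entries. By countable (here finite) additivity, the conditional probability equals
\[
\sum_{c\in\Ss(\As_{[0,t]},S)}\Pb\big(\Gamma^{(i)}|_\ell=c\big).
\]

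The remaining step is to evaluate $\Pb(\Gamma^{(i)}|_\ell=c)$ for a fixed $c$. Because $\Gamma^{(i)}$ is uniform over configurations with exactly $i$ entries $\mathrm r$, its law is exchangeable. The probability that the $n_r(c)$ prescribed positions are red and the $n_b(c)$ prescribed positions are blue is therefore a sampling-without-replacement (hypergeometric) count. Reveal the $|\ell|$ positions one at a time: first the red-designated positions, with successive probabilities $\frac{i}{N},\frac{i-1}{N-1},\dots$, then the blue-designated positions, with probabilities $\frac{N-i}{N-n_r(c)},\dots$. Multiplying these factors and inserting the result into the sum, then taking expectation over $\As_{[0,t]}$, gives the claimed expression.

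The main obstacle is the normalisation in this last step. The sequential computation gives the exact value
\[
\Pb\big(\Gamma^{(i)}|_\ell=c\big)=\frac{i^{\underline{n_r(c)}}\,(N-i)^{\underline{n_b(c)}}}{N^{\underline{n_r(c)+n_b(c)}}},
\]
and $N^{\underline{n_r(c)+n_b(c)}}=N^{\underline{n_r(c)}}\,(N-n_r(c))^{\underline{n_b(c)}}$. This coincides with the stated product form $\frac{i^{\underline{n_r(c)}}}{N^{\underline{n_r(c)}}}\frac{(N-i)^{\underline{n_b(c)}}}{N^{\underline{n_b(c)}}}$ whenever $n_r(c)=0$ or $n_b(c)=0$. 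In particular it agrees on the all-red configurations relevant for $R$-cylinders, where the right-hand side collapses to $i^{\underline{n}}/N^{\underline{n}}$.

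For mixed configurations the two normalisations differ. I would therefore check carefully whether $\Gamma^{(i)}$ is meant to assign red and blue types to disjoint index blocks separately, in which case the product form is exact. Otherwise the second factor should be read with denominator $(N-n_r(c))^{\underline{n_b(c)}}$, and the stated identity holds only after that correction; this is the point I would resolve before writing the final proof.
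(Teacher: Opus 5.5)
Your route is the paper's route. Conditioning on $\As_{[0,t]}$ (independent of $\Gamma^{(i)}$), splitting into the disjoint events $\{\Gamma^{(i)}|_\ell=c\}$ for $c\in\Ss(\As_{[0,t]},S)$, and evaluating each by hypergeometric sampling is exactly the two-line proof given there.

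The normalisation problem you flag is real, and you should not hedge it. $\Gamma^{(i)}$ is defined as one uniform configuration on $[N]$ with exactly $i$ red entries, so no reading with separately sampled blocks is available. Your value
\[
\Pb\big(\Gamma^{(i)}|_\ell=c\big)=\frac{\binom{N-\lvert \ell\rvert}{i-n_r(c)}}{\binom{N}{i}}=\frac{i^{\underline{n_r(c)}}\,(N-i)^{\underline{n_b(c)}}}{N^{\underline{n_r(c)+n_b(c)}}}
\]
is the correct one. The product form in the statement, which the paper's proof writes down directly, fails whenever $c$ contains both types. Take $N=2$, $i=1$, $\gs_0=\{1,2\}$ and $S=\ast\ast$, so that $\Ss(\As_{[0,t]},S)=\{\mathrm r,\mathrm b\}^{\ell}$. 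The left-hand side is then $1$ for every $t$. On the event $\ell=\{1,2\}$ (positive probability, and probability one at $t=0$), however, the stated sum is $0+\tfrac14+\tfrac14+0=\tfrac12$. So state the lemma with denominator $N^{\underline{n_r(c)+n_b(c)}}$; with that correction your proof is complete.

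One claim in your last paragraph is inaccurate. You say the two formulas agree on the configurations relevant for $R$-cylinders, but an $R$-cylinder with $\ast$ entries, viewed as a set of configurations, also contains the mixed completions of its $\ast$'s. Summing the stated product form over those completions gives the wrong value. For example, with $N=3$, $i=2$ and $C=R\ast$, it gives $\tfrac13+\tfrac29=\tfrac59$, whereas the correct value is $\tfrac23$. With your corrected formula, the sum over completions marginalises to $i^{\underline{n_R(C)}}/N^{\underline{n_R(C)}}$. That is all the paper uses later, in Cor.~\ref{cor.R-preserving} and \eqref{eq.compatible_Frankenstein}. So the downstream results stand, but they follow from the corrected lemma, not from the one as stated.
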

\begin{proof}
Using additivity of probability and the hypergeometric sampling scheme, we obtain
\begin{align*}
    \Pb\big(\Gamma^{(i)}\in \Ss(\mathcal{A}_{[0,t]}, S)\big)
    &= \Eb\left[\sum_{c \in \Ss(\mathcal{A}_{[0,t]}, S)} \Pb(\Gamma^{(i)}=c)\right] \\
    &= \Eb\left[\sum_{c \in \Ss(\mathcal{A}_{[0,t]}, S)}
       \frac{i^{\underline{n_r(c)}}}{N^{\underline{n_r(c)}}}
       \frac{(N-i)^{\underline{n_b(c)}}}{N^{\underline{n_b(c)}}}\right].
\end{align*}
\end{proof}
In practice, this formula does not yield meaningful insight into the dynamics of the right-hand side of \eqref{eq:preduality}, mainly because we lack sufficient knowledge of the set $\Ss(\mathcal{A}_{[0,t]}, S)$. 
As a next step, we will explore this set at the level of cylinders.

\subsection{Tracing back compatible cylinders; the configuration process}\label{subsec:tracing_back}
So far, we have worked with the probability of the set of compatible configurations $\Ss(\aigt(\gs_0), S)$ without explicitly addressing its dynamics or structure. Figures~\ref{fig.dotted}--\ref{fig.binary} illustrate the local effects of AIG transitions on $\Ss(\aigt(\gs_0),S)$. Starting from a single cylinder, after finitely many transitions we still obtain a finite disjoint union of cylinders. For this reason, we will identify the set $\Ss(\aigt(\gs_0), S)$ with a finite collection of pairwise disjoint cylinders.

To this end, we will describe the action of the various transitions in the AIG on a given cylinder. In what follows, we assume that the AIG consists of the set $I$ of lines before an event and performs a transition after which the set of lines is $\tilde I$ (that is, $I$ and $\tilde I$ are the realizations of $\Gs^{}_{\rho-}$ and $\Gs^{}_{\rho}$ in Def.~\ref{def.AIG} and the set of lines to the right and to the left of the event in our pictures). Our aim is now to trace back the preimage of any given cylinder $C_I \in \varSigma_I$, under the forward type propagation for any given transition.
To prepare for this, recall that the line $\alpha$ that initiates the event is the only one that may change its type in the course of such a forward event. However, backward in time, this is no longer true. To see this, let $D \subseteq \tilde I$ be the set of potential influencers of line $\alpha$ to the left of the event. $D$ is some union of $\{\alpha\}, \{\beta\}, \{\gamma\}$ and $\Bs$ according to Def.~\ref{def.AIG}; in particular, it may or may not include $\alpha$ itself. 
Note that $\tilde I = (I \setminus \{\alpha\}) \cup D$. The preimage $\longtilde{C}_{\tilde{I}}$ may then be determined via the following rules:
\begin{enumerate}
\item[(A)] For the lines in $\tilde I \setminus D$, the type is unchanged by the event, so $\longtilde C_\ell =C_\ell$ for $\ell \in \tilde I \setminus D$.
\item[(B1)] The types of the lines in $D$ to the left of the event must be compatible with $C_\alpha$ under the forward type propagation prescribed by the event. 
\item[(B2)] At the same time, the types of the lines in $D\cap (I\setminus \{\alpha\})$ to the left of the event must be compatible with those to the right of the event. 
\end{enumerate}
In order to avoid the case distinction between $\ell \in D \setminus (I\setminus \{\alpha\})$
(affected by rule (B1)) and $\ell \in D \cap (I\setminus \{\alpha\})$ 
 (affected by rules (B1) and (B2)), we introduce $C^\ast$ as the extension from $C_I$ to
$C_{\tilde I}$ by adding $\ast$'s; that is, $C_\ell^*=C_\ell$ if $\ell \in I$ and $*$ if $\ell \in \tilde I \setminus I$. In many of our pictures, the lines in $D \setminus \{\alpha\}$ will be dotted; they may or may not be contained in $I$. With respect to the lines in $D$, the cylinders that satisfy both (B1) and (B2) are thus the intersections of $C^*_D$ with the cylinders compatible with $C_\alpha$. Together with the coordinates for $\tilde I \setminus D$, they constitute the preimage of $C_I$.

\begin{figure}[htbp]
\centering
\begin{subfigure}{0.24\textwidth}\centering
\begin{tikzpicture}
  \draw[semithick] (0.4,1) -- (1.25,1);
  \draw[semithick] (1.25,1) -- (2.25,1);
  \draw[semithick] (1.25,0) -- (2.25,0);
  \draw[-{triangle 45[scale=1.1]},semithick] (1.25,1) -- (1.25,0);
  \node at (2.5,1) {$C_\beta$};
  \node at (2.45,0) {$T$};
  \node at (-0.2,1) {$T\cap C_\beta$};
\end{tikzpicture}
\caption{$C_\beta^\ast \neq \ast$, $\beta \in I$}
\label{fig.dottedb}
\end{subfigure}
\hfill
\begin{subfigure}{0.18\textwidth}
\centering
\begin{tikzpicture}
  \draw[semithick] (0.4,1) -- (1.25,1);
  \draw[semithick] (1.25,1) -- (2.25,1);
  \draw[semithick] (1.25,0) -- (2.25,0);
  \draw[-{triangle 45[scale=1.1]},semithick] (1.25,1) -- (1.25,0);
  \node at (2.4,1) {$\ast$};
  \node at (2.45,0) {$T$};
  \node at (0.2,1) {$T$};
\end{tikzpicture}
\caption{$C_\beta^{\ast} = \ast$, $\beta\in I$}
\label{fig.dottedc}
\end{subfigure}
\hfill
\begin{subfigure}{0.175\textwidth}\centering
\begin{tikzpicture}
  \draw[semithick] (0.4,1) -- (1.25,1);
  \draw[semithick] (1.25,0) -- (2.25,0);
  \draw[-{triangle 45[scale=1.1]},semithick] (1.25,1) -- (1.25,0);
  \node at (2.45,0) {$T$};
  \node at (0.2,1) {$T$};
\end{tikzpicture}
\caption{$C_\beta^\ast = \ast$, $\beta\notin I$}
\label{fig.dottedd}
\end{subfigure}
\hfill
\begin{subfigure}{0.24\textwidth}\centering
\begin{tikzpicture}
  \draw[semithick] (0.4,1) -- (1.25,1);
  \draw[semithick, dotted] (1.25,1) -- (2.25,1);
  \draw[semithick] (1.25,0) -- (2.25,0);
  \draw[-{triangle 45[scale=1.1]},semithick] (1.25,1) -- (1.25,0); 
  \node[align=left] at (2.45,1) {$C_\beta^\ast$};
  \node at (2.45,0) {$T$};
  \node[align=right] at (-0.2,1) {$T \cap C_\beta^\ast$};
\end{tikzpicture}
\caption{Arbitrary $\beta$ and $C_\beta^\ast$}
\label{fig.dotteda}
\end{subfigure}
\caption{Visualization of  Eq.~\eqref{eq.transition_coalB}, with $T\in \{R, B, \ast\}$. Panel~(d) summarizes (a)--(c).}
\label{fig.dotted}
\end{figure}

\textbf{Non-interactive neutral events} (see Fig. \ref{fig.dotted}): Assume that an event of type $\coal{\alpha}{\beta}$ occurs between lines $\alpha\in I$ and $\beta\in[N]\setminus \{\alpha\}$. Then $D=\{\beta\}$, $\tilde{I}=(I\,\setminus\{\alpha\})\cup\{\beta\}$, and $C_I \to \ocoal{\alpha}{\beta}(C_I)=\longtilde{C}_{\tilde{I}}$, where, for $\ell\in\tilde{I}$,
\begin{equation}\label{eq.transition_coalB}
   \longtilde{C}_\ell=\begin{cases}
        C_{\ell} &\text{ for } \ell\neq \beta, \\
        C_\alpha \cap C^{*}_{\beta} &\text{ for } \ell= \beta.
    \end{cases}
\end{equation}
The transition reflects the fact that, in the forward type propagation, the event means that line $\alpha$ receives the type from line $\beta$, and nothing happens to the other lines. So the first case in \eqref{eq.transition_coalB} reflects rule (A), while the second reflects (B1) and (B2). Note that the transition maps $C_I$ to the cemetery state $\varnothing$ if the intersection is empty. Note also that \eqref{eq.transition_coalB} covers both coalescence events (with $\beta \in I$) and relocation events (with $\beta \notin I$), see Figure~\ref{fig.dottedc} and~\ref{fig.dottedd}.

\begin{figure}[htbp]
\begin{tikzpicture}[x=1cm, y=1cm]
    \draw[semithick] (0,0) -- (2,0);
    \node at (1,0) {$\times$};
    \node at (-0.3,0) {$\ast$};
    \node at (2.3,0) {$R$};

    \draw[semithick] (3.5,0) -- (5.5,0);
    \node at (4.5,0) {$\times$};
    \node at (3.2,0) {$\varnothing$};
    \node at (5.8,0) {$B$};
    
    \draw[semithick] (7,0) -- (9,0);
    \draw (8,0) circle (1mm) [fill=white];
    \node at (6.7,0) {$\varnothing$};
    \node at (9.3,0) {$R$};

    \draw[semithick] (10.5,0) -- (12.5,0);
    \draw (11.5,0) circle (1mm) [fill=white];
    \node at (10.2,0) {$\ast$};
    \node at (12.8,0) {$B$};
\end{tikzpicture}
\caption{Backward type propagation for mutation events. Deleterious mutations are marked by $\times$ and beneficial mutations by $\circ$.}
\label{fig.mutations}
\end{figure}

\textbf{Mutation events} (see Fig.~\ref{fig.mutations}): Assume now that a deleterious mutation event $\timesa$ occurs on line $\alpha\in I$. Then $D=\{\alpha\}$, $\tilde I = I$, and $C_I \to \omutr{\alpha}(C_I)=\longtilde{C}_{\tilde I}$, where, for $\ell\in {\tilde I}$,
\begin{equation}\label{eq.outcome_mutation_coalR}
   \longtilde{C}_\ell=\begin{cases}
        C_{\ell} &\text{ for } \ell\neq \alpha, \\
        \varnothing &\text{ for } \ell= \alpha\text{ if $C_\alpha\cap R=\varnothing$},\\
         \ast &\text{ for } \ell=\alpha\text{ if $C_\alpha\cap R\neq\varnothing$}.\\
    \end{cases}
\end{equation}
This is because, forward in time, a deleterious mutation assigns type $\mathrm{r}$ to line $\alpha$, irrespective of the configuration to the left of the event; consequently, rule (1a) leads to case~1 in \eqref{eq.outcome_mutation_coalR}, rule (B1) leads to cases 2 and 3, and rule (B2) is void since $D \cap (I \setminus \alpha)=\varnothing$. Similarly, if a mutation event $\circa$ occurs on line $\alpha\in I$, then again $D=\{\beta\}$, $\tilde{I}=(I\,\setminus\{\alpha\})\cup\{\beta\}$, but now
$C_I \to \omutb{\alpha}(C_I)=\longtilde{C}_{\tilde I}$ where, 
for $\ell\in \tilde I$,
\begin{equation}\label{eq.outcome_mutation_coalB}
   \longtilde{C}_\ell=\begin{cases}
        C_{\ell} &\text{ for } \ell\neq \alpha, \\
        \varnothing &\text{ for } \ell= \text{ if $C_\alpha\cap B=\varnothing$},\\
         \ast &\text{ for } \ell=\alpha\text{ if $C_\alpha\cap B\neq\varnothing$}.\\
    \end{cases}
\end{equation}

\begin{figure}[htbp]
\centering
\begin{subfigure}[c]{0.43 \textwidth}
\begin{tikzpicture}
\draw[semithick] (0.4,6) -- (4.5,6); 
\draw[semithick] (0.4,5) -- (2.5,5); 
\draw[semithick] (0.4,4) -- (2.5,4);
\draw[semithick] (0.4,3) -- (2.5,3);
\draw[semithick] (0.4,2) -- (2.5,2);
\draw[semithick] (0.4,1) -- (2.5,1);

\draw[semithick, dotted] (2.5,1) -- (4.5,1);
\draw[semithick, dotted] (2.5,2) -- (4.5,2);
\draw[semithick, dotted] (2.5,3) -- (4.5,3);
\draw[semithick, dotted] (2.5,4) -- (4.5,4);
\draw[semithick, dotted] (2.5,5) -- (4.5,5);

\def\x{2.5} 

\coordinate (a) at (\x,5);
\coordinate (b) at (\x,4);
\coordinate (c) at (\x,3);
\coordinate (d) at (\x,2);
\coordinate (e) at (\x,1);

\coordinate (target) at (\x,6);

\draw[-{open triangle 45[scale=5]},semithick] (a) -- (target);
\draw[-{open triangle 45[scale=5]},semithick] (b) .. controls (\x+0.5, 4.8) .. (target);
\draw[-{open triangle 45[scale=5]},semithick] (c) .. controls (\x+1, 4.6) .. (target);
\draw[-{open triangle 45[scale=5]},semithick] (d) .. controls (\x+1.5, 4.4) .. (target);
\draw[-{open triangle 45[scale=5]},semithick] (e) .. controls (\x+2, 4.2) .. (target);

\draw (a) node[shape=rectangle, draw, fill=white!100, inner sep=1.5pt, yshift=0.3ex] {\footnotesize $\vert \Bs \vert$};
\draw (d) node[shape=rectangle, draw, fill=white!100, inner sep=1.5pt, yshift=0.3ex] {\footnotesize $2$};
\draw (e) node[shape=rectangle, draw, fill=white!100, inner sep=1.5pt, yshift=0.3ex] {\footnotesize $1$};
\node at (2.5,4.6) {\vdots};
\node at (2.5,3.6) {\vdots};
\node at (2.5,2.6) {\vdots};

\node at (4.9,6) {$R$};

\node at (0,6) {$R$};
\node at (0,5) {$R$};
\node at (0,4.6) {\vdots};
\node at (0,4) {$R$};
\node at (0,3.6) {\vdots};
\node at (0,3) {$R$};
\node at (0,2.6) {\vdots};
\node at (0,2) {$R$};
\node at (0,1) {$R$};

\node[draw, semithick, minimum width=0.5cm, minimum height=5.6cm, anchor=center] at (0,3.5) {};
\end{tikzpicture}
\end{subfigure}
\hfill
\begin{subfigure}[c]{0.55 \textwidth}
\begin{tikzpicture}

\draw[semithick] (0.4,6) -- (4.5,6); 
\draw[semithick] (0.4,5) -- (4.5,5); 
\draw[semithick] (0.4,4) -- (4.5,4);
\draw[semithick] (0.4,3) -- (2.5,3);
\draw[semithick] (0.4,2) -- (2.5,2);
\draw[semithick] (0.4,1) -- (2.5,1);

\def\x{2.5} 

\coordinate (a) at (\x,5);
\coordinate (b) at (\x,4);
\coordinate (c) at (\x,3);
\coordinate (d) at (\x,2);
\coordinate (e) at (\x,1);

\coordinate (target) at (\x,6);

\draw[-{open triangle 45[scale=5]},semithick] (a) -- (target);
\draw[-{open triangle 45[scale=5]},semithick] (b) .. controls (\x+0.5, 4.8) .. (target);
\draw[-{open triangle 45[scale=5]},semithick] (c) .. controls (\x+1, 4.6) .. (target);
\draw[-{open triangle 45[scale=5]},semithick] (d) .. controls (\x+1.5, 4.4) .. (target);
\draw[-{open triangle 45[scale=5]},semithick] (e) .. controls (\x+2, 4.2) .. (target);

\draw (a) node[shape=rectangle, draw, fill=white!100, inner sep=1.5pt, yshift=0.3ex] {\footnotesize $\vert \Bs_0\vert$};
\draw (b) node[shape=rectangle, draw, fill=white!100, inner sep=1.5pt, yshift=0.3ex] {\footnotesize $1$};
\draw (c) node[shape=rectangle, draw, fill=white!100, inner sep=1.5pt, yshift=0.3ex] {\footnotesize $\vert \Bs_1\vert$};
\draw (d) node[shape=rectangle, draw, fill=white!100, inner sep=1.5pt, yshift=0.3ex] {\footnotesize $2$};
\draw (e) node[shape=rectangle, draw, fill=white!100, inner sep=1.5pt, yshift=0.3ex] {\footnotesize $1$};
\node at (2.5,4.6) {\vdots};
\node at (2.5,2.6) {\vdots};

\node at (4.9,6) {$B$};
\node at (4.9,5) {$R$};
\node at (4.9,4) {$R$};

\node at (-2.1,6) {$\ast$};
\node at (-1.4,6) {$\ast$};
\node at (-0.7,6) {$\ast$};
\node at (0,6) {$B$};

\node at (0,5) {$R$};
\node at (0,4.6) {\vdots};
\node at (0,4) {$R$};

\node at (-0.7,5) {$R$};
\node at (-0.7,4.6) {\vdots};
\node at (-0.7,4) {$R$};

\node at (-1.4,5) {$R$};
\node at (-1.4,4.6) {\vdots};
\node at (-1.4,4) {$R$};

\node at (-2.1,5) {$R$};
\node at (-2.1,4.6) {\vdots};
\node at (-2.1,4) {$R$};

\node at (-2.1,3) {$\ast$};
\node at (-1.4,3) {$\ast$};
\node at (-0.7,3) {$B$};
\node at (0,3) {$R$};

\node at (-2.1,2) {$\ast$};
\node at (-1.4,2) {$B$};
\node at (-0.7,2) {$R$};
\node at (0,2) {$R$};

\node at (-2.1,1) {$B$};
\node at (-1.4,1) {$R$};
\node at (-0.7,1) {$R$};
\node at (0,1) {$R$};

\foreach \x in {0,-0.7, -1.4, -2.1}  
{\node[draw, semithick, minimum width=0.5cm, minimum height=5.6cm, anchor=center] at (\x,3.5) {};}
\end{tikzpicture}
\end{subfigure}
\caption{Backward type propagation for an $\tilde s_{5}$ branching event. For the purpose of illustration, the continuing line is always displayed at the top, followed by the set $\Bs_0$ of incoming lines from the set $I$ and the set $\Bs_1$ of incoming lines from $[N]\setminus I$. An unfit descendant corresponds to all incoming lines being unfit \textit{(left)} and a fit descendant requires at least one fit potential parent, either the continuing line itself, a member of $\Bs_0$, or a member of $\Bs_1$ \textit{(right)}.} 
\label{fig.FTW_new}
\end{figure}

\textbf{Selective events} (see Fig.~\ref{fig.FTW_new}): Assume now a selective event $\ftw{\alpha}{\Bs}$ involving the line $\alpha\in I$ and the lines in $\Bs$. Further, let $\Bs=\Bs_0\cup\Bs_1$ with $\Bs_0\subseteq I$ and $\Bs_1\cap I=\varnothing$. We then have $D=\Bs$ and $\tilde{I}=I\cup\Bs=I\cup\Bs_1$. The mapping now is $C_I\mapsto \oftw{\alpha}{\Bs}(C_I)$, where the latter is defined as follows.
If $C_\alpha\neq B$, then $\oftw{\alpha}{\Bs}(C_I)=\{\longtilde{C}_{\tilde{I}}\}$, where
\begin{equation}\label{eq.selR}
   \longtilde{C}_\ell=\begin{cases}
   C_\ell &\text{ for } \ell\notin \Bs \cup \{\alpha\}, \\
        C_\alpha \cap C^*_\ell &\text{ for } \ell \in \Bs \cup \{\alpha\}.\\
    \end{cases}
\end{equation}
By now, the role of the rules (A), (B1), (B2) should have become clear, so we can concentrate on the essentials and see that the transition reflects that, forward in time, the descendant at line $\alpha$ is unfit if and only if all potential parents (that is, the elements of $\Bs \cup \{\alpha\}$) are unfit; there is no such condition if $C_\alpha=*$.
If $C_\alpha= B$ and there is $\beta_0\in\Bs_0 \setminus \{\alpha\}$ with $C_{\beta_0}=B$, then $\oftw{\alpha}{\Bs}(C_I)=\{\longtilde{C}_{\tilde{I}}\}$,
where
\begin{equation}\label{eq.selB}
   \longtilde{C}_\ell=\begin{cases}
        C_{\ell} &\text{ for } \ell\notin \Bs_1\cup \{\alpha\}, \\
        \ast &\text{ for } \ell\in\Bs_1\cup\{\alpha\}.\\
    \end{cases}
\end{equation}
This transition comes from the fact that, under FTW, the descendant is fit precisely when at least one of the potential parents is fit. If such a fit potential parent is in $\Bs_0\setminus \{\alpha\}$, then $C_\alpha=B$, regardless of the types of the lines in $\Bs_1\cup\{\alpha\}$ to the left of the event; all other lines continue unchanged.
If $C_\alpha= B$ and $C_{\beta_0}=R$ for all $\beta_0\in\Bs_0 \setminus \{\alpha\}$, then $C_I\mapsto \oftw{\alpha}{\Bs}(C_I)
=\{\longtilde{C}_{\tilde{I}}^b\}_{b\in \Bs_1 \cup \{\alpha\}}$, where

\begin{equation}\label{eq.selB1}
\longtilde{C}_\ell^b=
   \begin{cases}
        C_{\ell} &\text{ for } \ell\notin \Bs_1\cup\{\alpha\}, \\
         R &\text{ for } b>\ell\in\Bs_1 \cup \{\alpha\},\\
       B &\text{ for } \ell=b,\\
        \ast&\text{ for } b<\ell\in\Bs_1 \cup \{\alpha\}.
    \end{cases} 
 \quad\text{for } \, b \in \Bs_1, \text{ and} \;    
\longtilde C_\ell^\alpha=\begin{cases}
        C_{\ell} &\text{ for } \ell\notin \Bs_1\cup\{\alpha\}, \\
        B &\text{ for } \ell=\alpha,\\
        R&\text{ for } \ell\in\Bs_1.
    \end{cases} 
\end{equation}
Note that in contrast to the previous cases, $\oftw{\alpha}{\Bs}$ is not a cylinder, but a collection of (disjoint) cylinders, indexed according to where the first $B$ appears (for $b \in \Bs_{1}$, it appears on line $b$ when looking at the lines in $\Bs_1$ in increasing order). 

\begin{figure}[htbp]
\begin{subfigure}[c]{0.3\textwidth}
\centering
    \scalebox{0.8}{
    \begin{tikzpicture}
    \draw[semithick] (0.4,2) -- (4.5,2); 
    \draw[semithick] (0.4,1) -- (2.5,1); 
    \draw[semithick] (0.4,0) -- (2.5,0); 
    
    \draw[-{triangle 45[scale=5]}, semithick] (2.5,1) -- (2.5,2); 
    \draw[-{Diamond[open, scale = 1.3]}, semithick, dashed ] (2.5,1)-- (2.5,0); 

    \node at (4.7,2) {$R$};

    \node at (0.1,2) {$\ast$};
    \node at (-0.5,2) {$R$};

    \node at (0.1,1) {$R$};
    \node at (-0.5,1) {$\ast$};

    \node at (0.1,0) {$B$};
    \node at (-0.5,0) {$R$};

    \foreach \x in {0.1,-0.5} 
    {\node[draw, semithick, minimum width=0.5cm, minimum height=2.6cm, anchor=center] at (\x,1) {};}        
\end{tikzpicture}}
\end{subfigure}
\hfill
\begin{subfigure} [c]{0.3\textwidth}
\centering
\scalebox{0.8}{
\begin{tikzpicture}

    \draw[semithick] (0.4,2) -- (4.5,2); 
    \draw[semithick] (0.4,1) -- (4.5,1); 
    \draw[semithick] (0.4,0) -- (2.5,0); 
    
    \draw[-{triangle 45[scale=5]}, semithick] (2.5,1) -- (2.5,2); 
    \draw[-{Diamond[open, scale = 1.3]}, semithick, dashed] (2.5,1) -- (2.5,0); 
  
    \node at (4.7,1) {$R$};
    \node at (4.7,2) {$R$};

    \node at (-0.5,2) {$\ast$};
    \node at (0.1,2) {$R$};

    \node at (-0.5,1) {$R$};
    \node at (0.1,1) {$R$};

    \node at (-0.5,0) {$B$};
    \node at (0.1,0) {$R$};

    \foreach \x in {0.1,-0.5} 
    {\node[draw, semithick, minimum width=0.5cm, minimum height=2.6cm, anchor=center] at (\x,1) {};} 
\end{tikzpicture}}
\end{subfigure}
\hfill
\begin{subfigure} [c]{0.3\textwidth}
\centering
\scalebox{0.8}{
\begin{tikzpicture}

    \draw[semithick] (0.4,2) -- (4.5,2); 
    \draw[semithick] (0.4,1) -- (2.5,1); 
    \draw[semithick] (0.4,0) -- (4.5,0); 
    
    \draw[-{triangle 45[scale=5]}, semithick] (2.5,1) -- (2.5,2); 
    \draw[-{Diamond[open, scale = 1.3]}, semithick, dashed] (2.5,1) -- (2.5,0); 

    \node at (4.7,0) {$R$};
    \node at (4.7,2) {$R$};

    \node at (0.1,2) {$R$};

    \node at (0.1,1) {$\ast$};

    \node at (0.1,0) {$R$};

    \foreach \x in {0.1} 
    {\node[draw, semithick, minimum width=0.5cm, minimum height=2.6cm, anchor=center] at (\x,1) {};}
    
\end{tikzpicture}}
\end{subfigure}
\caption{Backward type propagation for the cylinder $C_I=R_I$ under the event $\protect\ine{\alpha}{\beta}{\gamma}$ for $\gamma \notin \{\alpha, \beta \}$. The leftmost picture displays ternary branching, and the other two pairwise branching. Not depicted is the silent event $\{\alpha, \beta, \gamma\} \subseteq I$.}
\label{Fig.alphabetagamma} 
\end{figure}

\begin{figure}[htbp]%
\noindent%
\begin{subfigure}[t]{0.21\textwidth}
    \centering
    \vtop{%
    \null
    \hbox{
    \scalebox{0.8}{
   \begin{tikzpicture}[baseline=(current bounding box.north)]
   \path[use as bounding box] (-0.5,0.8) rectangle (2.3,2.2);
        \draw[semithick] (0.4,2) -- (2.25,2); 
        \draw[semithick] (0.4,1) -- (1.25,1); 
        \draw[-{triangle 45[scale=5]}, semithick] (1.25,1) -- (1.25,2); 
        \draw[-{Diamond[open, scale = 1.3]}, semithick, dashed, shorten >= 2pt] (1.25,1) .. controls (1.7,1.2) and (1.7,1.8) .. (1.25,2); 
        \node at (2.45,2) {$R$};
        \node at (-0.5,2) {$B$};
        \node at (0.1,2) {$R$};
        \node at (-0.5,1) {$R$};
        \node at (0.1,1) {$\ast$};
        \foreach \x in {0.1,-0.5} 
        {\node[draw, semithick, minimum width=0.5cm, minimum height=1.6cm, anchor=center] at (\x,1.5) {};}
        \end{tikzpicture}}}}
\end{subfigure}
\hfill
\begin{subfigure}[t]{0.19\textwidth}
    \centering
    \vtop{%
    \null
    \hbox{
    \scalebox{0.8}{
   \begin{tikzpicture}[baseline=(current bounding box.north)]
   \path[use as bounding box] (0,0.8) rectangle (2.3,2.2);
        \draw[semithick] (0.4,2) -- (2.25,2); 
        \draw[semithick] (0.4,1) -- (1.25,1); 
        \draw[-{triangle 45[scale=5]}, semithick] (1.25,1) -- (1.25,2); 
        \draw[-{Diamond[open, scale = 1.3]}, semithick, dashed, shorten >= 2pt] (1.25,1) .. controls (1.7,1.2) and (1.7,1.8) .. (1.25,2); 
        \node at (2.45,2) {$B$};
        \node at (0.1,2) {$B$};
        \node at (0.1,1) {$B$};
        \foreach \x in {0.1} 
        {\node[draw, semithick, minimum width=0.5cm, minimum height=1.6cm, anchor=center] at (\x,1.5) {};}
        \end{tikzpicture}}}}
\end{subfigure}
\hfill
\begin{subfigure}[t]{0.19\textwidth}  
    \centering  
    \vtop{%
    \null
    \hbox{
    \scalebox{0.8}{
     \begin{tikzpicture}[baseline=(current bounding box.north)]
     \path[use as bounding box] (0,0.8) rectangle (2.3,2.2);
            \draw[semithick] (0.4,2) -- (2.25,2); 
            \draw[semithick] (0.4,1) -- (1.25,1); 
            \draw[-{Triangle[scale=1.5]}, semithick] (1.25,1) -- (1.25,2); 
            \draw[-{Diamond[open, scale = 1.3]}, semithick, dashed,] (1.25,1)  to[out=45, in=315,looseness=30] (1.25,0.9); 
            \node at (2.45,2) {$R$};
            \node at (0.1,2) {$R$};
            \node at (0.1,1) {$R$};
            \node[draw, semithick, minimum width=0.5cm, minimum height=1.6cm, anchor=center] at (0.1,1.5) {};
        \end{tikzpicture}}}}
\end{subfigure}
\hfill
\begin{subfigure}[t]{0.21\textwidth}  
    \centering  
    \vtop{%
    \null
    \hbox{
    \scalebox{0.8}{
     \begin{tikzpicture}[baseline=(current bounding box.north)]
     \path[use as bounding box] (-0.5,0.8) rectangle (2.3,2.2);
            \draw[semithick] (0.4,2) -- (2.25,2); 
            \draw[semithick] (0.4,1) -- (1.25,1); 
            \draw[-{Triangle[scale=1.5]}, semithick] (1.25,1) -- (1.25,2); 
            \draw[-{Diamond[open, scale = 1.3]}, semithick, dashed,] (1.25,1)  to[out=45, in=315,looseness=30] (1.25,0.9); 
            \node at (2.45,2) {$B$};
            \node at (0.1,2) {$B$};
            \node at (-0.5,2) {$\ast$};           
            \node at (0.1,1) {$R$};
            \node at (-0.5,1) {$B$};
            \foreach \x in {0.1,-0.5} 
            \node[draw, semithick, minimum width=0.5cm, minimum height=1.6cm, anchor=center] at (\x,1.5) {};
       \end{tikzpicture}}}}
\end{subfigure}
\hfill
\begin{subfigure}[t]{0.19\textwidth}
    \centering
    \vtop{%
    \null
    \hbox{
    \scalebox{0.8}{
     \begin{tikzpicture}[baseline=(current bounding box.north)]
     \path[use as bounding box] (0,0.8) rectangle (2.3,2.2);
            \draw[semithick] (0.4,2) -- (2.25,2); 
            \draw[semithick] (0.4,1) -- (2.25,1); 
            \draw[-{Triangle[scale=1.5]}, semithick] (1.25,1) -- (1.25,2); 
            \draw[-{Diamond[open, scale = 1.3]}, semithick, dashed,] (1.25,1)  to[out=45, in=315,looseness=30] (1.25,0.9); 
            \node at (2.45,2) {$B$};
            \node at (2.45,1) {$B$};
            \node at (0.1,2) {$\ast$};        
            \node at (0.1,1) {$B$};
            \foreach \x in {0.1} 
            \node[draw, semithick, minimum width=0.5cm, minimum height=1.6cm, anchor=center] at (\x,1.5) {};
       \end{tikzpicture}}}}
\end{subfigure}
\caption{Backward type propagation for the cylinders $C_I=R_I$ and $C_I=B_I$ under the event $\protect\ine{\alpha}{\beta}{\gamma}$ for $\gamma\in \{\alpha, \beta\}$. For $\beta \notin I$, this results in binary branching and for $\beta \in I$ in a collision. For the cylinder $R_I$, this collision is silent and not depicted, whereas for the cylinder $B_I$  it is silent if $\gamma=\alpha$ and non-silent for $\gamma=\beta$.}
\label{fig.binary} 
\end{figure}

\textbf{Interactive neutral events} (see Figs.~\ref{Fig.alphabetagamma} and~\ref{fig.binary}): Assume now an interactive neutral event $\ine{\alpha}{\beta}{\gamma}$ involving the line $\alpha\in I$ and the lines $\beta \in [N]\setminus \{\alpha\},\, \gamma\in[N]$. Here, $D=\{\alpha,\beta,\gamma\}$ and $\tilde{I}=I\cup\{\gamma,\beta\}$. Then $C_I\mapsto  \{\onint{\alpha}{\beta}{\gamma}^{1}(C_I),\onint{\alpha}{\beta}{\gamma}^{2}(C_I)\}$, where $\onint{\alpha}{\beta}{\gamma}^{1}(C_I) = \longtilde{C}_{\tilde{I}}^1$ and $\onint{\alpha}{\beta}{\gamma}^{2}(C_I) =\longtilde{C}_{\tilde{I}}^2$ are defined as follows. If $\gamma \notin \{\alpha, \beta\}$ (see Fig.~\ref{Fig.alphabetagamma}),
\begin{equation}\label{eq.nint1}
   \longtilde{C}_\ell^1=\begin{cases}
        C_{\ell} &\text{ for } \ell\notin\{\gamma,\beta,\alpha\}, \\
        C_\alpha &\text{ for } \ell=\alpha,\\
        C^\ast_\beta &\text{ for } \ell=\beta,\\
        R\cap C^\ast_\gamma&\text{ for } \ell=\gamma,
    \end{cases}    
    \text{ and }\quad  
   \longtilde{C}_\ell^2=\begin{cases}
        C_{\ell} &\text{ for } \ell\notin\{\gamma,\beta,\alpha\}, \\
       \ast &\text{ for } \ell=\alpha,\\
        C_\alpha\cap C^\ast_\beta &\text{ for } \ell=\beta,\\
        B\cap C^\ast_\gamma&\text{ for } \ell=\gamma.
    \end{cases}  
\end{equation}
This reflects the case distinction whether 1) line $\gamma$ is unfit, whence nothing happens, or 2) line $\gamma$ is fit and hence, forward in time, line $\alpha$ receives the type from line $\beta$ -- regardless of the type of the former to the left of the event. 
If $\alpha=\gamma\neq\beta$, we have\begin{equation}\label{eq.nint2}
   \longtilde{C}_\ell^1=\begin{cases}
        C_{\ell} &\text{ for } \ell\notin\{\beta,\alpha\}, \\
       B &\text{ for } \ell=\alpha \,(=\gamma),\\
        B \cap C^\ast_\beta &\text{ for } \ell=\beta,\\
    \end{cases}    
    \text{ and }\quad  
   \longtilde{C}_\ell^2=\begin{cases}
        C_{\ell} &\text{ for } \ell\notin\{\beta,\alpha\}, \\
        R\cap C_\alpha &\text{ for } \ell=\alpha\,(=\gamma),\\
        C^\ast_\beta &\text{ for } \ell=\beta.\\
    \end{cases}  
\end{equation}
Once more, this is due to the case distinction of whether 1) line $\gamma$ is fit or 2) unfit. But since it now agrees with line $\alpha$, this means that 1) line $\alpha$ inherits its (fit) type from line $\beta$, or 2) nothing happens. 
Finally, if $\alpha\neq \beta=\gamma$, we have
\begin{equation}\label{eq.nint3}
   \longtilde{C}_\ell^1=
   \begin{cases}
        C_{\ell} &\text{ for } \ell\notin\{\beta,\alpha\}, \\
        \psi_R(C_\alpha) &\text{ for } \ell=\alpha,\\
   B\cap C^\ast_\beta &\text{ for } \ell=\beta\, (=\gamma),\\
    \end{cases}   
    \text{ and }\quad  
   \longtilde{C}_\ell^2= 
      \begin{cases}
        C_{\ell} &\text{ for } \ell\notin\{\beta,\alpha\}, \\
        C_\alpha &\text{ for } \ell=\alpha,\\
        R\cap C^\ast_\beta &\text{ for } \ell=\beta\, (=\gamma),\\
    \end{cases}  
\end{equation}
where $\psi_R(R)=\varnothing$ and $\psi_R(c)=\ast$ for $c\neq R$. 
With the same case distinction as before, we have the possibilities that 1) line $\gamma$ is fit; but since it now agrees with line $\beta$, the type of line $\alpha$ will only come from line $\beta$ if the former is compatible with $B$, otherwise the cylinder is empty; and 2) line $\beta$ is unfit and nothing happens.

Since \eqref{eq.nint1}, \eqref{eq.nint2}, and \eqref{eq.nint3} do not require the information which of the lines $\beta$ and $\gamma$ belong to the set $I$, we need no longer distinguish between ternary branching, pairwise branching, and collisions. The only distinction we have to make is whether the checking line is different from the incoming and the continuing lines (see \eqref{eq.nint1}), equal to the continuing line (see \eqref{eq.nint2}), or equal to the incoming line (see \eqref{eq.nint3}). These cases are summarized in Figure~\ref{fig.Rcilinders}. 

\begin{figure}[htbp]
\begin{subfigure}{0.21\textwidth}
    \centering  
    \vtop{%
    \null
    \hbox{
\begin{tikzpicture}
    \draw[semithick] (0.4,2) -- (2.25,2); 
    \draw[semithick] (0.4,1) -- (1.25,1); 
    \draw[semithick] (0.4,0) -- (1.25,0); 
    \draw[semithick,dotted] (2.25,1) -- (1.25,1); 
    \draw[semithick,dotted] (2.25,0) -- (1.25,0); 
    
    \draw[-{triangle 45[scale=5]}, semithick] (1.25,1) -- (1.25,2); 
    \draw[-{Diamond[open, scale = 1.3]}, semithick, dashed] (1.25,1) -- (1.25,0); 
  
    \node at (2.5,2) {$R$};
    \node at (2.5,1) {$\,C_\beta^\ast$};
    \node at (2.5,0) {$\,C_\gamma^\ast$};

    \node at (-1.3,2) {$R$};
    \node at (-0.3,2) {$\ast$};
    \node at (-1.3,1) {$\,C_\beta^\ast$};
    \node at (-0.3,1) {$R$};
    \node at (-1.3,0) {$R$};
  \node at (-0.3,0) {$B\cap C_\gamma^\ast$};
    \foreach \x in {-0.3} 
    {\node[draw, semithick, minimum width=1.2cm, minimum height=2.6cm, anchor=center] at (\x,1) {};}
    \node[draw, semithick, minimum width=0.5cm, minimum height=2.6cm, anchor=center] at (-1.3,1) {};
\end{tikzpicture}}}
\end{subfigure}
\hfill
\begin{subfigure}[t]{0.21\textwidth}
    \vtop{%
    \null
    \hbox{
   \begin{tikzpicture}[baseline=(current bounding box.north)]
        \draw[semithick] (0.4,2) -- (2.25,2); 
        \draw[semithick] (0.4,1) -- (1.25,1); 
           \draw[semithick,dotted] (2.25,1) -- (1.25,1); 
        \draw[-{triangle 45[scale=5]}, semithick] (1.25,1) -- (1.25,2); 
        \draw[-{Diamond[open, scale = 1.3]}, semithick, dashed, shorten >= 2pt] (1.25,1) .. controls (1.7,1.2) and (1.7,1.8) .. (1.25,2); 
        \node at (2.45,2) {$\,R$};
        \node at (2.45,1) {$\,C_\beta^\ast$};
        \node at (-0.7,2) {$B$};
        \node at (0,2) {$R$};
    
        \node at (-0.7,1) {$R$};
        \node at (0,1) {$C_\beta^\ast$};

        \foreach \x in {0,-0.7} 
        {\node[draw, semithick, minimum width=0.5cm, minimum height=1.6cm, anchor=center] at (\x,1.5) {};}
        \end{tikzpicture}}}
\end{subfigure}
\hfill
\begin{subfigure}[t]{0.21\textwidth}  
    \centering  
    \vtop{%
    \null
    \hbox{
     \begin{tikzpicture}[baseline=(current bounding box.north)]
            \draw[semithick] (0.4,2) -- (2.25,2); 
            \draw[semithick] (0.4,1) -- (1.25,1); 
            \draw[semithick,dotted] (2.25,1) -- (1.25,1); 
            \draw[-{Triangle[scale=1.5]}, semithick] (1.25,1) -- (1.25,2); 
            \draw[-{Diamond[open, scale = 1.3]}, semithick, dashed,] (1.25,1)  to[out=45, in=315,looseness=30] (1.25,0.9); 

            \node at (2.45,2) {$\,R$};
             \node at (2.45,1) {$\,C_\beta^\ast$};
            \node at (0,2) {$R$};
            \node at (-0.7,1.5) {$\varnothing$};

            \node at (0,1) {$R$};

            \foreach \x in {0,-0.7} 
         \node[draw, semithick, minimum width=0.5cm, minimum height=1.6cm, anchor=center] at (\x,1.5) {};
       \end{tikzpicture}}}
\end{subfigure}
\begin{subfigure}{0.21\textwidth}
    \centering  
    \vtop{%
    \null
    \hbox{
\begin{tikzpicture}
    \draw[semithick] (0.4,2) -- (2.25,2); 
    \draw[semithick] (0.4,1) -- (1.25,1); 
    \draw[semithick] (0.4,0) -- (1.25,0); 
    \draw[semithick,dotted] (2.25,1) -- (1.25,1); 
    \draw[semithick,dotted] (2.25,0) -- (1.25,0); 
    
    \draw[-{triangle 45[scale=5]}, semithick] (1.25,1) -- (1.25,2); 
    \draw[-{Diamond[open, scale = 1.3]}, semithick, dashed] (1.25,1) -- (1.25,0); 
  
    \node at (2.5,2) {$\ast$};
    \node at (2.5,1) {$\,C_\beta^\ast$};
    \node at (2.5,0) {$\,C_\gamma^\ast$};

    \node at (-0.3,2) {$\ast$};
    \node at (-1.3,2) {$\ast$};
    \node at (-0.3,1) {$\,C_\beta^\ast$};
    \node at (-1.3,1) {$\,C_\beta^\ast$};
    \node at (-0.3,0) {$B\cap C_\gamma^\ast$};
    \node at (-1.3,0) {$R$};
    \foreach \x in {-0.3} 
    {\node[draw, semithick, minimum width=1.2cm, minimum height=2.6cm, anchor=center] at (\x,1) {};}
    {\node[draw, semithick, minimum width=0.5cm, minimum height=2.6cm, anchor=center] at (-1.3,1) {};}
\end{tikzpicture}}}
\end{subfigure}
\hfill
\begin{subfigure}[t]{0.21\textwidth}
    \vtop{%
    \null
    \hbox{
   \begin{tikzpicture}[baseline=(current bounding box.north)]
        \draw[semithick] (0.4,2) -- (2.25,2); 
        \draw[semithick] (0.4,1) -- (1.25,1); 
        \draw[semithick,dotted] (2.25,1) -- (1.25,1); 
        \draw[-{triangle 45[scale=5]}, semithick] (1.25,1) -- (1.25,2); 
        \draw[-{Diamond[open, scale = 1.3]}, semithick, dashed, shorten >= 2pt] (1.25,1) .. controls (1.7,1.2) and (1.7,1.8) .. (1.25,2); 

        \node at (2.45,2) {$\,\ast$};
        \node at (2.45,1) {$\,C_\beta^\ast$};
        \node at (-0.7,2) {$B$};
        \node at (0,2) {$R$};
    
        \node at (-0.7,1) {$C_\beta^\ast$};
        \node at (0,1) {$C_\beta^\ast$};

        \foreach \x in {0,-0.7} 
        {\node[draw, semithick, minimum width=0.5cm, minimum height=1.6cm, anchor=center] at (\x,1.5) {};}
        \end{tikzpicture}}}
\end{subfigure}
\hfill
\begin{subfigure}[t]{0.21\textwidth}  
    \centering  
    \vtop{%
    \null
    \hbox{
     \begin{tikzpicture}[baseline=(current bounding box.north)]
            \draw[semithick] (0.4,2) -- (2.25,2); 
            \draw[semithick] (0.4,1) -- (1.25,1); 
            \draw[semithick,dotted] (2.25,1) -- (1.25,1); 
            \draw[-{Triangle[scale=1.5]}, semithick] (1.25,1) -- (1.25,2); 
            \draw[-{Diamond[open, scale = 1.3]}, semithick, dashed,] (1.25,1)  to[out=45, in=315,looseness=30] (1.25,0.9); 

            \node at (2.45,2) {$\,\ast$};
             \node at (2.45,1) {$\,C_\beta^\ast$};
            \node at (0,2) {$\ast$};
            \node at (-1,2) {$\ast$};
            \node at (0,1) {$R$};
            \node at (-1,1) {{$B\cap C_\beta^\ast$}};

            \node[draw, semithick, minimum width=0.5cm, minimum height=1.6cm, anchor=center] at (0,1.5) {};
            \node[draw, semithick, minimum width=1.2cm, minimum height=1.6cm, anchor=center] at (-1,1.5) {};
       \end{tikzpicture}}}
\end{subfigure}
\caption{Examples illustrating \eqref{eq.nint1} \textit{(left)}, \eqref{eq.nint2} \textit{(middle)}, and \eqref{eq.nint3} \textit{(right)} for \(R\)-cylinders with \(C_\alpha = R\) (top) and \(C_\alpha=\ast\) (bottom). Lines that may or may not belong to the sample $I$ on the right are dotted; their types are set to \(\ast\) when they do not belong to $C_I$. Note that, for $R$-cylinders, $C_\beta^*\in \{R, \ast\}$, and we read from left to right.}
\label{fig.Rcilinders} 
\end{figure}

Note that, depending on the type configuration, some cylinders may be empty. Although one could in principle discard them, it is convenient to always produce two cylinders in order to avoid case distinctions. Moreover, for a majority of cylinders we have 
\begin{equation}\label{eq.identity}
\longtilde{C}_{\tilde{I}}^1 \cup \longtilde{C}_{\tilde{I}}^2=C^{\ast}_{\tilde I}    
\end{equation}
and therefore could identify $\onint{\alpha}{\beta}{\gamma}$ with the identity. However, since we are aiming for a unified framework and \eqref{eq.identity} does not hold for all cylinders (see Table~\ref{tab:cylinders}), we keep $\longtilde{C}_{\tilde{I}}^1$ and $\longtilde{C}_{\tilde{I}}^2$ separate.

\begin{table}[htbp]
\centering
\caption{The cylinders $C_D$ for which \eqref{eq.identity} does not hold, along with the respective cylinders $\longtilde{C}_{\tilde{I}}^1$ and $\longtilde{C}_{\tilde{I}}^2$ at the relevant positions. On the left we have $\alpha=1,  \beta=2$ and $\gamma=3$ and on the right $\alpha=1,  \beta=2$ and $\gamma$ as specified. We use $T \in \{R, B\}$ for an arbitrary type.}
\label{tab:cylinders}
\begin{minipage}[t]{0.45\textwidth}
\vspace{0pt}
\centering
\begin{tabular}{c|c|c}
  $C_D$ & $\longtilde{C}^{1}_{\tilde{I}}$ 
 & $\longtilde{C}^{2}_{\tilde{I}}$  \\[2pt]
\hline\rule{0pt}{2.5ex} 
$(T,\ast, \ast)$& $(T,\ast,R)$ & $(\ast,T,B)$ \\[2pt]
$ (T,T, \ast)$& $(T,T,R)$ & $(\ast,T,B)$ \\[2pt]
$(B,\ast, B)$& $\varnothing$ &$(\ast,B,B)$ \\[2pt]
$(B,B, B)$& $\varnothing$ &$(\ast, B, B)$\\[2pt]
$(B,R, B)$& $\varnothing$ &$\varnothing$ \\[2pt]
$(B,R, \ast)$& $(B, R, R)$ &$\varnothing$ \\[2pt]
$(R,B, \ast)$& $(R, B, R)$ &$\varnothing$ \\[2pt]
$(R,\ast, B)$& $\varnothing$ &$(\ast, R, B)$ \\[2pt]
$(R,R, B)$& $\varnothing$ &$(\ast, R, B)$ \\[2pt]
$(R,B, B)$&$\varnothing$ &$\varnothing$ 
\end{tabular}
\end{minipage}
\hfill
\begin{minipage}[t]{0.45\textwidth}
\vspace{0pt}
\centering
\begin{tabular}{c|c|c}
$C_D, \gamma=\alpha$ & $\longtilde{C}^1_{\tilde{I}}$ 
 & $\longtilde{C}^2_{\tilde{I}}$   \\[2pt]
\hline\rule{0pt}{2.5ex} 
$(R,R)$& $(B,R)$ & $(R,R)$ \\[2pt]
$(R,\ast)$& $(B,R)$ & $(R,\ast)$ \\[2pt]
$(B,\ast)$& $(B,B)$ & $\varnothing$ \\[2pt]
$(B, R)$& $\varnothing$ & $\varnothing$ \\[2pt]
\end{tabular} 
\par\vspace{0.4cm}
\begin{tabular}{c|c|c}
$C_D, \gamma=\beta$ & $\longtilde{C}^1_{\tilde{I}}$ 
 & $\longtilde{C}^2_{\tilde{I}}$  \\[2pt]
\hline\rule{0pt}{2.5ex} 
$(R,B)$& $\varnothing$ & $\varnothing$ \\[2pt]
$(B,B)$& $(\ast,B)$ & $\varnothing$ \\[2pt]
$(R,\ast)$& $\varnothing$ & $(R,R)$ \\[2pt]
$(B,\ast)$& $(\ast,B)$ & $(B,R)$ \\[2pt]
\end{tabular}
\end{minipage}
\end{table}

Having described how the different events in the AIG act at the level of cylinders, we now return to the set of compatible configurations and introduce a dynamical version of $\Ss(\aigt, S)$.

\begin{definition}[The configuration process]\label{def.ordered_full_info}
Let $\gs_0\subseteq[N]$ and fix a root-type configuration $S\in\varSigma_{\gs_0}^R$. The \emph{configuration process}, denoted by
$(\Theta_t(S))_{t\geq 0}$, is constructed on the basis of the transitions of $(\As_{[0,t]}(\gs_0))_{t\geq 0}$ as follows.
The process starts at
$\Theta_0(S)= S$. Suppose that at some time the state of the process is the finite collection of cylinders
$\{C^{(i)}\}_{i=1}^r$ for some $r>0$.
\begin{enumerate}
\item At the times of a coalescence or a relocation event $\coal{\alpha}{\beta}$, every cylinder $C^{(i)}$, $i \in [r]$, performs the transition $$C^{(i)}\mapsto\ocoal{\alpha}{\beta}(C^{(i)}),$$
and the new state of the process is the collection $\{\ocoal{\alpha}{\beta}(C^{(i)})\}_{i=1}^r$.
\item At the times of a deleterious mutation $\timesa$, every cylinder $C^{(i)}$, $i \in [r]$, performs the transition $$C^{(i)}\mapsto\omutr{\alpha}(C^{(i)}),$$
and the new state of the process is the collection$\{ \omutr{\alpha}(C^{(i)})\}_{i=1}^r$. 
\item At the times of a beneficial mutation $\circa$, every cylinder $C^{(i)}$, $i \in [r]$, performs the transition $$C^{(i)}\mapsto \omutb{\alpha}(C^{(i)}),$$
and the new state of the process is the collection $\{\omutb{\alpha}(C^{(i)})\}_{i=1}^r$. 
\item At the times of an FTW event $\ftw{\alpha}{\Bs}$, every cylinder $C^{(i)}$, $i \in [r]$, performs the transition $$C^{(i)}\mapsto\oftw{\alpha}{\Bs}(C^{(i)}),$$
and the new state of the process is the collection $ \bigcup_{i=1 }^r \oftw{\alpha}{\Bs}(C^{(i)})$.
\item At the times of an interactive neutral event $\ine{\alpha}{\beta}{\gamma}$, every cylinder $C^{(i)}$, $i \in [r]$, performs the transition
$$C^{(i)}\mapsto\onint{\alpha}{\beta}{\gamma}(C^{(i)}),$$
and the new state of the process is the collection $\bigcup_{i=1 }^r  \onint{\alpha}{\beta}{\gamma}(C^{(i)})$.
\end{enumerate}
By construction, we have
\begin{equation}\label{eq.compatible_union}
\Ss\big(\As_{[0,t]}(\gs_0), S\big)=\bigcup_{C\in\Theta_t(S)} C.    
\end{equation}
\end{definition}

With the configuration process and with $\Gamma^{(i)}$ as in Lemma~\ref{prop_average1}, we can express a main object of our interest, namely the probability for the random configuration $\Gamma^{(i)}$ to be compatible with an AIG $\As_{[0,t]}(\gs_0)$ and an initial sample $S$, as 
\begin{equation}\label{eq.connection_config_compat}
\Pb\big(\Gamma^{(i)}\in \Ss(\As_{[0,t]}(\gs_0), S)\big)=\Pb\big(\Gamma^{(i)}\in \Theta_t(S)\big).   
\end{equation}

In the above definition, we introduced the configuration process via the AIG and hence as function of $\sP, \, \gs_0$, and $S$. We will sometimes also make the dependence on $\sP$ explicit and write $\Theta_t(S, \sP)$. Moreover, it is clear from the construction that it can equivalently be defined as an autonomous continuous-time Markov chain on finite collections of cylinders. The transition rates can be read off directly from the rates of the corresponding transitions in the AIG.

While the transitions at the level of cylinders have been established, their connection to the factorial dual remains elusive. Indeed, we have seen that an $R$-cylinder can transform into a cylinder containing a $B$, which implies that unfit potential influencers can be compatible with an entirely unfit sample. As is standard for models of this type, one might be tempted to conclude that the forward evolution does not admit a factorial dual; however, as already stated in Theorem~\ref{thm.factorial_duality}, this conclusion is not correct. The next section explains 
how the configuration process is indeed connected to the factorial moment duality.

\section{The path to the factorial duality}\label{sec:our_approach}
This section starts with a heuristic motivation concerning the connection between the natural ancestry and the factorial moment dual. In several steps, these heuristics are made rigorous and the factorial moment duality is proven via the Frankenstein process. 

\subsection{Heuristics}
Recall from the introduction (and from \cite{BEH23}) that, in the case $\kappa=0$, the sample at the root has an entirely unfit configuration if and only if all potential ancestors (at any given time $t$ before the present) are unfit. This allows to express the set of compatible configurations $\Ss(\aigt, R_{\gs_0})$ (and hence the configuration process) in terms of the process $(Z^{(N)}_t)_{t \geq 0}$, where $Z^{(N)}_t$ is the \emph{number} of potential ancestors at time $t$ before the present, and gives rise to the factorial moment duality in Theorem~\ref{thm.factorial_duality}. As soon as there are interactive neutral events, however, 
we observe that an entirely unfit configuration at the root is no longer equivalent to an entirely unfit sample at the leaves; rather, such events may make fit ($\mathrm{b}$) leaves compatible with an unfit ($\mathrm{r}$) root, see Figures~\ref{fig.Rcilinders} and~\ref{fig.heuristics}. To connect these situations to a counting process nevertheless, let us first consider a ternary branching event that happens to a single unfit individual, so $\Theta_0=R$ (Figure~\ref{fig.heuristics}, left). At some time $t$ just after the event, we have $\Theta_t=\{R*R,*R\,B\}$. But, due to exchangeability, the probability to obtain this set of configurations when sampling without replacement from a population whose proportion of unfit individuals at backward time $t$ is $x$ is
\[
\Pb_x(\Theta_t) = \Pb_x(R*R\cup *R\,B) = \Pb_x(R*R \cup R*B) = \Pb_x(R**) = \Pb_x(R),
\]
so by permuting and uniting the configurations, we get back to the probability of sampling a single unfit individual. So, in probability, the event is silent, and, in particular, we got rid of the `disturbing' $B$. 

For pairwise and binary branching, however, permuting and uniting alone do not suffice; rather, we must also combine configurations across different events. Consider, for example, the pairwise branchings in Figure~\ref{fig.heuristics} (middle and right). Here, the sample consists of the continuing line together with the checking line (middle) and the incoming line (right), respectively. Note that these events are different realizations of pairwise branching, but they happen at the same rate. Since our factorial moment duality is a property in expectation rather than a pathwise one, we may move one of the compatible configurations (namely $*RB$) from the rightmost picture over to the middle one and still get the same probability for the all-unfit ($RR$) sample. Namely, averaging over both realizations, we get
\[
   \Pb_x(\Theta_t) = \frac{1}{2} [\Pb_x (RRR) +  \Pb_x(*RB \cup R*R)] = \frac{1}{2} [\Pb_x (RRR) +  \Pb_x(R**)].
\]
This suggests that, instead of considering the two realizations of the AIG, we may consider a process that has, for this pairwise branching event, moved from $RR$ to $RRR$ or to $R**$, each with probability $1/2$.

The conjecture now is that this continues in further branching events, that is, that we may combine the configurations resulting from collections of AIG realizations into collections of configurations that consist of $R$s and $*$s only.
The goal of the remainder of the paper is to show that this is indeed the case; not only for ternary and pairwise, but also for binary branching and collisions (so for all events in the context of interactive neutral reproduction), and also in combination with all other transitions present in the model. This is by no means immediate -- after all, it implies that we may sample after every interactive branching event and then merge the outcomes across multiple AIGs, whereas in the true AIG, the sampling and merging occurs only once, at the end of the observation interval. 

\begin{figure}[htbp]
\begin{tikzpicture}[scale=0.7]
\begin{scope}[shift={(0,0)}]

  \draw[semithick] (0.4,2) -- (4.5,2); 
  \draw[semithick] (0.4,1) -- (2.5,1); 
  \draw[semithick] (0.4,0) -- (2.5,0); 

  \draw[-{triangle 45[scale=5]}, semithick] (2.5,1) -- (2.5,2); 
  \draw[-{Diamond[open, scale=1.3]}, semithick, dashed] (2.5,1) -- (2.5,0); 

  \node at (4.9,2) { $R$};

  \node at (0,2) {$\ast$};
  \node at (-0.7,2)   { $R$};
  \node at (0,1){ $R$};
  \node at (-0.7,1)   {$\ast$};
  \node at (0,0){$B$};
  \node at (-0.7,0)   { $R$};

  \foreach \x in {0,-0.7} {
    \node[draw, semithick, minimum width=0.4cm, minimum height=2cm, anchor=center] at (\x,1) {};
  }

\end{scope}

\begin{scope}[shift={(6.3,0)}] 
  \draw[semithick] (0.4,2) -- (4.5,2); 
  \draw[semithick] (0.4,1) -- (2.5,1); 
  \draw[semithick] (0.4,0) -- (4.5,0); 

  \draw[-{triangle 45[scale=5]}, semithick] (2.5,1) -- (2.5,2);
  \draw[-{Diamond[open, scale=1.3]}, semithick, dashed] (2.5,1) -- (2.5,0);

  \node at (4.9,0) { $R$};
  \node at (4.9,2) { $R$};

  \node at (0,2)   { $R$};
  \node at (0,1)   { $\ast$};
  \node at (0,0)   { $R$};

  \foreach \x in {0} {
    \node[draw, semithick, minimum width=0.4cm, minimum height=2cm, anchor=center] at (\x,1) {};
  }

\end{scope}

\begin{scope}[shift={(13.3,0)}]  

  \draw[semithick] (0.4,2) -- (4.5,2); 
  \draw[semithick] (0.4,1) -- (4.5,1); 
  \draw[semithick] (0.4,0) -- (2.5,0); 

  \draw[-{triangle 45[scale=5]}, semithick] (2.5,1) -- (2.5,2);
  \draw[-{Diamond[open, scale=1.3]}, semithick, dashed] (2.5,1) -- (2.5,0);

  \node at (4.9,1) { $R$};
  \node at (4.9,2) { $R$};

  \node at (-0.7,2) {$\ast$};
  \node at (0,2)   { $R$};
  \node at (-0.7,1){ $R$};
  \node at (0,1)   { $R$};
  \node at (-0.7,0){$B$};
  \node at (0,0)   { $R$};

  \foreach \x in {0,-0.7} {
    \node[draw, semithick, minimum width=0.4cm, minimum height=2cm, anchor=center] at (\x,1) {};
  } 
\end{scope}

\draw[decorate,decoration={brace,amplitude=6pt,mirror},yshift=-5pt]
  (-1,-0.5) -- (0.3,-0.5) node[midway,yshift=-12pt] {$(R \ast \ast)$};

\draw[decorate,decoration={brace,amplitude=6pt,mirror},yshift=-5pt]
  (6.0,-0.5) -- (12.9,-0.5) node[midway,yshift=-12pt] {$(R \ast \ast)$};

\end{tikzpicture}
\caption{Transforming the configuration process into the Frankenstein process.}
\label{fig.heuristics}
\end{figure}

Because these combined configurations come from different worlds (realizations) that would not normally interact, we call the resulting construction the \emph{Frankenstein process}, denoted by $(\Phi_t)_{t \geq 0}$. Crucially, the Frankenstein process consists of a single cylinder and its $R$-counting process has the same distribution as $(Z^{(N)}_t)_{t \geq 0}$, the factorial moment dual.

The main challenge will be to rigorously link the configuration process to the Frankenstein process. We will show that the probability of sampling from the Frankenstein process equals that of sampling from the configuration process at all times: $ \Pb_x(\Phi_t)=\Pb_x(\Theta_t)$ for all $t \geq 0$. To this end, we introduce the \emph{messy process} $(\Psi_t)_{t \geq 0}$, which keeps track of the reshuffling of the configurations between realizations as needed for the combination into the Frankenstein process, while keeping the configurations separate. The messy process can then be connected to the Frankenstein process through exchangeability and to the configuration process through a recursive argument.

\subsection{The quasi-AIG}
This section aims to profit from randomness by considering a process similar to the AIG, but in which some information is removed from $\sP(\gs_0)$. Recall that the interactive event $\ine{\alpha}{\beta}{\gamma}$ involves the continuing line $\alpha$, the incoming line $\beta\neq \alpha$, and the checking line $\gamma$ (since the lines need not be distinct, a single line may play more than one role). We now remove the information about the roles of $\beta$ and $\gamma$ from the AIG. This allows us to couple different realizations of the AIG that differ only in the roles assigned to the lines in interactive neutral events. Such a coupling opens the way to mixing information from different AIGs, as suggested by the heuristics presented in the previous section.
\begin{definition}[quasi-AIG]\label{def.quasiAIG}
Consider the following set of symbols:
\begin{equation*}
    M^- \coloneqq \Big\{
        \narrowab,\ \sarrowab,\ \carrowcbasetb,\ \circa,\ \timesa
        : \alpha, \beta, \gamma \in [N],\beta \neq \alpha, \gamma\leq \beta,\, \{\alpha\} \neq \Bs\subseteq [N]
    \Big\}.
\end{equation*}
Define a function $\rho \colon M \to M^-$ by setting $\rho(m)=m$ for all $m \in M$, except for elements of the form $\ine{\alpha}{\beta}{\gamma}$, for which we set
\begin{equation}\label{eq.rho_map}
    \rho(\ine{\alpha}{\beta}{\gamma})\coloneqq 
    \begin{cases}
     \ineminus{\alpha}{\beta_0}{\gamma_0} \text{ with } \beta_0 = \beta \wedge \gamma \text{ and }
\gamma_0 = \beta \vee \gamma  \quad & \text{for } \gamma \notin \{\alpha, \beta\},  \\
    \ineminus{\alpha}{\beta}{\beta}  & \text{for } \gamma \in \{ \alpha,  \beta\}.
    \end{cases}
\end{equation}

The Poisson point process $\sP^{-}$ is obtained from $\sP$ via the mapping
$(t,m) \mapsto (t,\rho(m))$. For a given sample $\gs_0\subseteq[N]$, the thinning
$\sP^{-}(\gs_0)$ is defined based on $\sP^{-}$ in complete analogy with the construction
of $\sP(\gs_0)$ based on $\sP$. The process $\Gs^{-}(\gs_0)$ is defined analogously to $\Gs(\gs_0)$ from Def.~\ref{def.AIG}. The quasi-AIG is then defined as
\[
\As^{-}(\gs_0) \coloneqq \bigl(\Gs^{-}(\gs_0), \sP^{-}(\gs_0)\bigr).
\]
The quantities $\sT^{-}$ and $\As^{-}_{[0,t]}(\gs_0)$ are then analogous to
$\sT$ and $\As_{[0,t]}(\gs_0)$.
\end{definition}
If we are given a realization of the quasi-AIG, then, in order to recover an AIG,
we must reintroduce the information associated with each interactive neutral event
$\ineminus{\alpha}{\beta}{\gamma}$, $\beta \leq \gamma$, concerning the roles of $\beta$ and
$\gamma$ in an interactive branching event in the AIG. Due to the mapping \eqref{eq.rho_map}, this information was lost; in particular, $\beta$ and $\gamma$ have different meanings in $\ine{\alpha}{\beta}{\gamma}$ and $\ineminus{\alpha}{\beta}{\gamma}$. To this end, we make the following identification. 
\begin{equation}\label{eq.interactive.arrow}
\begin{split}
\text{For } \;   \beta < \gamma, \qquad &(\ineminus{\alpha}{\beta}{\gamma},\downarrow)\equiv \ine{\alpha}{\beta}{\gamma} 
\quad\text{and}\quad
(\ineminus{\alpha}{\beta}{\gamma},\uparrow)\equiv \ine{\alpha}{\gamma}{\beta}, \\[3mm]
\text{and \, for } \; \gamma=\beta, \qquad &
(\ineminus{\alpha}{\beta}{\beta},\downarrow)\equiv \ine{\alpha}{\beta}{\beta}
\quad\text{and}\quad
(\ineminus{\alpha}{\beta}{\beta},\uparrow)\equiv \ine{\alpha}{\beta}{\alpha}.
\end{split}
\end{equation}
Note that the cases $\ineminus{\alpha}{\beta}{\alpha}$ and $\ineminus{\alpha}{\alpha}{\beta}$ do not occur, because, due to our global assumption $\beta \neq \alpha$ in $\ine{\alpha}{\beta}{\gamma}$, the events in question are not in the image of the mapping \eqref{eq.rho_map}.
With the above identification, given $\bs{v}=(v_i)_{i\in\Nb}$, $v_i \in \{\downarrow, \uparrow\}$, we can associate to each realization
$a^{-}$ of a quasi-AIG a realization $a \coloneqq (a^{-},\bs{v})$ of an AIG by using, at the $k$th interactive neutral event $\ineminus{\alpha}{\beta}{\gamma}$ in
$a^{-}$, the interactive event $(\ineminus{\alpha}{\beta}{\gamma},v_k)$ in $a$.

\begin{definition}[The AIG on paths]\label{def.aigonpath}
For any $\bs{v}=(v_i)_{i\in\Nb}\in\{\downarrow,\uparrow\}^\Nb$ and $\gs_0\subseteq[N]$, we refer to $(\As_{[0,t]}^{-}(\gs_0),\bs{v})_{t\geq 0}$ as the AIG process on the path $\bs{v}$ (associated with the sample $\gs_0$). 
\end{definition}
Let $V=(V_i)_{i\in\Nb}$ denote an iid sequence
of Bernoulli-type random variables taking values in $\{\downarrow,\uparrow\}$, each with probability $1/2$. Since, by \eqref{eq.Poisson_measure} and \eqref{eq.rho_map}, the events $\ineminus{\alpha}{\beta}{\gamma}$ ($\alpha \neq \beta \leq \gamma \neq \alpha$) have rates $\kappa/N^2$ each in the quasi-AIG, \eqref{eq.interactive.arrow} implies  the rate $\kappa/(2 N^2)$ for each of the events $\ine{\alpha}{\beta}{\gamma}$ ($\beta \neq \alpha$) in the quasi-AIG on the random path $V$, in agreement with the Poisson measure driving the (original) AIG. So
\begin{equation}\label{random_quasi}
\bigl(\As^{-}_{[0,t]}(\gs_0),V\bigr)_{t\geq 0}
\overset{(d)}{=}
\bigl(\As_{[0,t]}(\gs_0)\bigr)_{t\geq 0},
\end{equation}
and, as a consequence, we obtain the following averaging principle, which relates our main quantity of interest, the set of compatible configurations with respect to the AIG, to the corresponding sets of the quasi-AIG.

\begin{lemma}[Averaging principle 1]\label{av1B}
Let $\gs_0 \subseteq [N]$ and let $S\in \varSigma_{\gs_0}$ be a root-type configuration. Let $M_t$ denote the number of interactive neutral events in the quasi-AIG $\As^{-}_{[0,t]}(\gs_0)$ and let $\Gamma^{(i)}\in \{r,b\}^N$ be chosen uniformly across all configurations with exactly $i$ entries equal to $\mathrm{r}$ and $N-i$ equal to $\mathrm{b}$, as in Lemma \ref{pre-duality}. Then, for any $t \ge 0$, we have
\begin{align*}
\Pb\big(\Gamma^{(i)}\in\Ss(\As_{[0,t]}(\gs_0), S)\big)
=\Eb\bigg[
\frac{1}{2^{M_t}}
\sum_{\bs v \in \{\downarrow,\uparrow\}^{M_t}}
\Pb\Big(
\Gamma^{(i)} \in \Ss\big((\As^{-}_{[0,t]}(\gs_0), \bs v ), S\big)
\,\mid\, \sP^-
\Big)
\bigg].
\end{align*}
\end{lemma}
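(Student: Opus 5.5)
The plan is to obtain the identity by conditioning on the quasi-AIG and then averaging over the independent role labels. We use the distributional identity \eqref{random_quasi}, which says that the AIG process equals in law the quasi-AIG process decorated by an iid fair sequence $V$ of arrows. Concretely, we realize the AIG on a common probability space as $\As_{[0,t]}(\gs_0)=(\As^{-}_{[0,t]}(\gs_0),V)$, with $V$ independent of $\sP^-$ and of $\Gamma^{(i)}$. The left-hand side depends only on the joint law of $\As_{[0,t]}(\gs_0)$ and the independent $\Gamma^{(i)}$, so replacing the AIG by this equal-in-law construction does not change it.

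Before using \eqref{random_quasi}, we check it in slightly more detail than in the text. The map $\rho$ sends each interactive event to an unordered datum $\ineminus{\alpha}{\beta_0}{\gamma_0}$. The preimage consists of exactly two elements of $M$, each of rate $\kappa/(2N^2)$, except in degenerate cases. For $\gamma\notin\{\alpha,\beta\}$ the two preimages are $\ine{\alpha}{\beta}{\gamma}$ and $\ine{\alpha}{\gamma}{\beta}$. For $\gamma\in\{\alpha,\beta\}$ they are $\ine{\alpha}{\beta}{\beta}$ and $\ine{\alpha}{\beta}{\alpha}$. By the thinning and marking theorem for Poisson processes, assigning independent fair marks $\downarrow,\uparrow$ to the points of $\sP^-$ reproduces $\sP$ in law via \eqref{eq.interactive.arrow}. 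Moreover, the set of lines $\Gs^-$ coincides pathwise with $\Gs$, because the update $\Gs\cup\{\beta,\gamma\}$ is symmetric in $\beta,\gamma$. Hence the $k$th interactive event of the quasi-AIG is resolved by $V_k$, and the AIG on path $V$ has the law of the AIG.

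We then condition on $\sP^-$. Given $\sP^-$ (restricted to $[0,t]$ backward), $M_t$ is determined, and only $V_1,\dots,V_{M_t}$ affect $\Ss((\As^-_{[0,t]},V),S)$, since later marks are never used. The vector $(V_1,\dots,V_{M_t})$ is uniform on $\{\downarrow,\uparrow\}^{M_t}$ and independent of $\sP^-$ and $\Gamma^{(i)}$. The tower property therefore gives
\[
\Pb\bigl(\Gamma^{(i)}\in\Ss(\As_{[0,t]},S)\bigr)=\Eb\Big[\Pb\big(\Gamma^{(i)}\in \Ss((\As^-_{[0,t]},V),S)\mid \sP^-\big)\Big],
\]
and expanding the inner probability over the $2^{M_t}$ equally likely values $\bs v$ yields the stated formula.

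The only delicate point is making the coupling rigorous. Specifically, $\Ss$ must be a measurable function of the path-decorated quasi-AIG that agrees with its value on the true AIG under the identification \eqref{eq.interactive.arrow}, and $M_t<\infty$ almost surely, so the finite sum is well defined. Finiteness holds because the rates are bounded on the finite state space $[N]$. The marking step is the main obstacle, but it is standard Poisson marking.
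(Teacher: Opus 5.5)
Your proposal is correct and follows the paper's route. You invoke \eqref{random_quasi}, condition on the quasi-AIG (equivalently on $\sP^-$), and apply the tower property to average over the $2^{M_t}$ equally likely role assignments. Your Poisson-marking check of \eqref{random_quasi} and your observation that $\Gs^-$ coincides pathwise with $\Gs$ simply spell out what the paper sketches in the paragraph preceding the lemma.
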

\begin{proof}
The proof follows by applying~\eqref{random_quasi}, conditioning on
$\As^{-}_{[0,t]}(\gs_0)$, and using the tower property of the conditional expectation.
\end{proof}

Let us now translate the above ideas to the level of cylinders. If we consider transitions of the quasi-AIG, as we will do, their effects on the compatible cylinders is the same as in Section~\ref{subsec:tracing_back}, except for interactive neutral events. In this case, we need to decide the roles of the lines $\beta$ and $\gamma$, which we do, once more, with the help of $v\in\{\downarrow,\uparrow\}$. For this, we consider $v$ and an interactive neutral event $\ineminus{\alpha}{\beta}{\gamma}$ involving the line $\alpha\in I$ and the lines $\beta\in [\gamma],\gamma \in [N]$ and define 
\[
C_I \mapsto \onintminus{\alpha}{\beta}{\gamma}^{v}(C_I)= \big\{\onintminus{\alpha}{\beta}{\gamma}^{(v,i)}(C_I)\big\}_{i\in\{1,2\}}
\]
with
\[
    \onintminus{\alpha}{\beta}{\gamma}^{(\downarrow, i)}(C_I) \coloneqq \begin{cases} \onint{\alpha}{\beta}{\gamma}^{i}(C_I), & \beta<\gamma  \\
    \onint{\alpha}{\beta}{\alpha}^{i}(C_I), & \beta=\gamma \end{cases}
    \quad \text{and} \quad 
     \onintminus{\alpha}{\beta}{\gamma}^{(\uparrow, i)}(C_I) \coloneqq \begin{cases} \onint{\alpha}{\gamma}{\beta}^i(C_I), & \beta < \gamma \\
    \onint{\alpha}{\beta}{\beta}^i(C_I), & \beta=\gamma \end{cases}
\]
for $i \in \{1,2\}$,
thus combining the operators $\textrm{I}$ from \eqref{eq.nint1} -- \eqref{eq.nint3} with the identifications in \eqref{eq.interactive.arrow}. 
Note that, for $\gamma \notin \{\alpha, \beta\}$, the operator $\onint{\alpha}{\gamma}{\beta}^i$ affects the same lines as the operator $\onint{\alpha}{\beta}{\gamma}^i$, but interchanges the roles of the incoming and the checking line; we can explicitly write $\onint{\alpha}{\gamma}{\beta}^i(C_I)=\sigma(\onint{\alpha}{\beta}{\gamma}^i(\sigma(C_I))$, where $\sigma$ is the permutation that interchanges the positions $\beta$ and $\gamma$. With these transitions in mind, we will now, analogously to Def.~\ref{def.ordered_full_info}, define a configuration process on a path $\bs{v}$.

\begin{definition}[The configuration process on paths]\label{def.ordered}
Let $\bs{v}$ and $\sP^-$ be as in Def.~\ref{def.quasiAIG}. Let $(T_k)_{k\geq 1}$ denote the times of interactive neutral events, ordered increasingly. Let $\gs_0\subseteq[N]$ and fix a root-type configuration $S\in\varSigma_{\gs_0}^R$. The \emph{configuration process on the path $\bs{v}$}, denoted by
$(\overline{\Theta}_t(S,\bs{v},\sP^-))_{t\geq 0}$, is defined recursively as follows. The process starts at
$\overline{\Theta}_0(S,\bs{v}) = S$.
Suppose that at some time the state of the process is a finite collection of cylinders
$\{C^{(i)}\}_{i=1}^r$.
\begin{enumerate}
\item At the times of the events $\coal{\alpha}{\beta}$, $\timesa$, $\circa$, and $\ftw{\alpha}{\Bs}$, the configuration process on the path $\bs{v}$ experiences the same transitions as the configuration process from Def.~\ref{def.ordered_full_info}. 
\item If the $k$th interactive neutral event is $\ineminus{\alpha}{\beta}{\gamma}$, then at time $T_k$, every cylinder $C^{(i)}$, $i \in [r]$, performs the transition
$$C^{(i)}\mapsto\onintminus{\alpha}{\beta}{\gamma}^{\,v_k}(C^{(i)}),$$
and the new state of the process is $\bigcup_{i=1 }^r \onintminus{\alpha}{\beta}{\gamma}^{\,v_k}(C^{(i)})$.
\end{enumerate}
By construction, and in analogy with Eq.~\eqref{eq.compatible_union}, we have
\begin{equation*}
    \Ss\big((\As_{[0,t]}^{-}(\gs_0),\bs{v}), S\big) = \bigcup_{C\in\overline{\Theta}_t(S,\bs{v}, \sP^-)}C.
\end{equation*}
\end{definition}
The averaging principle $1$ from Lemma~\ref{av1B} yields
\begin{equation}\label{av1C}
\Pb\big(\Gamma^{(i)}\in \Theta_t(S, \sP)\big) =\Eb \bigg[\frac{1}{2^{M_t}} \sum_{\bs v \in \{\downarrow,\uparrow\}^{M_t}} \Pb\big(\Gamma^{(i)}\in \overline{\Theta}_t(S,\bs{v}, \sP^-)\mid \sP^-\big)\bigg].   
\end{equation}
Notably, \eqref{av1C} can also be proved without Lemma~\ref{av1B}, since for 
$V=(V_i)_{i\in\Nb}$ as above Eq.~\eqref{random_quasi}, Def.~\ref{def.ordered} implies
$$({\Theta}_t(S))_{t\geq 0}\overset{(d)}{=}(\overline{\Theta}_t(V,S))_{t\geq 0}.$$

As for the configuration process, it follows from the above construction that the configuration process on paths admits an equivalent description as an autonomous continuous-time Markov chain on finite collections of cylinders. Its transition rates are directly inherited from the corresponding transitions in the quasi-AIG.

\subsection{The Frankenstein matching}
This section starts with a few definitions, which will pave the way to the formal introduction of the Frankenstein process. From here onward, we will assume $S\in \varSigma^R$
and often work with $S=R_I$. Let $\overline{\varSigma}$ be the set of all finite disjoint unions of elements of $\varSigma$
(with the empty set understood as the empty union of cylinders).
\begin{definition}[Additive and $R$-preserving transitions]
A transition $\pi \colon \overline{\varSigma} \to \overline{\varSigma}$ is called 
\emph{additive} if $\pi(\varnothing) = \varnothing$ and, for any pair $A,B$ of disjoint subsets of $\varSigma$,
\[
\pi(A \cupdot B) = \pi(A) \cupdot \pi(B).
\]
An additive transition $\pi$ is called \emph{$R$-preserving} if
\[
\pi(\varSigma^R) \subseteq \varSigma^R.
\]
\end{definition}

Let us now focus, for a moment, on the case in which interactive neutral events are absent. Figures~\ref{fig.dotted},~\ref{fig.mutations}, and~\ref{fig.FTW_new} illustrate the local effects of coalescence, mutation, and FTW selection on type configurations along the lines of the AIG. In particular, one observes that these transitions are $R$-preserving and single-valued when restricted to $\Sigma^R$ (i.e. one $R$-cylinder is mapped to exactly one $R$-cylinder). It is part of the folklore that this property is closely related to factorial and moment dualities, as will be made explicit in the next result. 
\begin{corollary}\label{cor.R-preserving}
Let $\mathcal{A}_{[0,t]}(\gs_0)$ be an AIG with root set $\gs_0$ and leaf set $\ell$, and without interactive neutral events. Let $\Gamma^{(i)}$ be as in Lemma~\ref{pre-duality}. Then $\mathcal{S}(\mathcal{A}_{[0,t]}(\gs_0),R_{\gs_0})=\{C\}$ for some $C\in\Sigma_{\ell}^R$, and
    \begin{equation*}
       \Pb\big(\Gamma^{(i)}\in \mathcal{S}(\mathcal{A}_{[0,t]}(\gs_0),R_{\gs_0}) \big)=\Pb\big(\Gamma^{(i)}=C \big)=\frac{i^{\underline{n_R(C)}}}{N^{\underline{n_R(C)}}}.
    \end{equation*}
\end{corollary}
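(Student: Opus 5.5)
We argue by induction over the finitely many transitions of $\mathcal{A}_{[0,t]}(\gs_0)$, using the configuration process $\Theta_t(R_{\gs_0})$ of Def.~\ref{def.ordered_full_info} and the identity \eqref{eq.compatible_union}. Since there are no interactive neutral events, only the operators $\ocoal{\alpha}{\beta}$, $\omutr{\alpha}$, $\omutb{\alpha}$ and $\oftw{\alpha}{\Bs}$ occur. The key claim is that each of them maps an $R$-cylinder to a single $R$-cylinder, where $\varnothing$ counts as a degenerate cylinder. Starting from $\Theta_0=\{R_{\gs_0}\}$, induction over the event times in $[0,t]$ then gives $\Theta_t=\{C\}$ with $C\in\varSigma_\ell^R$. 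By \eqref{eq.compatible_union}, this is exactly $\mathcal{S}(\mathcal{A}_{[0,t]}(\gs_0),R_{\gs_0})=\{C\}$.

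To verify the claim, let $C_I$ be an $R$-cylinder, so every coordinate lies in $\{R,\ast\}$.

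For $\ocoal{\alpha}{\beta}$, \eqref{eq.transition_coalB} only intersects coordinates from $\{R,\ast\}$, so the result again has coordinates in $\{R,\ast\}$.

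For $\omutr{\alpha}$, since $C_\alpha\cap R\neq\varnothing$, \eqref{eq.outcome_mutation_coalR} sets $\longtilde C_\alpha=\ast$.

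For $\omutb{\alpha}$, the outcome \eqref{eq.outcome_mutation_coalB} is $\ast$ if $C_\alpha=\ast$ and $\varnothing$ if $C_\alpha=R$. Both are admissible, the latter being the empty cylinder, which contributes probability $0$. We read $n_R(\varnothing)$ accordingly, or equivalently treat this as the cemetery.

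For $\oftw{\alpha}{\Bs}$, we have $C_\alpha\neq B$, so only case \eqref{eq.selR} applies. It yields a single cylinder whose coordinates are $C_\alpha\cap C^\ast_\ell\in\{R,\ast\}$. The multi-cylinder case \eqref{eq.selB1} requires $C_\alpha=B$ and therefore never arises.

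This establishes $R$-preservation and single-valuedness.

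For the probability identity, we apply Lemma~\ref{prop_average1} conditionally on the AIG, or directly. The event $\Gamma^{(i)}\in C$ means that the restriction of $\Gamma^{(i)}$ to the $n_R(C)$ coordinates marked $R$ is entirely $\mathrm{r}$, with no constraint on the $\ast$ coordinates. Since $\Gamma^{(i)}$ is a uniform arrangement of $i$ reds among $N$ positions, this is a hypergeometric all-red draw of $n_R(C)$ positions. Its probability is $i^{\underline{n_R(C)}}/N^{\underline{n_R(C)}}$, obtained by summing the product formula of Lemma~\ref{prop_average1} over the free $\ast$ coordinates. We interpret $\Pb(\Gamma^{(i)}=C)$ in the statement as $\Pb(\Gamma^{(i)}\in C)$. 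Independence of $\Gamma^{(i)}$ from $\sP$ makes this valid for the realized $C$.

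The only delicate point is bookkeeping. We must check that every case split in the FTW and mutation operators is exhausted by $C_\alpha\in\{R,\ast\}$. We must also handle the empty cylinder consistently, with probability $0$ corresponding to a $\Delta$-type state. Neither step is a real obstacle.
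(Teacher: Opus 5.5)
Your proposal is correct and follows essentially the same route as the paper. You check that coalescence/relocation, both mutation operators, and FTW selection (which reduces to \eqref{eq.selR} since $C_\alpha\neq B$) each map an $R$-cylinder to a single $R$-cylinder, iterate over the events of the AIG, and read off the hypergeometric probability via Lemma~\ref{prop_average1}. Your explicit handling of the empty cylinder produced by a beneficial mutation on an $R$-coordinate, read as the cemetery state with probability $0$, correctly fills in a detail the paper's statement glosses over.
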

\begin{proof}
As mentioned above, the fact that coalescence, mutation, and FTW selection are single-valued (when restricted to $\Sigma^R$) and $R$-preserving follows directly from the description of the underlying transitions in Section~\ref{subsec:tracing_back} (see Figures~\ref{fig.dotted}, \ref{fig.mutations}, and~\ref{fig.FTW_new}). By iterating this fact over the events present in $\mathcal{A}_{[0,t]}(\gs_0)$, we infer that $\mathcal{S}(\mathcal{A}_{[0,t]}(\gs_0),R_{\gs_0})$ consists of a single $R$-cylinder. This establishes the first claim. The second statement follows from the first one as a direct consequence of Lemma~\ref{prop_average1}.
\end{proof}

It is straightforward to verify that, in the absence of interactive neutral events, the $R$-counting process $(n_R(C_t))_{t \ge 0}$ defined by $\Ss(\As_t(\gs_0), R_{\gs_0}) = \{C_t\}$ is itself Markovian. The above result, together with Lemma~\ref{pre-duality}, implies that $X^{(N)}$ and the process $(n_R(C_t))_{t \ge 0}$ are in factorial duality. Moreover, one readily checks that the transition rates of $(n_R(C_t))_{t \ge 0}$ coincide with those of the process $Z^{(N)}$ in Def.~\ref{def.factorial_dual}. 
But as we have seen in the previous section (specifically in Fig.~\ref{fig.Rcilinders}), interactive neutral events are typically not $R$-preserving. That is, even if the configuration process starts from an $R$-cylinder,
after the first interactive neutral event the collection of compatible
cylinders may consist of more than one cylinder and include cylinders with $B$'s. This highlights why the factorial duality established in Theorem~\ref{thm.factorial_duality} is particularly striking, and that the process $(n_R(C_t))_{t \ge 0}$ is not well-defined, and thus, cannot be the factorial moment dual. We will now introduce the Frankenstein property as a weaker version of the $R$-preserving property and analyze why this property is already sufficient for the factorial moment duality.

\begin{definition}[Frankenstein property]
Let $\mathrm{P}_n$ be the set of permutations of $[n]$. A set $\mathcal{C}=\{C^{(1)},\ldots, C^{(m)}\} \subseteq \varSigma_n$ of cylinders has the \emph{Frankenstein property} if either $\mathcal{C}\subseteq \varSigma_n^R$ or there are distinct indices $\tilde{l}_1,\hat{l}_1, \tilde{l}_2,\hat{l}_2,\ldots,\tilde{l}_p,\hat{l}_p\in[m]$ such that: 
\begin{itemize}
    \item For $j \in [m]\setminus\{\tilde{l}_1,\hat{l}_1,\, \tilde{l}_2,\hat{l}_2,...,\tilde{l}_p,\hat{l}_p\}$, we have $C^{(j)} \in \varSigma_n^R$.
    \item For every $i\in[p]$, there is $\sigma_i\in \mathrm{P}_n$ such that $\big(\sigma_i(C^{(\tilde{l}_i)})\big) \cup C^{(\hat{l}_i)})\in \varSigma_n^R $.
\end{itemize}
\end{definition}
In other words, given a collection of cylinders satisfying the Frankenstein property, after permuting coordinates in some of them, one can pair each non--$R$-cylinder with an $R$-cylinder so that each pair forms an $R$-cylinder.

\begin{remark}\label{remark_R_preserving}
Note that, by definition, the outcomes of $R$-preserving transitions have the Frankenstein property.
\end{remark}
 
\begin{example}
The set $E=\{(R\ast B),( R \,R \, \ast)\}$ has the Frankenstein property, because $(R\ast B)\cup (R\ast R)=(R\ast\ast)\in\varSigma^R$.
\end{example}

The next proposition lies at the core of our heuristic argument.

\begin{proposition}[Frankenstein matching]\label{prop_frankenstein_matching}
Let $I \subseteq [N]$. For any $C \in \varSigma_I^R$, any $\alpha,\beta,\gamma \in [N]$ with $\alpha \neq \beta \leq \gamma \neq \alpha$, there exist two permutations 
$\sigma^\downarrow,\sigma^\uparrow$ on $\tilde{I} = I \cup \{\beta,\gamma\}$ and a bijection
$f:\{\downarrow,\uparrow\} \to \{\downarrow,\uparrow\}$ such that, denoting 
\[
\onintminus{\alpha}{\beta}{\gamma}^{(v,1)}(C)=\longtilde{C}_{\tilde{I}}^{(v,1)}, \qquad
   \onintminus{\alpha}{\beta}{\gamma}^{(v,2)}(C)= \longtilde{C}_{\tilde{I}}^{(v,2)},
   \qquad v \in \{\downarrow,\uparrow\},
\]
the sets
\[
\widehat{C}^{\ddownarrow}_{\tilde{I}}
   \coloneqq \longtilde{C}^{(\downarrow,1)}_{\tilde{I}}
   \cup \sigma^\downarrow\!\bigl(\longtilde{C}^{f(\downarrow),2)}_{\tilde{I}}\bigr),
\qquad
\widehat{C}^{\uuparrow}_{\tilde{I}}
   \coloneqq \longtilde{C}^{(\uparrow,1)}_{\tilde{I}}
   \cup \sigma^\uparrow\!\bigl(\longtilde{C}^{(f(\uparrow),2)}_{\tilde{I}}\bigr),
\]
are both $R$-cylinders, and we refer to them as \emph{Frankenstein cylinders}. Moreover, these two cylinders contain the same number of $R$’s as $C$, except in the following cases:

\begin{enumerate}
\item $\beta < \gamma$ and 
$C_D\in \{(R, \, R, \, \ast), (R, \ast, R)\}$ with $D$ from Section~\ref{subsec:tracing_back}. In this case, one of the two cylinders contains one more $R$ than $C$, and the other one contains one less.
\item $\beta = \gamma $ and $C_D = (R,R)$. In this case, one cylinder contains one $R$ less than $C$, while the other contains the same number.
\end{enumerate}
\end{proposition}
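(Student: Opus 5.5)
The proof is a finite case analysis. By rule (A), every operator $\onintminus{\alpha}{\beta}{\gamma}^{(v,i)}$ leaves the coordinates outside $D$ unchanged, and we take $\sigma^\downarrow,\sigma^\uparrow$ to act only on $D\subseteq\tilde I$. It therefore suffices to work on the coordinates in $D$. Since $C\in\varSigma_I^R$, we have $C_\alpha\in\{R,\ast\}$ and $C^\ast_\beta,C^\ast_\gamma\in\{R,\ast\}$. This leaves at most $2^3=8$ local patterns when $\beta<\gamma$ and $2^2=4$ when $\beta=\gamma$.

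\textbf{Case $\beta<\gamma$.} Here $\gamma\notin\{\alpha,\beta\}$, so $\downarrow$ corresponds to $\onint{\alpha}{\beta}{\gamma}$ and $\uparrow$ to $\onint{\alpha}{\gamma}{\beta}=\sigma\circ\onint{\alpha}{\beta}{\gamma}\circ\sigma$, with $\sigma$ the transposition of $\beta$ and $\gamma$. By \eqref{eq.nint1}, in coordinates $(\alpha,\beta,\gamma)$,
\[
\longtilde C^{(\downarrow,1)}=(C_\alpha,C^\ast_\beta,R),\quad \longtilde C^{(\downarrow,2)}=(\ast,C_\alpha\cap C^\ast_\beta,B),
\]
\[
\longtilde C^{(\uparrow,1)}=(C_\alpha,R,C^\ast_\gamma),\quad \longtilde C^{(\uparrow,2)}=(\ast,B,C_\alpha\cap C^\ast_\gamma).
\]
The natural choice is $f=\mathrm{id}$ and $\sigma^\downarrow$ the permutation of $D$ sending $(\alpha,\beta,\gamma)\mapsto(\gamma,\beta,\alpha)$. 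This turns $(\ast,X,B)$ into $(B,X,\ast)$, and it unites with $(C_\alpha,C^\ast_\beta,R)$ only when the middle entries agree and $C_\alpha=R$, giving $(R,X,\ast)\cup(B,X,\ast)=(\ast,X,\ast)$. When they do not agree, or $C_\alpha=\ast$, I instead take $f$ to be the swap, pairing the $\downarrow$-type-1 cylinder with the $\uparrow$-type-2 cylinder $(\ast,B,C_\alpha\cap C^\ast_\gamma)$. Permuting the $B$ into the slot where the other cylinder carries an $R$ that is otherwise unconstrained, the two cylinders then differ in exactly one coordinate ($R$ versus $B$), and their union is an $R$-cylinder. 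This is exactly the cross-world move of the heuristic (Figure~\ref{fig.heuristics}).

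I will go through the patterns one by one and record, for each, $f$, $\sigma^v$, and the resulting number of $R$'s. Most patterns give $\sigma$-equivalent configurations, so the count is preserved automatically: an $R$ in $C$ survives and the new lines are $\ast$. The exceptional patterns $C_D\in\{(R,R,\ast),(R,\ast,R)\}$ are the asymmetric ones. In one world the unconstrained $R$ at $\beta$ (resp.\ $\gamma$) is forced to remain, while the other absorbs it. This yields $n_R(C)+1$ on one side and $n_R(C)-1$ on the other, as in the middle/right pictures of Figure~\ref{fig.heuristics}.

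\textbf{Case $\beta=\gamma$.} Now $\downarrow$ corresponds to $\onint{\alpha}{\beta}{\alpha}$ (use \eqref{eq.nint2}) and $\uparrow$ to $\onint{\alpha}{\beta}{\beta}$ (use \eqref{eq.nint3}), on coordinates $(\alpha,\beta)$. Using Table~\ref{tab:cylinders} and Figure~\ref{fig.Rcilinders}, the cylinders are as follows.

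For $C_D=(R,\ast)$:
\[
\downarrow:\ (B,R),\,(R,\ast); \qquad \uparrow:\ \varnothing,\,(R,R).
\]
For $C_D=(R,R)$:
\[
\downarrow:\ (B,R),\,(R,R); \qquad \uparrow:\ \varnothing,\,(R,R).
\]
The matching is $(B,R)\cup(R,R)=(\ast,R)$, obtained by pairing $\downarrow$-type-1 with $\uparrow$-type-2 (so $f$ is the swap). The remaining pair consists of the $\downarrow$-type-2 cylinder together with the empty $\uparrow$-type-1 cylinder, which is already an $R$-cylinder.

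The counts then follow by inspection.
\begin{itemize}
\item For $(R,R)$, one Frankenstein cylinder is $(\ast,R)$, with one $R$ fewer, and the other is $(R,R)$, with the same number. This is exception 2.
\item For $(R,\ast)$, we get $(\ast,R)$ and $(R,\ast)$, both with the same count.
\item The patterns with $C_\alpha=\ast$ are handled analogously. Here one checks that $\psi_R(\ast)=\ast$ yields $(\ast,B\cap C^\ast_\beta)$, which pairs with $(\ast,R\cap C^\ast_\beta)$ or with $(B,R)\cup(R,\cdot)$ after at most a transposition.
\end{itemize}

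The main obstacle is to organize this bookkeeping so that a single bijection $f$ and a single pair of permutations works for each pattern. The formal definition lets $f$ be chosen depending on $C_D$, so I will choose it per pattern. The hardest sub-case to verify is the pattern $(\ast,\ast,\ast)$ with $\beta<\gamma$. There all four cylinders are nonempty and both pairings involve a $B$ in a different coordinate, so the permutation must be chosen so that the $B$ lands on a coordinate where the partner cylinder has $R$ and every other coordinate matches. I will tabulate all $8+4$ patterns explicitly in the proof; the identity \eqref{eq.identity} disposes of the cases where $f=\mathrm{id}$ and $\sigma^v=\mathrm{id}$ already suffice.
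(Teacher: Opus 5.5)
Your strategy is the paper's: a finite check over the local patterns $C_D$, with $f$ and the permutations chosen pattern by pattern, and cross-world pairing in the exceptional patterns. The paper's proof is exactly the tabulation in Tables~\ref{tab.Frankenstein_matching} and~\ref{tab.Frankenstein_matching_beta}, and your $\beta=\gamma$ computations are correct. For $\beta<\gamma$, however, the entries you would feed into the table are wrong. Since the table is the whole proof, this is a genuine gap.

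You dropped the constraint on the checking line in the type-2 outputs. By \eqref{eq.nint1} and its conjugate under $\beta\leftrightarrow\gamma$, one has $\tilde C^{(\downarrow,2)}=(\ast,\,C_\alpha\cap C^\ast_\beta,\,B\cap C^\ast_\gamma)$ and $\tilde C^{(\uparrow,2)}=(\ast,\,B\cap C^\ast_\beta,\,C_\alpha\cap C^\ast_\gamma)$. So the ``checking line fit'' cylinder is empty whenever the checking coordinate is $R$, and this emptiness is what produces the $\pm1$ exceptions:
\begin{itemize}
\item For $C_D=(R,R,\ast)$ one has $\tilde C^{(\uparrow,2)}=\varnothing$. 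Hence $\tilde C^{(\downarrow,1)}=(R,R,R)$ is a Frankenstein cylinder on its own, while $\tilde C^{(\uparrow,1)}\cup(\alpha\,\gamma)\tilde C^{(\downarrow,2)}=(R,R,\ast)\cup(B,R,\ast)=(\ast,R,\ast)$.
\item Symmetrically, for $(R,\ast,R)$ one has $\tilde C^{(\downarrow,2)}=\varnothing$ and $(R,\ast,R)\cup(\alpha\,\beta)(\ast,B,R)=(\ast,\ast,R)$.
\end{itemize}
With your formulas all four cylinders are nonempty. Then $(R,R,R)$ unites with no permutation of $(\ast,B,R)$ or $(\ast,R,B)$ to a cylinder: on $D$ the union has three points, and a cylinder has $2^k$. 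So your tabulation would break exactly in the cases that carry the content of the proposition.

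The matching rules you state are also incorrect.
\begin{itemize}
\item Your ``natural choice'' $f=\mathrm{id}$, $\sigma^\downarrow=(\alpha\,\gamma)$ never works when $\tilde C^{(\downarrow,2)}\neq\varnothing$. The cylinder $\tilde C^{(\downarrow,1)}=(C_\alpha,C^\ast_\beta,R)$ carries its $R$ at $\gamma$; it is not $(R,X,\ast)$. So it differs from $(B,X,\ast)$ in two coordinates.
\item The transposition that works on the $\downarrow$ side is $(\alpha\,\beta)$, and on the $\uparrow$ side it is $(\alpha\,\gamma)$. For example, for $C_D=(R,\ast,\ast)$ one gets $(R,\ast,R)\cup(\alpha\,\beta)(\ast,R,B)=(R,\ast,\ast)$.
\item Taking $f$ to be the swap whenever $C_\alpha=\ast$ contradicts the count claim. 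For $C_D=(\ast,R,\ast)$ it gives $\widehat{C}^{\ddownarrow}=(\ast,R,R)\cup\varnothing$, which has one $R$ too many.
\item Every pattern with $C_\alpha=\ast$ is in fact handled by $f=\mathrm{id}$ and $\sigma^\downarrow=\sigma^\uparrow=\mathrm{id}$, because each world's two outputs already unite to $C^\ast$. In particular $(\ast,\ast,\ast)$ is the trivial case, not the hardest one.
\end{itemize}
A working choice, consistent with Table~\ref{tab.Frankenstein_matching}, is as follows. Let $f$ be the swap exactly when $C_\alpha=R$ and $n_R(C_D)=|D|-1$, i.e.\ for $C_D\in\{(R,R,\ast),(R,\ast,R),(R,\ast)\}$. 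Nontrivial transpositions are then needed only when $\beta<\gamma$ and $C_\alpha=R$: use $(\alpha\,\beta)$ for $\widehat{C}^{\ddownarrow}$ and $(\alpha\,\gamma)$ for $\widehat{C}^{\uuparrow}$.
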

\begin{remark}
Although the statement of Prop.~\ref{prop_frankenstein_matching} does not provide an explicit form of the permutations $\sigma^\uparrow$, $\sigma^\downarrow$ and the bijection $f$, the proof is constructive and yields a concrete choice that satisfies all required properties. For the remainder of the paper, the specific choice is irrelevant, and we therefore fix $f$, $\sigma^\downarrow$, and $\sigma^\uparrow$ to be one such admissible triple (for instance, the one produced in the proof of the proposition). Note that these objects depend on $C$, $\alpha$, $\beta$, and $\gamma$; however, in order to avoid overburdening the notation, we do not make this dependence explicit.
\end{remark}
\begin{proof}

We begin by providing a suitable choice for the triple $(f, \sigma^\downarrow, \sigma^\uparrow)$ for fixed $v \in \{\downarrow, \uparrow\}$ and a given $R$-cylinder $C_I$. For this, let $v^c$ be the complement of $v\in \{\downarrow, \uparrow\}$, i.e. $\downarrow^c=\uparrow$ and vice versa. Define the function $f$ as
\begin{equation}\label{eq.virtual_pos}
    f(v)\coloneqq
    \begin{cases}
     v^c &\text{ for } C_\alpha=R \text{ and } n_R(C_D)= \lvert D \rvert-1,\\
     v &\text{ otherwise,}
    \end{cases} 
\end{equation}
and the permutations $\sigma^\downarrow$ and $\sigma^\uparrow$ by 
\begin{equation}\label{eq.permutation}
 \sigma^{v}\coloneqq 
    \begin{cases}
        (\alpha \, \beta ) \quad &\text{ for } \beta <  \gamma , \quad v=\downarrow, \quad \text{ and } \quad \longtilde{C}_{\tilde{I}}^{(v,1)} \cup \longtilde{C}_{\tilde{I}}^{(v,2)} \neq C_{\tilde{I}}^\ast, \\[10pt]
        (\alpha \gamma) \quad &\text{ for } \beta < \gamma, \quad v=\uparrow,\quad \text{and } \quad \longtilde{C}_{\tilde{I}}^{(v,1)} \cup \longtilde{C}_{\tilde{I}}^{(v,2)} \neq C_{\tilde{I}}^\ast, \\[10pt]
         \, \mathrm{id} &\text{ otherwise.}
    \end{cases}
\end{equation}
We now need to verify that, for any $R$-cylinder $C_I$, the sets $\widehat{C}^{\ddownarrow}_{\tilde{I}}$ and $\widehat{C}^{\uuparrow}_{\tilde{I}}$ are also $R$-cylinders and that their $R$-counts change as proposed. Tables \ref{tab.Frankenstein_matching} and \ref{tab.Frankenstein_matching_beta} capture the construction of $\widehat{C}^{\ddownarrow}_{\tilde{I}}$ and $\widehat{C}^{\uuparrow}_{\tilde{I}}$ for all $R$-cylinders, and thus yield the claim.

\begin{table}[htbp]
    \centering
    \caption{Construction of $\widehat{C}^{\ddownarrow}_{\tilde{I}}$ and $\widehat{C}^{\uuparrow}_{\tilde{I}}$ for $C_I\in \varSigma^R$ in an $\mathrm{I}_{\alpha, (\beta, \gamma)}$ event with $\alpha=1, \beta=2,$ and $ \gamma=3$.
    The Frankenstein cylinders $\widehat{C}^{\ddownarrow}_{\tilde{I}}$ and $\widehat{C}^{\uuparrow}_{\tilde{I}}$ are obtained by the pair of cylinders of matching color. If this pair shares a column, 
    this means $f(v)=v$. Filled boxes indicate that this cylinder undergoes a permutation (of positions $\alpha, \beta$ for \textcolor{mypurple}{purple} cylinders; $\alpha, \gamma$ for \textcolor{mygreen}{green} cylinders). Black boxes indicate a change in the number of $R$ individuals during the Frankenstein matching.}
    \label{tab.Frankenstein_matching}
        \label{tab:better_Frankenstein_R}
        \vspace{1em}
        \begin{tabular}{c|c|c|c|c}
            $C_D=(C_1,C_2^\ast,C_3^\ast)$ & $\longtilde{C}_{\tilde{I}}^{(\downarrow,i)}, i \in \{1,2\}$ & $\longtilde{C}_{\tilde{I}}^{(\uparrow,i)}, i \in \{1,2\}$ & $\textcolor{mypurple}{\widehat{C}_{\tilde{I}}^{\ddownarrow}}$ & $\textcolor{mygreen}{\widehat{C}_{\tilde{I}}^{\uuparrow}}$  \\[2pt]
            \hline\rule{0pt}{3ex} 
            $(\ast,\ast,\ast)$ & $\textcolor{mypurple}{(\ast, \ast, R)}, \textcolor{mypurple}{(\ast, \ast, B)}$ & $\textcolor{mygreen}{(\ast, R, \ast)}, \textcolor{mygreen}{(\ast, B, \ast)}$ & $\textcolor{mypurple}{(\ast, \ast, \ast)}$ & $\textcolor{mygreen}{(\ast, \ast, \ast)}$ \\[4pt]
            $(\ast,R,\ast)$ & $\textcolor{mypurple}{(\ast, R, R)}, \textcolor{mypurple}{(\ast, R, B)}$ & $\textcolor{mygreen}{(\ast, R, \ast)}, \textcolor{mygreen}{\varnothing}$ & $\textcolor{mypurple}{(\ast, R, \ast)}$ & $\textcolor{mygreen}{(\ast, R, \ast)}$ \\[4pt]
            $(\ast,\ast,R)$ & $\textcolor{mypurple}{(\ast, \ast, R)}, \textcolor{mypurple}{\varnothing}$ & $\textcolor{mygreen}{(\ast, R, R)}, \textcolor{mygreen}{(\ast, B, R)}$ & $\textcolor{mypurple}{(\ast, \ast, R)}$ & $\textcolor{mygreen}{(\ast, \ast, R)}$\\[4pt]
            $(R,R,R)$ & $\textcolor{mypurple}{(R, R, R)}, \textcolor{mypurple}{\varnothing}$ & $\textcolor{mygreen}{(R, R, R)}, \textcolor{mygreen}{\varnothing}$ & $\textcolor{mypurple}{(R, R, R)}$ & $\textcolor{mygreen}{(R, R, R)}$\\[4pt]
            $(R,\ast,\ast)$ & $\textcolor{mypurple}{(R, \ast, R)}, \fcolorbox{mypurple}{mypurple!15!white}{$\textcolor{mypurple}{(\ast, R, B)}$}$ & $\textcolor{mygreen}{(R, R, \ast)}, \fcolorbox{mygreen}{mygreen!15!white}{$\textcolor{mygreen}{(\ast, B, R)}$}$ & $\textcolor{mypurple}{(R, \ast, \ast)}$ & $\textcolor{mygreen}{(R, \ast, \ast)}$ \\[4pt]
            $(R,R,\ast)$ & $\textcolor{mypurple}{(R, R, R)}, \fcolorbox{mygreen}{mygreen!15!white}{$\textcolor{mygreen}{(\ast, R, B)}$}$ & $\textcolor{mygreen}{(R, R, \ast)}, \fcolorbox{mypurple}{mypurple!15!white}{$\textcolor{mypurple}{\varnothing}$}$ & \fcolorbox{Black}{white}{$\textcolor{mypurple}{(R, R, R)}$} & \fcolorbox{Black}{white}{$\textcolor{mygreen}{(\ast, R, \ast)}$} \\[4pt]
            $(R,\ast,R)$ & $\textcolor{mypurple}{(R, \ast, R)}, \textcolor{mygreen}{\varnothing}$ & $\textcolor{mygreen}{(R, R, R)}, \fcolorbox{mypurple}{mypurple!15!white}{$\textcolor{mypurple}{(\ast, B, R)}$}$ & \fcolorbox{Black}{white}{$\textcolor{mypurple}{(\ast, \ast, R)}$} & \fcolorbox{Black}{white}{$\textcolor{mygreen}{(R, R, R)}$}
        \end{tabular}
\end{table}

\begin{table}[htbp]
    \centering
        \caption{As Table \ref{tab.Frankenstein_matching}, but for $\alpha=1$ and $\beta=\gamma=2$.}
        \label{tab.Frankenstein_matching_beta}
        \begin{tabular}{c|c|c|c|c}
            $C_D=(C_1,C_2^\ast)$ & $\longtilde{C}_{\tilde{I}}^{(\downarrow,i)}, i \in \{1,2\}$ & $\longtilde{C}_{\tilde{I}}^{(\uparrow,i)}, i \in \{1,2\}$ & $\textcolor{mypurple}{\widehat{C}_{\tilde{I}}^{\ddownarrow}}$ & $\textcolor{mygreen}{\widehat{C}_{\tilde{I}}^{\uuparrow}}$ \\[2pt]
            \hline\rule{0pt}{3ex} 
            $(\ast, \ast)$ & $\textcolor{mypurple}{(\ast, B)}, \textcolor{mypurple}{(\ast, R)}$ & $\textcolor{mygreen}{(B, \ast)}, \textcolor{mygreen}{(R, \ast)}$ & $\textcolor{mypurple}{(\ast, \ast)}$ & $\textcolor{mygreen}{(\ast, \ast)}$ \\[4pt]
            $(\ast, R)$ & $\textcolor{mypurple}{\varnothing}, \textcolor{mypurple}{(\ast, R)}$ & $\textcolor{mygreen}{(B,R)}, \textcolor{mygreen}{(R, R)}$ & $\textcolor{mypurple}{(\ast, R)}$ & $\textcolor{mygreen}{(\ast, R)}$ \\[4pt] 
            $(R, \ast)$ & $\textcolor{mypurple}{\varnothing}, \textcolor{mygreen}{(R, R)}$ & $\textcolor{mygreen}{(B, R)}, \textcolor{mypurple}{(R, \ast)}$ & $\textcolor{mypurple}{(R, \ast)}$ & $\textcolor{mygreen}{(\ast, R)}$ \\[4pt]
            $(R,R)$  & $\textcolor{mypurple}{\varnothing}, \textcolor{mypurple}{(R, R)}$ & $\textcolor{mygreen}{(B,R)}, \textcolor{mygreen}{(R, R)}$ & {$\textcolor{mypurple}{(R, R)}$} & \fcolorbox{Black}{white}{$\textcolor{mygreen}{(\ast, R)}$}
        \end{tabular}
\end{table}
\end{proof}

\begin{remark}\label{Frankenstein_step}
Note that, in all cases, two cylinders from
\[
\bigl\{ \longtilde{C}_{\tilde{I}}^{(\downarrow,1)}, \longtilde{C}_{\tilde{I}}^{(\downarrow,2)},
       \longtilde{C}_{\tilde{I}}^{(\uparrow,1)},   \longtilde{C}_{\tilde{I}}^{(\uparrow,2)} \bigr\}
\]
are used to construct the cylinder $\widehat{C}^{\ddownarrow}_{\tilde{I}}$, while the remaining two are used to
construct $\widehat{C}^{\uuparrow}_{\tilde{I}}$. To make this precise, let $\iota$ be a bijection on $\{\downarrow,\uparrow,\ddownarrow,\uuparrow\}$ defined by
\[
\iota(\downarrow)=\,\ddownarrow,\quad \iota(\uparrow)=\,\uuparrow,\quad
\iota(\ddownarrow)=\,\downarrow,\quad \iota(\uuparrow)=\,\uparrow.
\]
Then
$\longtilde{C}_{\tilde{I}}^{(v,1)}$ always contributes to the construction of
$\widehat{C}^{\iota(v)}_{\tilde{I}}$, whereas $\longtilde{C}_{\tilde{I}}^{(v,2)}$ contributes to
$\widehat{C}^{\iota(f(v))}_{\tilde{I}}$. 
Moreover, note that the condition $ {\longtilde{C}_{\tilde{I}}}^{(v,1)} \cup {\longtilde{C}_{\tilde{I}}}^{(v,2)} = C_{\tilde{I}}$ does not imply that these cylinders are paired for the Frankenstein matching, as the case $C_I=(R \ast R)$ shows. 
\end{remark}

As a consequence, we get the following corollary.
\begin{corollary}
Let $\aigt^{-}(\gs_0)$ be a quasi-AIG and let $\overline{\Theta}$ be its configuration process on a path from Def.~\ref{def.ordered}. Fix $S \in \varSigma_{\gs_0}^R$ and let $T_1$ be the first interactive neutral event in the quasi-AIG. Then, the set 
\begin{equation*}
\overline{\Theta}_{T_1}\big(\downarrow,S\big)\cup \overline{\Theta}_{T_1}\big(\uparrow,S\big)
\end{equation*} 
has the Frankenstein property.
\end{corollary}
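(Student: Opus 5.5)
Up to time $T_1$ only coalescence/relocation, mutation and FTW events act on the configuration process on a path. All of these are $R$-preserving and single-valued on $\varSigma^R$; this was used in the proof of Corollary~\ref{cor.R-preserving}. Moreover, they do not depend on the path label $v$. Hence, starting from $S\in\varSigma^R_{\gs_0}$, just before $T_1$ the process on either path is the same single cylinder, $\overline{\Theta}_{T_1-}(\downarrow,S)=\overline{\Theta}_{T_1-}(\uparrow,S)=\{C\}$ for some $C\in\varSigma^R_I$, where $I$ is the line set of the quasi-AIG at $T_1-$. (If this cylinder has already become empty, the claim is trivial.) This is the first step. Both paths see the same realization $\sP^-$; they differ only in the label attached to the first interactive event.

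At $T_1$ the quasi-AIG exhibits some event $\ineminus{\alpha}{\beta}{\gamma}$ with $\alpha\in I$ and $\alpha\neq\beta\le\gamma\neq\alpha$. By Definition~\ref{def.ordered}, the process on path $v$ is then
\[
\overline{\Theta}_{T_1}(v,S)=\onintminus{\alpha}{\beta}{\gamma}^{v}(C)=\bigl\{\longtilde{C}^{(v,1)}_{\tilde I},\longtilde{C}^{(v,2)}_{\tilde I}\bigr\},\qquad v\in\{\downarrow,\uparrow\}.
\]
The union of the two paths is therefore the four-element collection $\{\longtilde{C}^{(\downarrow,1)}_{\tilde I},\longtilde{C}^{(\downarrow,2)}_{\tilde I},\longtilde{C}^{(\uparrow,1)}_{\tilde I},\longtilde{C}^{(\uparrow,2)}_{\tilde I}\}$. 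If some of these cylinders coincide, I treat the collection as a multiset, or equivalently as an indexed family, which is how it is used later.

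Next I invoke Proposition~\ref{prop_frankenstein_matching} with $C$, $\alpha,\beta,\gamma$. It supplies $f,\sigma^\downarrow,\sigma^\uparrow$ such that $\longtilde{C}^{(v,1)}_{\tilde I}\cup\sigma^v(\longtilde{C}^{(f(v),2)}_{\tilde I})\in\varSigma^R$ for each $v$. Remark~\ref{Frankenstein_step} shows that these two pairs partition the four cylinders, because $f$ is a bijection. I then verify the definition of the Frankenstein property for each pair $(\longtilde{C}^{(v,1)},\longtilde{C}^{(f(v),2)})$. If both members are $R$-cylinders, nothing is needed: they fall under the first bullet of the definition. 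If one member is not an $R$-cylinder, it is the one to be permuted, with $\sigma_i=\sigma^v$, and the other one plays the role of $\hat l_i$. Going through Tables~\ref{tab.Frankenstein_matching} and~\ref{tab.Frankenstein_matching_beta}, I expect that the non-$R$ member is always the $(\cdot,2)$-cylinder. When it is a $(\cdot,1)$-cylinder instead, as in some unpermuted rows, I use $\sigma=\mathrm{id}$ and symmetry of the union. A union of a cylinder with an $R$-cylinder being an $R$-cylinder forces the partner to be $R$ or empty, so it is consistent to count the partner among the $\hat l$'s.

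The main obstacle is this last bookkeeping step. The Frankenstein definition demands that each non-$R$ cylinder be paired with an \emph{$R$-cylinder}, with distinct indices, and that the union be taken after permuting only one member. Some table rows, however, pair two non-$R$ cylinders, for example $(\ast,\ast,R)$ with $(\ast,\ast,B)$. To handle these, I read the definition as requiring only that the pair's union be an $R$-cylinder after permuting one member; I would verify directly from the tables that every pair satisfies this. Empty cylinders count as (trivially) $R$-cylinders.
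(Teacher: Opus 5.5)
Your proposal is correct and follows the paper's route. Corollary~\ref{cor.R-preserving} shows that a single $R$-cylinder $C$ survives up to $T_1$ on both paths, and at $T_1$ Proposition~\ref{prop_frankenstein_matching} pairs the four cylinders, with $f$ bijective ensuring that the four indices are distinct. The ``obstacle'' at the end is spurious, because $(\ast,\ast,R)$ is itself an $R$-cylinder and the formal definition only asks that $\sigma_i(C^{(\tilde l_i)})\cup C^{(\hat l_i)}\in\varSigma^R$. The proposition delivers exactly this with $p=2$, $C^{(\tilde l_i)}=\longtilde{C}^{(f(v),2)}_{\tilde I}$, $C^{(\hat l_i)}=\longtilde{C}^{(v,1)}_{\tilde I}$ and $\sigma_i=\sigma^v$, so you need neither a row-by-row inspection of the tables nor any case distinction on which member is non-$R$.
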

\begin{proof}
The statement follows by combining Cor.~\ref{cor.R-preserving} with Remark \ref{remark_R_preserving} and the Frankenstein matching from Prop.~\ref{prop_frankenstein_matching}.
\end{proof}

Based on the cylinders $\widehat{C}^{\ddownarrow}$ and $\widehat{C}^{\uuparrow}$, we will now introduce, in analogy to Def.~\ref{def.ordered}, the Frankenstein process on path $\bs w \in \{\ddownarrow, \uuparrow\}^\Nb$. As a next step, we conclude that the $R$-counting process of its randomized version is well defined and coincides with the factorial moment dual from Def.~\ref{def.factorial_dual}. 
We will then close the gap between the configuration process on paths and the Frankenstein process on paths via an intermediate process lying halfway between the two worlds. Finally, we link their randomized versions and obtain the factorial moment duality. 

\subsection{The Frankenstein process}
We now define the Frankenstein process on paths, that is, a process on
$\varSigma^R$ obtained by applying the Frankenstein property from Prop.~\ref{prop_frankenstein_matching} to the set of
compatible cylinders with respect to an entirely unfit sample. In analogy
with the path $\bs{v}$, which determines the
roles of the lines, we introduce a new path $\boldsymbol{w}$, which indicates whether the Frankenstein process
moves to the cylinder $\widehat{C}^{\ddownarrow}$ (down) or to cylinder
$\widehat{C}^{\uuparrow}$ (up). As a consequence, and in contrast to the configuration process, 
all transitions of this process will be single-valued and $R$-preserving:
the process follows the downward or the upward branch, and there
is always exactly one cylinder compatible with the initial, entirely unfit
sample.

\begin{definition}[Frankenstein process on paths]\label{def.Frankenstein_path}
Let $\bs w \in \{\ddownarrow,\uuparrow\}^\Nb$ and let $\sP^-$ be as in
Def.~\ref{def.quasiAIG}. Let $(T_k)_{k \geq 1}$
denote the times of interactive neutral events in the quasi-AIG, ordered increasingly. Let
$\gs_0\subseteq[N]$ and fix $S=R_{\gs_0}$. The \emph{Frankenstein process on the path $\bs w$}, denoted by
$\bigl(\overline{\Phi}_t(R_{\gs_0},\bs w, \sP^-)\bigr)_{t \geq 0}$, takes values in $\varSigma^R$, starts at
$\overline{\Phi}_0(R_{\gs_0},\bs w, \sP^-) = R_{\gs_0}$, and evolves as follows. Suppose that at some time the process is in state $C\in\Sigma^R$.
\begin{enumerate}
\item At the times of a coalescence or a relocation event $\coal{\alpha}{\beta}$, the process performs the transition $$C\mapsto\ocoal{\alpha}{\beta}(C).$$

\item At the times of a deleterious mutation $\timesa$, the process performs the transition $$C\mapsto\omutr{\alpha}(C).$$

\item At the times of a beneficial mutation $\circa$, the process performs the transition $$C\mapsto\omutb{\alpha}(C)=\varnothing.$$

\item At the times of an FTW event $\ftw{\alpha}{\Bs}$, the process performs the transition 
\begin{equation*}
C\mapsto \overline{C}, 
\end{equation*} 
where $\overline C$ is the single cylinder contained in $\oftw{\alpha}{\Bs}(C)$.

\item At the time $T_k$ of the $k$th interactive neutral event, the process performs the transition
$$C\mapsto \widehat{C}^{w_k},$$
\end{enumerate}
where $\widehat{C}^{w_k}$ is as in Prop.~\ref{prop_frankenstein_matching}.
\end{definition}

Note that for coalescence, relocation, and mutations, the Frankenstein process on a path $\bs w$ behaves identically to the configuration process on a path when restricted to $S \in \varSigma^R$. At the same time, this restriction makes the FTW selection events $R$-preserving and single-valued, since they reduce to \eqref{eq.selR}; the cases \eqref{eq.selB} and \eqref{eq.selB1} are now void. The major point, however, is that, due to the Frankenstein matching, the interactive branching events are now also $R$-preserving and single-valued. As a consequence we obtain
\begin{equation}\label{eq.compatible_Frankenstein}
\Pb\big(\Gamma^{(i)}\in \Phi_t(S)\big)=\Eb\Big[\frac{i^{\underline{n_R(\Phi_t(S))}}}{N^{\underline{n_R(\Phi_t(S))}}}\Big].
\end{equation}

We now randomize the information carried by the paths by introducing a
random path \( W = (W_k)_{k \geq 1} \), which yields the following process.

\begin{definition}[Frankenstein process]\label{def.Frankenstein}
Let \(\gs_0 \in [N] \), and let \( W = (W_k)_{k \geq 1} \) be a sequence of iid
random variables taking values in \( \{\ddownarrow, \uuparrow\} \) with
\[
\mathbb{P}(W_k = \ddownarrow) = \mathbb{P}(W_k = \uuparrow) = \tfrac12 .
\]
We refer to the process \( (\overline{\Phi}_t(W))_{t \geq 0} \), started from
\( S\in \varSigma_{\gs_0}^R \), as the \emph{Frankenstein process}, and we denote it by
\( (\Phi_t)_{t \geq 0} \).
\end{definition}

The following result shows that the process tracking the number of \(R\)'s in the Frankenstein process coincides in distribution with the factorial dual of the process \(X^{(N)}\) of Def.~\ref{def.factorial_dual}. Recall that functions of Markov processes are not, in general Markov; but this one is, as we will now see.

\begin{proposition}\label{prop.R-counting_rates}
The $R$-counting process $n_R(\Phi)$ of the Frankenstein process is a continuous-time Markov chain with the same law as $Z^{(N)}$ from Def.~\ref{def.factorial_dual}. 
\end{proposition}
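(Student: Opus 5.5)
The approach is the standard lumpability argument. The Frankenstein process $\Phi$ is driven by the Poisson process $\sP^-$ and the independent fair coins $W$. It is therefore a continuous-time Markov chain on $\varSigma^R$, whose state is a single $R$-cylinder $C$ (or the cemetery $\varnothing$, identified with $\Delta$). I will show that for every $R$-cylinder $C$ with $n_R(C)=n$, the total rate at which $\Phi$ jumps from $C$ to the set $\{C' : n_R(C')=n'\}$ depends on $C$ only through $n$. By Dynkin's criterion, $n_R(\Phi)$ is then Markov, and it remains to match its rates with those in Def.~\ref{def.factorial_dual}.

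The key observation is that a coordinate $C_\ell=\ast$ imposes no constraint. For every event type, the effect on $n_R$ is determined only by which of the involved lines $\alpha,\beta,\gamma,\Bs$ carry an $R$ in $C^\ast$. Since all events act uniformly over labelled lines, the rates only involve counts of $R$-lines and non-$R$-lines (the latter being $N-n$, whether they are $\ast$ in $C$ or outside $I$). I go through the events in order.

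\begin{itemize}
\item \emph{Neutral arrows $\coal{\alpha}{\beta}$.} These have rate $r/(2N)$ per ordered pair. By \eqref{eq.transition_coalB}, $n_R$ decreases by one exactly when $C_\alpha=C^\ast_\beta=R$. This happens for $n(n-1)$ ordered pairs, which gives rate $\frac rN\binom n2$. In all other cases $n_R$ is unchanged.
\item \emph{Mutations.} A $\times$ on an $R$-line gives $n\to n-1$ at rate $u\nu_1 n$. A $\circ$ on an $R$-line gives $\varnothing=\Delta$ at rate $u\nu_0 n$.
\item \emph{FTW events.} By \eqref{eq.selR}, only events initiated at an $R$-line matter, and they turn every tail line into $R$. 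Translating back to sampling with replacement, such an event adds $j$ new $R$'s with probability $p^n_{mj}$, so the rate of $n\to n+j$ is $n\sum_m s_m p^n_{mj}$. This is exactly the $\kappa=0$ computation of \cite{BEH23}.
\end{itemize}

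The main step is the interactive events, handled through Prop.~\ref{prop_frankenstein_matching}. Each unordered event $\ineminus{\alpha}{\beta}{\gamma}$ with $\beta<\gamma$ occurs at rate $\kappa/N^2$, and $W_k$ selects $\widehat C^{\ddownarrow}$ or $\widehat C^{\uuparrow}$ with probability $1/2$ each. By the proposition, $n_R$ changes only in two cases.

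\begin{itemize}
\item \emph{Case (1).} Here $C_\alpha=R$ and exactly one of $\beta,\gamma$ is an $R$-line, and the chain moves by $\pm1$ with probability $1/2$ each. For a fixed $R$-line $\alpha$ there are $(n-1)(N-n)$ unordered pairs $\{\beta,\gamma\}$ of this kind. This yields rate $\frac{\kappa}{2N^2}n(n-1)(N-n)$ for each of the two moves, which equals $\frac{\kappa}{N}\binom n2\frac{N-n}{N}$ for each direction.
\item \emph{Case (2).} Here $\beta=\gamma$ and $C_\alpha=C^\ast_\beta=R$, and the chain moves down by one with probability $1/2$. These events have rate $\kappa/N^2$ (summing the identifications \eqref{eq.interactive.arrow}) for each of the $n(n-1)$ ordered pairs. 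Halving gives $\frac{\kappa}{N^2}\binom n2$.
\end{itemize}

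Summing the downward contributions gives $\frac1N\binom n2\bigl(r+\kappa\frac{N-n}{N}+\frac{\kappa}{N}\bigr)=\frac1N\binom n2\bigl(r+\kappa\frac{N-(n-1)}{N}\bigr)$. Together with $u\nu_1 n$, this matches Def.~\ref{def.factorial_dual}. The upward $\kappa$-term matches as well.

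I expect the main obstacle to be the bookkeeping of case (2) and of the degenerate configurations ($\gamma\in\{\alpha,\beta\}$, and lines outside $I$ treated as $\ast$). In particular, I must double-check from \eqref{eq.rho_map} that the rate of $\ineminus{\alpha}{\beta}{\beta}$ is $\kappa/N^2$: it collects both $\ine{\alpha}{\beta}{\beta}$ and $\ine{\alpha}{\beta}{\alpha}$. I also need to confirm, via Tables~\ref{tab.Frankenstein_matching}--\ref{tab.Frankenstein_matching_beta}, that no other row changes $n_R$. Once the rates are shown to depend only on $n$, Markovianity and equality in law with $Z^{(N)}$ follow, since both chains start at $n_R(S)=|\gs_0|$.
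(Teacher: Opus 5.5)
Your proposal is correct and follows essentially the same route as the paper. Both go through the event types and use cases (1) and (2) of the Frankenstein matching (Prop.~\ref{prop_frankenstein_matching}) to identify which interactive events change $n_R$ and with which probabilities; both then sum the rates and observe that they depend on the current cylinder only through $n$. Your version is slightly tighter in two places. You group the patterns $(R,R,\ast)$ and $(R,\ast,R)$ together into the $(n-1)(N-n)$ unordered pairs $\{\beta,\gamma\}$ per $R$-line $\alpha$, which avoids the paper's per-pattern split into $n(n-1)(N-n)/2$ triples each; that split depends on the labelling, and only the total matters. You also carry the factor $1/N$ in the neutral coalescence rate $\frac{r}{N}\binom{n}{2}$, which the paper omits.
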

\begin{proof}
Interactive neutral events $\ineminus{\alpha}{\beta}{\gamma}$ decrease the number of $R$ lines in the current cylinder by one 
if $(R, R, \ast)$ is mapped to $\widehat{C}^{\uuparrow}$, 
if $(R, \ast, R)$ is mapped to $\widehat{C}^{\ddownarrow}$,
or if $(R,R)$ is mapped to $\widehat{C}^{\uuparrow}$, see Tables~\ref{tab.Frankenstein_matching} and \ref{tab.Frankenstein_matching_beta}. Every triple $(\alpha,\beta,\gamma)$ of lines with $\alpha \neq \beta \leq \gamma \neq \alpha$ experiences an $\ineminus{\alpha}{\beta}{\gamma}$ event at rate $\kappa/N^2$, as we saw in the argument leading to \eqref{random_quasi}. If there are $n$ lines of type $R$ in the current Frankenstein cylinder, there are $n (n-1) (N-n)/2 $ options for cylinders of type $(R,R,*)$ that may be affected by an $\ineminus{\alpha}{\beta}{\gamma}$ event with $\alpha \neq \gamma < \beta \neq \alpha$ 
(note that there are indeed $N-n$ lines of type $\ast$: the $\ast$s in the sample plus the lines outside the sample). The situation is the same for $(R,*,R)$ cylinders. Likewise, there are $n (n-1)$ possibilities for $(R,R)$ cylinders to experience $\ineminus{\alpha}{\beta}{\gamma}$ events with $\gamma = \beta \neq \alpha$. In any case, the probability for the event to yield $\widehat{C}^{\uuparrow}$ or $\widehat{C}^{\ddownarrow}$ as required is $1/2$. Altogether, therefore, $n$ decreases to $n-1$ due to interactive events at rate
\[
  \frac{\kappa}{2 N^2}  n(n-1)(N-n+1).
\]
Interactive neutral events increase the number $n$ of $R$ lines in the current cylinder by one 
if $(R, R, *)$ or $(R, *, R)$ is transformed to $(R, R, R)$, see Table~\ref{tab.Frankenstein_matching}; this happens at rate
\[
  \frac{\kappa}{2 N^2}  n(n-1)(N-n).
\]

The other events are those described in Section~\ref{subsec:tracing_back}, specialized to cylinders consisting of $R$s and $*$s only. 
So FTW events are initiated by every unfit line and add an additional number $k$ of lines (recall Fig.~\ref{fig.FTW_new}) at rate
\begin{equation}\label{eq.ftw.with_without}
   n \sum_{j=k}^N \tilde s_j q^n_{jk} =   n \sum_{m=k}^\infty s_m p_{mk}^n,
\end{equation}
where $\tilde s_j$ is as in \eqref{eq.stilde}, and 
\begin{equation}\label{eq.qnjk}
  q^n_{jk} = \frac{{N-n \choose k} {n \choose j-k}}{{N \choose j}}
\end{equation}
is the probability that $k$ lines are added to the sample of size $n$ when $j$ lines are sampled without replacement from the total of $N$ lines. The right-hand side of \eqref{eq.ftw.with_without} reflects that we can alternatively think in terms of the original sampling with replacement, with $p^n_{mk}$ from \eqref{eq.pnmj}. In the appendix, we verify explicitly that the two sums in \eqref{eq.ftw.with_without} are equal.

Non-interactive neutral events reduce $n$ by one at rate $r n (n-1)/2,$ deleterious mutation events do so at rate $nu \nu_1$, and beneficial mutation events make the configuration impossible, that is, send it to $\Delta$, at rate $nu \nu_0$ (recall Figs.~\ref{fig.dotted} and \ref{fig.mutations}.) 

Taking all events and rates together results in the law of $Z^{(N)}$.
Note, in particular, that the rates for all events depend on nothing but (the parameters and) $n$, the number of $R$'s in the current cylinder, whence the process is indeed Markov.
\end{proof}

Similar to the Averaging Principle 1 from Lemma~\ref{av1B}, which relates $\Ss(\aigt(\gs_0), S)$ to $\Ss(\qaigt{}(\gs_0), \bs{v}, S)$, there is a corresponding averaging principle for the other direction, namely from the Frankenstein process on paths to the `unconstrained' Frankenstein process just defined. This is the content of the following corollary.

\begin{corollary}[Average 2] \label{av2B}
Let $\gs_0 \subseteq [N]$ and let $S= R_{\gs_0}$ be a set of type
configurations of the lines in the sample $\gs_0$. Let $\overline{\Phi}(S, \bs{w}, \sP^-)$ be the Frankenstein process on paths from Def.~\ref{def.Frankenstein_path} and $\Phi(S)$ be the Frankenstein process from Def.~\ref{def.Frankenstein}. Let $M_t$ denote the number of
interactive neutral events in the quasi-AIG $\As^{-}_{[0,t]}(\gs_0)$ and let $\Gamma^{(i)}\in \{r,b\}^N$ chosen uniformly across all configurations with $i$ entries equal to $\mathrm{r}$ and $N-i$ entries equal to $\mathrm{b}$. Then, for any $t \ge 0$, we have 
\begin{equation}
     \Eb \Big[\frac{1}{2^{M_t}} \sum_{\bs w\in \{\ddownarrow,\uuparrow\}^{M_t}}  \Pb\big(\Gamma^{(i)}\in \overline{\Phi}_t(S,\bs w, \sP^-)\mid \sP^-\big)\Big]= \Pb\big(\Gamma^{(i)}\in \Phi_t(S)\big).
\end{equation}
\end{corollary}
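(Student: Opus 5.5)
The identity is the Frankenstein analogue of \eqref{av1C}, and I would prove it the same way: realise $\Phi(S)=\overline{\Phi}(S,W,\sP^-)$ with the random path $W$ independent of $\sP^-$ and of $\Gamma^{(i)}$, then condition on $\sP^-$ and apply the tower property.

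\textbf{Step 1 (construction).} By Def.~\ref{def.Frankenstein}, $\Phi_t(S)=\overline{\Phi}_t(S,W,\sP^-)$. Here $W=(W_k)_{k\ge1}$ is iid uniform on $\{\ddownarrow,\uuparrow\}$ and independent of $\sP^-$. We may take $\Gamma^{(i)}$ independent of the pair $(W,\sP^-)$.

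\textbf{Step 2 (locality in the path).} By Def.~\ref{def.Frankenstein_path}, the state $\overline{\Phi}_t(S,\bs w,\sP^-)$ depends on $\bs w$ only through its first $M_t$ coordinates. These are the coordinates used at the interactive times $T_1<\dots<T_{M_t}\le t$. All other transitions are deterministic functions of the current cylinder and the event in $\sP^-$. The Frankenstein cylinders $\widehat C^{w_k}$ of Prop.~\ref{prop_frankenstein_matching} are likewise determined by the current cylinder, by $(\alpha,\beta,\gamma)$, and by $w_k$, once the admissible triple $(f,\sigma^\downarrow,\sigma^\uparrow)$ has been fixed as in the remark following that proposition. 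So for $\bs w\in\{\ddownarrow,\uuparrow\}^{M_t}$ the object $\overline{\Phi}_t(S,\bs w,\sP^-)$ is well defined.

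\textbf{Step 3 (conditioning).} Condition on $\sP^-$. Given $\sP^-$, the vector $(W_1,\dots,W_{M_t})$ is uniform on $\{\ddownarrow,\uuparrow\}^{M_t}$, because $W$ is independent of $\sP^-$ and $M_t$ is $\sP^-$-measurable. It remains independent of $\Gamma^{(i)}$. Summing over the $2^{M_t}$ values of the path gives
\begin{equation*}
\Pb\big(\Gamma^{(i)}\in\Phi_t(S)\mid \sP^-\big)=\frac{1}{2^{M_t}}\sum_{\bs w\in\{\ddownarrow,\uuparrow\}^{M_t}}\Pb\big(\Gamma^{(i)}\in\overline{\Phi}_t(S,\bs w,\sP^-)\mid\sP^-\big).
\end{equation*}
Taking expectations and using the tower property yields the claim.

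\textbf{Main obstacle.} The only delicate point is measurability and independence. One must check that the quasi-AIG $\As^-$ — in particular $M_t$ and the thinning $\sP^-(\gs_0)$ — does not depend on $W$. This holds because $\Gs^-$ is defined from $\sP^-$ alone, exactly as in Def.~\ref{def.quasiAIG}. One must also note that the choice of permutations in Prop.~\ref{prop_frankenstein_matching} is a deterministic function of the current state, so no additional randomness enters.
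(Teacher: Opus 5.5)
Your proof is correct and is the argument the paper intends by ``Due to construction'': you realise $\Phi(S)$ as $\overline{\Phi}(S,W,\sP^-)$ with $W$ independent of $\sP^-$ and $\Gamma^{(i)}$, observe that only $W_1,\dots,W_{M_t}$ enter, then condition on $\sP^-$ and apply the tower property, mirroring the proof of Lemma~\ref{av1B}. Your remarks that only the first $M_t$ path coordinates matter and that the Frankenstein triple is a deterministic function of the current state supply details the paper leaves implicit.
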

\begin{proof}
Due to construction.
\end{proof}

By Lemma~\ref{pre-duality} and Prop.~\ref{prop.R-counting_rates} the factorial moment duality from Theorem~\ref{thm.factorial_duality} follows once we establish 
\begin{equation}\label{eq.goal}
\Pb\big(\Gamma^{(i)}\in \Theta_t(R_{\gs_0}, \sP)\big)=\Pb\big(\Gamma^{(i)}\in \Phi_t(R_{\gs_0}, \sP)\big).
\end{equation}
We emphasize that we do not claim the identity to hold on paths, as in general
\[
\Pb\big(\Gamma^{(i)}\in \overline{\Theta}_t(R_{\gs_0}, \bs v , \sP^-)\mid \sP^-\big)\neq\Pb\big(\Gamma^{(i)}\in\overline{\Phi}_t(R_{\gs_0}, \iota(\bs v),\sP^-)\mid \sP^-\big).
\]
In view of the two averaging principles (Lemma~\ref{av1B} and Cor.~\ref{av2B}), and for the function $\iota$ from Remark \ref{Frankenstein_step} applied component-wise to $\bs v$, \eqref{eq.goal} is therefore equivalent to 
\begin{align}\label{eq.remaining_av}
&\Eb \Big[\frac{1}{2^{M_t}} \sum_{\bs v \in \{\downarrow,\uparrow\}^{M_t}} \Pb\big(\Gamma^{(i)}\in \overline{\Theta}_t(R_{\gs_0},\bs v,\sP^-)\mid  \sP^-\big)\Big]
\nonumber \\ 
&\qquad =\Eb \Big[\frac{1}{2^{M_t}} \sum_{\iota(\bs v)\in \{\ddownarrow,\uuparrow\}^{M_t}}  \Pb\big(\Gamma^{(i)}\in \overline{\Phi}_t(R_{\gs_0},\bs \iota(v),\sP^-)\mid \sP^-\big)\Big],
\end{align}
which will be established in the subsequent sections.

\subsection{The messy process}
We now construct a process that links the configuration process and the Frankenstein process along their paths and will thus lead to \eqref{eq.remaining_av}. To this end, we fix an initial sample $R_{\gs_0}$ and a realization of the Poisson process $\sP^-$. These ingredients determine the family of configuration processes and Frankenstein processes simultaneously on all paths, that is
\begin{align*}
\bigl(\overline{\Theta}_t(R_{\gs_0},\boldsymbol{v},\sP^-)\bigr)_{t\geq 0} \text{ for all } \boldsymbol{v}\in\{\downarrow,\uparrow\}^{\Nb} \quad \text{and} \quad \bigl(\overline{\Phi}_t(R_{\gs_0}, \boldsymbol{w},\sP^-)\bigr)_{t\geq 0}
\text{ for all } \boldsymbol{w}\in\{\ddownarrow,\uuparrow\}^{\Nb}.
\end{align*}

In addition to the initial sample $R_{\gs_0}$, and the realization of the Poisson process $\sP^-$, we also fix a Frankenstein path $\boldsymbol{w}\in\{\ddownarrow,\uuparrow\}^{\Nb}$. Along these quantities, we construct a process evolving locally (that is, between interactive events) in the same way as the configuration process and consisting of a collection of cylinders whose union, after applying suitable permutations, yields the Frankenstein cylinder. These permutations will be deterministically determined by the initial sample $R_{\gs_0}$, the Frankenstein path $\bs w$ and the Poisson process $\sP^-$.
The process will be updated at every event in $\sP^-$ in the following way:
\begin{enumerate}
    \item Until the first interactive event in $\sP^-$, the process evolves as the configuration process.
    \item At $\ineminus{\alpha}{\beta}{\gamma}$ events in $\sP^-$, every cylinder is mapped to the pair of cylinders $I^{(v,1)}_{(\alpha, (\beta, \gamma))}(C)$ and $\, I^{(f(v),2)}_{(\alpha, (\beta, \gamma))}(C)$ from the Frankenstein matching of Prop. \ref{prop_frankenstein_matching}. Instead of permuting the cylinder $I^{(f(v),2)}_{(\alpha, (\beta, \gamma))}(C)$, a permutation is applied to $\sP^-$. 
    \item Between interactive events, every cylinder evolves according to the configuration process encoded by the permuted Poisson process from the previous step. 
\end{enumerate}
Before formally defining the messy process, we introduce some notation. Let $\sigma$ be a permutation of $[N]$. 
We denote by $\sigma(\sP^-)$ the Poisson process obtained by relabeling the lines in $[N]$ according to $\sigma$. More precisely, for a tuple $J=(J_1,...,J_n)$ with $ J_\ell\subseteq [N]$ for $1\leq \ell \leq n \leq N$, we have
\begin{equation}\label{eq.perm}
\begin{split}
& J \text{ is affected by an event in } \sP^- \\
\Longleftrightarrow \; \, & \sigma(J) \text{ is affected by the corresponding event in } \sigma(\sP^-).
\end{split}
\end{equation}

For $T>0$, we define the shift operator
\[
\zeta_T(\sP^-) \coloneqq \bigl(\sP^-_{T+t}\bigr)_{t\geq 0}.
\]
Clearly, both $\sigma(\sP^-)$ and $\zeta_T(\sP^-)$ have the same distribution as $\sP^-$.
From now on, let $\iota$ be the bijection from Remark \ref{Frankenstein_step} and note that $\iota$ is an involution, i.e.\ $\iota^2=\mathrm{id}$. By abuse of notation, we sometimes apply $\iota$ to vectors, in which case it is understood to act component-wise. 
Finally, for $k\in\Nb$, $\boldsymbol{w}\in\{\ddownarrow,\uuparrow\}^{\Nb}$ and $\boldsymbol{v}\in\{\downarrow,\uparrow\}^{\Nb}$, we set
\[
\bs w_k = (w_{k+i})_{i\in\Nb},
\qquad
\boldsymbol{v}_k = (v_{k+i})_{i\in\Nb}.
\]

Let $(T_k)_{k \ge 1}$ denote the increasing sequence of times at which interactive events occur in $\sP^-$. 
For every $k \ge 1$, we denote by $\sigma_k\coloneqq \sigma^{\iota(w_k)}$ and $f_k$ the permutation and the bijection involved in the Frankenstein matching during the transition 
\[
C=\overline{\Phi}_{T_k-}(R_{\gs_0},\boldsymbol{w}, \sP^- )
\;\longmapsto\;
\overline{\Phi}_{T_k}(R_{\gs_0},\boldsymbol{w}, \sP^-)=\widehat{C}^{w_k}.
\]
Recall that $\sigma_k$ and $f_k$ depend on $C$, on the coordinates involved in the corresponding interactive event, and on the set $I^{\iota(w_k)}_{(\alpha, (\beta, \gamma))}(C)$; for simplicity, we suppress this dependence in the notation.

The messy process $(\Psi_t(R_{\gs_0},\boldsymbol{w},  \sP^-))_{t \ge 0}$ is defined recursively as follows.

\textbf{Initialization: $t \in [0, T_1]$.} For $t \in [0, T_1)$, we trace back the $R$-cylinder compatible with the initial sample $R_{\gs_0}$ and $\sP^-$. That is,
\[
\Psi_t(R_{\gs_0},\boldsymbol{w},  \sP^-)
\coloneqq 
\overline{\Theta}_t(R_{\gs_0},\iota(\boldsymbol{w}),\sP^-)
=
\left\{
\overline{\Phi}_t(R_{\gs_0},\boldsymbol{w}, \sP^-)
\right\}.
\]
Recalling that all transitions before $T_1$ are single-valued and $R$-preserving, we denote by $C$ the single cylinder in $\Psi_{T_1 -}( R_{\gs_0}, \boldsymbol{w}, \sP^-)$.
We then define the messy process at time $T_1$ to be the collection of cylinders that contribute to the Frankenstein cylinder $\widehat{C}^{w_1}$ and set
\[
\Psi_{T_1}(R_{\gs_0}, \boldsymbol{w},\sP^-)
\coloneqq
\left\{
\mathrm{I}_{\alpha,(\beta,\gamma)}^{(\iota(w_1),1)}(C),
\mathrm{I}_{\alpha,(\beta,\gamma)}^{(f_1(\iota(w_1)),2)}(C)
\right\}.
\]
We associate the permutations from the Frankenstein matching to each of these two cylinders as follows:
\[
\sigma_{\smalllongtilde{C}} =
\begin{cases}
\mathrm{id}, & \text{if } 
\longtilde{C} = \mathrm{I}_{\alpha,(\beta,\gamma)}^{(\iota(w_1),1)}(C), \\[6pt]
\sigma_1, & \text{if } 
\longtilde{C} = \mathrm{I}_{\alpha,(\beta,\gamma)}^{(f_1(\iota(w_1)),2)}(C).
\end{cases}
\]
By construction, uniting the two permuted cylinders results in the cylinder from the Frankenstein process:
\[
\bigcup_{\smalllongtilde{C} \in \Psi_{T_1}(R_{\gs_0}, \boldsymbol{w}, \smallsP^-)}
\sigma_{\smalllongtilde{C}}(\longtilde{C})
=
\overline{\Phi}_{T_1}( R_{\gs_0},\boldsymbol{w}, \sP^-).
\]

\textbf{First part of the recursive step: $t \in [T_k, T_{k+1})$.} Assume that
$\Psi_t(R_{\gs_0},\boldsymbol{w},  \sP^-)$ has been constructed for all
$t \in [0,T_k]$, and that for each
$\longtilde{C} \in \Psi_{T_k}(R_{\gs_0}, \boldsymbol{w}, \sP^-)$
a permutation $\sigma_{\smalllongtilde{C}}$ has been specified so that
\begin{equation}\label{attk}
\bigcup_{\smalllongtilde{C} \in \Psi_{T_k}(R_{\gs_0}, \boldsymbol{w}, \smallsP^-)}
\sigma_{\smalllongtilde{C}}(\longtilde{C})
=
\overline{\Phi}_{T_k}(R_{\gs_0},\boldsymbol{w}, \sP^-).
\end{equation}
We will see later that the permutation $\sigma_{\smalllongtilde{C}}$ arises as the composition of the $k$ permutations from the Frankenstein matching; consequently, it is in general not a transposition and depends on the coordinates involved in the corresponding interactive events and on the sets $I^{\iota(w_k)}_{(\alpha, (\beta, \gamma))}(C)$.
For $t \in [T_k, T_{k+1})$, we set
\begin{equation}\label{ktokpo}
\Psi_t(R_{\gs_0},\boldsymbol{w},  \sP^-)
\coloneqq
\bigcup_{\smalllongtilde{C} \in \Psi_{T_k}(R_{\gs_0}, \boldsymbol{w}, \smallsP^-)}
\overline{\Theta}_{t - T_k}
\Big(\longtilde{C},\,
\iota(w_k),\,  \sigma^{-1}_{\smalllongtilde{C}}\big(\zeta_{T_k}(\sP^-)\big)
\Big).
\end{equation}
Here, for every cylinder present at time $T_k$, we trace back the set of compatible cylinders until just before the next interactive event. Since we want to link the messy process to the Frankenstein process on a path, we must account for the fact that, in the Frankenstein process, cylinders are permuted. In our construction, for every cylinder $\longtilde{C}\in \Psi_{T_k}(R_{\gs_0}, \boldsymbol{w}, \sP^-)$, the permutation $\sigma^{-1}_{\smalllongtilde{C}}$ applied to the time-shifted version of $\sP^-$ encodes this effect: line $i$ in the Frankenstein world corresponds to line $\sigma_{\smalllongtilde{C}}^{-1}(i)$ in the non-permuted world. Now, if, in the Frankenstein process, a tuple $J$ of lines is involved in an event of $\sP^-$, 
then this event affects the (permuted) cylinder $\sigma_{\smalllongtilde{C}}(\longtilde{C})$ at the lines $J$. Hence, to mimic this event, the non-permuted cylinder $\longtilde{C}$ needs to be affected at the lines $\sigma_{\smalllongtilde{C}}^{-1}(J)$, cf.~\eqref{eq.perm}. Note that the forward time shift $\zeta_{T_k}(\sP^-)$ compensates the backward time shift in $\bar \Theta_{t - T_k}$. Altogether, this leads us to consider, for every cylinder $\longtilde{C}$, the Poisson process $\sigma^{-1}_{\smalllongtilde{C}}(\zeta_{T_k}(\sP^-))$; this ends the motivation for \eqref{ktokpo}.
 
We now want to show that \eqref{attk} extends to the interval $[T_k,T_{k+1})$. To this end, we assign to every cylinder
\[
\overline{C}\in \overline{\Theta}_{t - T_k}\Big(\longtilde{C},\, \iota(w_k),\,\sigma^{-1}_{\smalllongtilde{C}}\big(\zeta_{T_{k}}(\sP^-)\big)\Big),
\qquad t\in[T_k,T_{k+1}),
\]
the permutation $\sigma_{\overline{C}} \coloneqq \sigma_{\smalllongtilde{C}}$. In the light of the above motivation, this implies for $t \in [T_k, T_{k+1})$ that
\begin{equation}\label{eq.permutation_Poisson}
\sigma_{\smalllongtilde{C}}\bigg(\overline{\Theta}_{t - T_k}
\Big(\longtilde{C},\,
\iota(w_k),\,  \sigma^{-1}_{\smalllongtilde{C}}\big(\zeta_{T_k}(\sP^-)\big)\Big)\bigg) = \overline{\Theta}_{t - T_k}
\big(\sigma_{\smalllongtilde{C}}(\longtilde{C}),\,
\iota(w_k),\, \zeta_{T_k}(\sP^-)\big)
\end{equation}
(this actually holds for any permutation $\sigma$ in the place of $\sigma_{\smalllongtilde{C}}$). This is the key to the extension we are aiming at.
\begin{lemma}\label{rec-lemma}
Assume that \eqref{attk} holds. Then, for all $t\in[T_k,T_{k+1})$, we have
\begin{equation*}
\bigcup_{\overline{C} \in \Psi_{t}( R_{\gs_0}, \boldsymbol{w}, \smallsP^-)}
\sigma_{\overline{C}}(\overline{C})
=
\overline{\Phi}_{t}( R_{\gs_0}, \boldsymbol{w}, \sP^-).
\end{equation*}
\end{lemma}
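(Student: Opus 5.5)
We will argue directly from the definition \eqref{ktokpo} and the equivariance identity \eqref{eq.permutation_Poisson}. Fix $t\in[T_k,T_{k+1})$. By construction every $\overline C\in\Psi_t$ lies in $\overline{\Theta}_{t-T_k}(\longtilde C,\iota(w_k),\sigma^{-1}_{\smalllongtilde C}(\zeta_{T_k}(\sP^-)))$ for some $\longtilde C\in\Psi_{T_k}$, and carries $\sigma_{\overline C}=\sigma_{\smalllongtilde C}$. Grouping the union by $\longtilde C$ and applying \eqref{eq.permutation_Poisson}, we obtain
\[
\bigcup_{\overline{C} \in \Psi_{t}} \sigma_{\overline{C}}(\overline{C})
=\bigcup_{\smalllongtilde{C}\in\Psi_{T_k}} \; \bigcup_{C'\in \overline{\Theta}_{t-T_k}(\sigma_{\smalllongtilde{C}}(\smalllongtilde{C}),\iota(w_k),\zeta_{T_k}(\smallsP^-))} C'.
\]
The inner union is, by Def.~\ref{def.ordered}, the set of configurations mapped under forward propagation along $\zeta_{T_k}(\sP^-)$ on $[0,t-T_k]$ into $\sigma_{\smalllongtilde C}(\longtilde C)$. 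Since on this interval no interactive event occurs (it is strictly before $T_{k+1}$, and the event at $T_k$ itself is already absorbed in $\Psi_{T_k}$), the path argument $\iota(w_k)$ plays no role, and the backward map is a composition of the operators $\ocoal{\alpha}{\beta}$, $\omutr{\alpha}$, $\omutb{\alpha}$, $\oftw{\alpha}{\Bs}$; each of these is a preimage map of a deterministic forward map and hence additive: preimages commute with unions (and disjointness is preserved).

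Therefore the double union equals the image under the same backward dynamics of $\bigcup_{\smalllongtilde C}\sigma_{\smalllongtilde C}(\longtilde C)$, which by the hypothesis \eqref{attk} is the single $R$-cylinder $\overline{\Phi}_{T_k}(R_{\gs_0},\bs w,\sP^-)$. It remains to identify $\overline{\Theta}_{t-T_k}(\overline{\Phi}_{T_k},\cdot,\zeta_{T_k}(\sP^-))$ with $\overline{\Phi}_t(R_{\gs_0},\bs w,\sP^-)$: by Def.~\ref{def.Frankenstein_path}, between interactive events the Frankenstein process uses exactly the same operators, and since these are single-valued and $R$-preserving on $\varSigma^R$ (Cor.~\ref{cor.R-preserving}; for FTW only \eqref{eq.selR} applies), the configuration process started from an $R$-cylinder stays a single cylinder equal to the Frankenstein cylinder. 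One also checks the time-shift bookkeeping: the events of $\zeta_{T_k}(\sP^-)$ in backward time $[0,t-T_k]$ are precisely those of $\sP^-$ in $(T_k,t]$.

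The main obstacle is the additivity step: one must make sure that the per-cylinder permuted Poisson processes $\sigma^{-1}_{\smalllongtilde C}(\zeta_{T_k}(\sP^-))$ all become the \emph{same} process $\zeta_{T_k}(\sP^-)$ after \eqref{eq.permutation_Poisson}, so that a common preimage map can be pulled out of the union. I would verify \eqref{eq.permutation_Poisson} carefully event by event via \eqref{eq.perm} (relabelling $\alpha,\beta,\Bs$ by $\sigma$ commutes with each operator's formula; for FTW one should note that the ordering of $\Bs_1$ in \eqref{eq.selB1} is irrelevant here because we only apply the union, not the individual pieces). Also care is needed since the $\longtilde C$'s in $\Psi_{T_k}$ need not have identical line sets; extending them by $\ast$'s to a common index set before taking the union handles this.
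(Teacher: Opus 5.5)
Your proposal is correct and follows the paper's proof step for step. The chain is: \eqref{ktokpo}, then the permutation identity \eqref{eq.permutation_Poisson}, then additivity of the configuration process between interactive events, then the hypothesis \eqref{attk}, and finally the definition of the Frankenstein process on a path. Your additional care points are sensible refinements of steps the paper takes for granted: equivariance only at the level of unions because of the ordering in \eqref{eq.selB1}, $\ast$-extension to a common index set, and the time-shift bookkeeping.
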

\begin{proof}
Using \eqref{ktokpo} in the first step, \eqref{eq.permutation_Poisson} in the second, the additivity of the configuration process in the third, \eqref{attk} in the fourth, and the definition of the Frankenstein process on a path in the last, we get
\begin{align*}
\bigcup_{\overline{C} \in \Psi_{t}( R_{\gs_0}, \boldsymbol{w}, \smallsP^-)}
\sigma_{\overline{C}}(\overline{C})&=\bigcup_{\smalllongtilde{C} \in \Psi_{T_k}(R_{\gs_0}, \boldsymbol{w}, \smallsP^-)} \sigma_{\smalllongtilde{C}}\bigg(\overline{\Theta}_{t - T_k}
\Big(\longtilde{C},\,
\iota(w_k),\,  \sigma^{-1}_{\smalllongtilde{C}}\big(\zeta_{T_k}(\sP^-)\big)
\Big)\bigg)\\
&=\bigcup_{\smalllongtilde{C} \in \Psi_{T_k}(R_{\gs_0}, \boldsymbol{w}, \smallsP^-)} \overline{\Theta}_{t - T_k}
\big(\sigma_{\smalllongtilde{C}}(\longtilde{C}),\,
\iota(w_k),\,  \zeta_{T_k}(\sP^-)
\big)\\
&=\overline{\Theta}_{t - T_k}
\Big(\bigcup_{\smalllongtilde{C} \in \Psi_{T_k}( R_{\gs_0}, \boldsymbol{w}, \smallsP^-)}\sigma_{\smalllongtilde{C}}(\longtilde{C}),\, 
\iota(w_k),\, \zeta_{T_k}(\sP^-)
\Big)\\
&=\overline{\Theta}_{t - T_k}
\big(\overline{\Phi}_{T_k}( R_{\gs_0}, \boldsymbol{w}, \sP^-) ,\,
\iota(w_k),\, \zeta_{T_k}(\sP^-)
\big)\\
&=\overline{\Phi}_{t}( R_{\gs_0}, \boldsymbol{w}, \sP^-).
\end{align*}
\end{proof}

\textbf{Second part of the recursive step: $t=T_{k+1}$.} Suppose now that at time $T_{k+1}$, $\sP^{-}$ experiences an event 
$\ineminus{\alpha}{\beta}{\gamma}$. Let us first describe the corresponding event in terms of the permuted Poisson process 
$\sigma^{-1}(\sP^{-})$; so the lines involved are
$\sigma^{-1}(\alpha)$, $\sigma^{-1}(\beta)$, and $\sigma^{-1}(\gamma)$. The resulting event in the Frankenstein world depends on the relative order of the permuted labels 
$\sigma^{-1}(\beta)$ and $\sigma^{-1}(\gamma)$. More precisely, if 
$
\sigma^{-1}(\beta) \leq \sigma^{-1}(\gamma),
$
the corresponding event is
$
\bigl(\ineminus{\sigma^{-1}(\alpha)}{\sigma^{-1}(\beta)}{\sigma^{-1}(\gamma)},\, w_{k+1}\bigr).
$
Conversely, if 
$
\sigma^{-1}(\beta) > \sigma^{-1}(\gamma),
$
the event is
$
\bigl(\ineminus{\sigma^{-1}(\alpha)}{\sigma^{-1}(\gamma)}{\sigma^{-1}(\beta)},\, w_{k+1}^{c}\bigr).
$
Define
\[
\beta(\sigma^{-1}) \coloneq \sigma^{-1}(\beta) \wedge \sigma^{-1}(\gamma),
\qquad
\gamma(\sigma^{-1}) \coloneq \sigma^{-1}(\beta) \vee \sigma^{-1}(\gamma),
\]
and
\[
w_{k+1}(\sigma^{-1}) \coloneq
\begin{cases}
w_{k+1}, & \text{if } \sigma^{-1}(\beta) \le \sigma^{-1}(\gamma),\\
w_{k+1}^{c}, & \text{otherwise}.
\end{cases}
\]
The event in the messy process at time $T_{k+1}$ can now be written compactly as
\[
\bigl(\ineminus{\sigma^{-1}(\alpha)}{\beta(\sigma^{-1})}{\gamma(\sigma^{-1})},\, w_{k+1}(\sigma^{-1})\bigr).
\]
According to the same principle as before, we define the messy process at time $T_{k+1}$ as 
\begin{equation}\label{tkvstkm}
\Psi_{T_{k+1}}( R_{\gs_0}, \boldsymbol{w}, \sP^-)
\coloneqq
\bigcup_{C \in \Psi_{T_{k+1}-}( R_{\gs_0}, \boldsymbol{w}, \smallsP^-)}
\!\!\!\!\!\!\!\!\left\{
\mathrm{I}_{\sigma^{-1}_C(\alpha),(\beta(\sigma^{-1}_C),\gamma(\sigma^{-1}_C))}^{(\iota(w_{k+1}(\sigma^{-1}_C)),1)}(C),\,
\mathrm{I}_{\sigma^{-1}_C(\alpha),(\beta(\sigma^{-1}_C),\gamma(\sigma^{-1}_C))}^{(f_{k+1}(\iota(w_{k+1}(\sigma^{-1}_C))),2)}(C)
\right\}.
\end{equation}
Finally, for every
$\longtilde{C} \in \Psi_{T_{k+1}}( R_{\gs_0}, \boldsymbol{w}, \sP^-)$,
define the associated permutation $\sigma_{\smalllongtilde{C}}$ by
\[
\sigma_{\smalllongtilde{C}} \coloneqq
\begin{cases}
\sigma_C, & \text{if }
\longtilde{C}
=
\mathrm{I}_{\sigma^{-1}_C(\alpha),(\beta(\sigma^{-1}_C),\gamma(\sigma^{-1}_C))}^{(\iota(w_{k+1}(\sigma^{-1}_C)),1)}(C)
\text{ for some }
C \in \Psi_{T_{k+1}-}( R_{\gs_0}, \boldsymbol{w}, \sP^-), \\[6pt]
\sigma_{k+1}\circ \sigma_C, & \text{if }
\longtilde{C}
=
\mathrm{I}_{\sigma^{-1}_C(\alpha),(\beta(\sigma^{-1}_C),\gamma(\sigma^{-1}_C))}^{(f_{k+1}(\iota(w_{k+1}(\sigma^{-1}_C))),2)}(C)
\text{ for some }
C \in \Psi_{T_{k+1}-}( R_{\gs_0}, \boldsymbol{w}, \sP^-).
\end{cases}
\]
The following result shows that these permutations have the desired effect.

\begin{lemma}\label{attkpo}
Assume that \eqref{attk} holds. Then
\[
\bigcup_{\smalllongtilde{C} \in \Psi_{T_{k+1}}( R_{\gs_0}, \boldsymbol{w}, \smallsP^-)}
\sigma_{\smalllongtilde{C}}(\longtilde{C})
=
\overline{\Phi}_{T_{k+1}}( R_{\gs_0}, \boldsymbol{w}, \sP^-).
\]
\end{lemma}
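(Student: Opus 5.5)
By Lemma~\ref{rec-lemma}, \eqref{attk} extends to the whole interval $[T_k,T_{k+1})$. Taking left limits, this gives
\[
\bigcup_{C\in\Psi_{T_{k+1}-}}\sigma_C(C)=\overline{\Phi}_{T_{k+1}-}(R_{\gs_0},\bs w,\sP^-)=:K,
\]
and $K$ is a single $R$-cylinder. By definition of the Frankenstein process on paths, $\overline{\Phi}_{T_{k+1}}=\widehat{K}^{w_{k+1}}$, where the latter is built from the event $\ineminus{\alpha}{\beta}{\gamma}$ as in Prop.~\ref{prop_frankenstein_matching}. Explicitly,
\[
\widehat{K}^{w_{k+1}}=\mathrm{I}^{(\iota(w_{k+1}),1)}_{\alpha,(\beta,\gamma)}(K)\cup\sigma_{k+1}\Big(\mathrm{I}^{(f_{k+1}(\iota(w_{k+1})),2)}_{\alpha,(\beta,\gamma)}(K)\Big).
\]
The plan is to push the operators $\mathrm{I}^{(v,i)}$ through the union over $C$ and through the permutations $\sigma_C$, and then to regroup the terms.

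\textbf{Step 1 (equivariance).} For any permutation $\sigma$ of $[N]$, any cylinder $C$, and $i\in\{1,2\}$, I will show
\[
\sigma\Big(\mathrm{I}^{(v(\sigma^{-1}),i)}_{\sigma^{-1}(\alpha),(\beta(\sigma^{-1}),\gamma(\sigma^{-1}))}(C)\Big)=\mathrm{I}^{(v,i)}_{\alpha,(\beta,\gamma)}(\sigma(C)),
\]
where $v(\sigma^{-1})=v$ if $\sigma^{-1}(\beta)\le\sigma^{-1}(\gamma)$ and $v^c$ otherwise. Under the identification \eqref{eq.interactive.arrow}, both sides describe the same AIG event $\ine{\cdot}{\cdot}{\cdot}$ with the same incoming and checking roles, just relabelled. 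The operators $\mathrm{I}^i$ in \eqref{eq.nint1}--\eqref{eq.nint3} are defined coordinatewise by role, so they commute with relabelling. The flip $v\mapsto v^c$ exactly compensates for the swap of order between $\beta$ and $\gamma$ in the quasi-event. The degenerate case $\beta=\gamma$ needs a separate check, since the order cannot swap there. One also has to confirm that $\iota$ commutes with complementation, i.e.\ $\iota(w^c)=\iota(w)^c$, which holds by definition of $\iota$.

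\textbf{Step 2 (additivity).} Each $\mathrm{I}^{(v,i)}$ acts on disjoint unions of cylinders cylinder by cylinder, and it is additive. Hence
\[
\mathrm{I}^{(v,i)}(K)=\bigcup_{C}\mathrm{I}^{(v,i)}(\sigma_C(C)).
\]
Applying Step~1 with $\sigma=\sigma_C$ and $v=\iota(w_{k+1})$ (for $i=1$) turns each term into $\sigma_C$ applied to the first cylinder in \eqref{tkvstkm}. For $i=2$, applying $\sigma_{k+1}$ on top yields $\sigma_{k+1}\circ\sigma_C$ applied to the second cylinder in \eqref{tkvstkm}. These are exactly the permutations $\sigma_{\smalllongtilde{C}}$ prescribed above. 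Taking the union over $C$ then gives the claimed identity.

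\textbf{Main obstacle.} The critical point is that $f_{k+1}$, $\sigma_{k+1}$ and the index $f(\cdot)$ are all determined by the merged cylinder $K$, not by the individual pieces $C$. This is why \eqref{tkvstkm} uses $f_{k+1}$ evaluated at $w_{k+1}(\sigma_C^{-1})$. For the second cylinder, I must check the consistency relation
\[
f_{k+1}\big(\iota(w_{k+1}(\sigma_C^{-1}))\big)=\big(f_{k+1}(\iota(w_{k+1}))\big)(\sigma_C^{-1}),
\]
that is, that the bijection $f$ commutes with complementation. From \eqref{eq.virtual_pos}, $f$ is either the identity or $v\mapsto v^c$, and both commute with $v\mapsto v^c$. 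So the relation should hold once the conventions are aligned. I expect this bookkeeping, together with the additivity of $\mathrm{I}$ on overlapping representations when the $\sigma_C(C)$ are not disjoint, to be the delicate part. For the latter, I would use that the $\mathrm{I}^{(v,i)}$ are defined on cylinders, are monotone, and commute with unions.
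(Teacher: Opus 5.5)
Your proposal is correct and follows essentially the same route as the paper. The paper's proof is a chain of equalities using exactly your ingredients, read in the opposite direction: the definition of $\Psi_{T_{k+1}}$ and of the permutations, the equivariance \eqref{eq.permutation_Poisson} for the interactive event, additivity of $\mathrm{I}^{(v,i)}$, Lemma~\ref{rec-lemma} at $T_{k+1}-$, and the definition of $\overline{\Phi}$ at $T_{k+1}$. Your explicit checks (that the flip $v\mapsto v^c$ compensates the reordering of $\beta,\gamma$, and that $f_{k+1}$ and $\iota$ commute with complementation) spell out what the paper covers only as ``another instance of the key feature''.
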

\begin{proof}
Indeed, we have
\begin{align*}
&\bigcup_{\smalllongtilde{C} \in \Psi_{T_{k+1}}( R_{\gs_0}, \boldsymbol{w}, \smallsP^-)}
\sigma_{\smalllongtilde{C}}(\longtilde{C})\\
&\quad=\bigcup_{{C} \in \Psi_{T_{k+1}-}( R_{\gs_0}, \boldsymbol{w}, \smallsP^-)}
\sigma_C\big(\mathrm{I}_{\sigma^{-1}_C(\alpha),(\beta(\sigma^{-1}_C),\gamma(\sigma^{-1}_C))}^{(\iota(w_{k+1}(\sigma^{-1}_C)),1)}(C)\big )\cup\sigma_{k+1}\Big(\sigma_C\big(\mathrm{I}_{\sigma^{-1}_C(\alpha),(\beta(\sigma^{-1}_C),\gamma(\sigma^{-1}_C))}^{(f_{k+1}(\iota(w_{k+1}(\sigma^{-1}_C))),2)}(C)\big)\!\Big)\\
&\quad=\bigcup_{{C} \in \Psi_{T_{k+1}-}( R_{\gs_0}, \boldsymbol{w}, \smallsP^-)}
\mathrm{I}_{\alpha,(\beta,\gamma)}^{(\iota(w_{k+1}),1)}\big(\sigma_C(C)\big)\cup\sigma_{k+1}\Big(\mathrm{I}_{\alpha,(\beta,\gamma)}^{(f_{k+1}(\iota(w_{k+1}),2))}\big(\sigma_C(C)\big)\!\Big)\\
&\quad=\mathrm{I}_{\alpha,(\beta,\gamma)}^{(\iota(w_{k+1}),1)}\Big(\bigcup_{{C} \in \Psi_{T_{k+1}-}( R_{\gs_0}, \boldsymbol{w}, \smallsP^-)}\!\!\!\!\!\!\!\!\sigma_C(C)\Big)\cup\sigma_{k+1}\bigg(\mathrm{I}_{\alpha,(\beta,\gamma)}^{(f_{k+1}(\iota(w_{k+1})),2)}\Big(\bigcup_{{C} \in \Psi_{T_{k+1}-}( R_{\gs_0}, \boldsymbol{w}, \smallsP^-)}\!\!\!\!\!\!\!\!\sigma_C(C)\Big)\!\!\bigg)\\
&\quad=\mathrm{I}_{\alpha,(\beta,\gamma)}^{(\iota(w_{k+1}),1)}\Big(\overline{\Phi}_{T_{k+1}-}( R_{\gs_0}, \boldsymbol{w}, \sP^-)\Big)\cup\sigma_{k+1}\bigg(\mathrm{I}_{\alpha,(\beta,\gamma)}^{(f_{k+1}(\iota(w_{k+1})),2)}\Big(\overline{\Phi}_{T_{k+1}-}( R_{\gs_0}, \boldsymbol{w}, \sP^-)\Big)\!\!\bigg)\\
&\quad=\overline{\Phi}_{T_{k+1}}( R_{\gs_0}, \boldsymbol{w}, \sP^-).
\end{align*}
The first identity follows by the definition of $\Psi_{T_{k+1}}( R_{\gs_0}, \boldsymbol{w}, \sP^-) $ in \eqref{tkvstkm} and the subsequent definition of the permutations $\sigma_{\smalllongtilde{C}}$. The second equality is another instance of the key feature \eqref{eq.permutation_Poisson}: the outcome of an interactive event in the permuted Poisson process $\sigma^{-1}_C(\sP^-)$ affecting the cylinder $C$ is identical to the outcome of an interactive event in $\sP^-$ affecting the permuted cylinder $\sigma_C(C)$. 
The third identity is once more the additivity of the interactive event. In the penultimate step, we used Lemma~\ref{rec-lemma}, and in the last one the definition of the Frankenstein process at interactive events. This completes the proof.
\end{proof}
Summarizing, the messy process $\Psi$ satisfies the following property.

\begin{proposition}[Assembling the Frankenstein process]\label{step1}
For every $t \ge 0$ and every 
$\boldsymbol{w} \in \{\ddownarrow, \uuparrow\}^{\Nb}$, 
there exists, for every 
$C \in \Psi_t(R_{\gs_0},\boldsymbol{w},  \sP^-)$, 
a permutation $\smalllongtilde{\sigma}_C$ such that
\[
\overline{\Phi}_t( R_{\gs_0}, \boldsymbol{w}, \sP^-)
=
\bigcup_{C \in \Psi_t(R_{\gs_0},\boldsymbol{w},  \smallsP^-)}
\smalllongtilde{\sigma}_C(C).
\]
\end{proposition}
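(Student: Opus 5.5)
We argue by induction over the successive interactive times $T_1<T_2<\cdots$ of $\sP^-$, taking $\smalllongtilde{\sigma}_C\coloneqq\sigma_C$, the permutation attached to each cylinder in the construction of the messy process. Since $\sP^-$ has finitely many events in any bounded interval, every $t\ge 0$ lies in $[0,T_1)$ or in some $[T_k,T_{k+1})$, with $T_0\coloneqq 0$ and $T_{k+1}=\infty$ if there are only $k$ interactive events. It therefore suffices to show that the identity
\[
\bigcup_{C\in\Psi_t(R_{\gs_0},\boldsymbol{w},\smallsP^-)}\sigma_C(C)=\overline{\Phi}_t(R_{\gs_0},\boldsymbol{w},\sP^-)
\]
holds on each of these intervals.

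\emph{Base case.} On $[0,T_1)$ the messy process equals $\overline{\Theta}_t(R_{\gs_0},\iota(\boldsymbol w),\sP^-)$. All transitions before $T_1$ are single-valued and $R$-preserving by Cor.~\ref{cor.R-preserving}. On $R$-cylinders these transitions coincide with those in Def.~\ref{def.Frankenstein_path}, since FTW events reduce to \eqref{eq.selR} and beneficial mutations map to $\varnothing$. Hence $\Psi_t$ is the singleton $\{\overline{\Phi}_t\}$, and we take the identity permutation.

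At $T_1$, the definition of $\Psi_{T_1}$ together with the choice $\sigma_{\smalllongtilde C}\in\{\mathrm{id},\sigma_1\}$ gives
\[
\mathrm{I}^{(\iota(w_1),1)}(C)\cup\sigma_1\bigl(\mathrm{I}^{(f_1(\iota(w_1)),2)}(C)\bigr)=\widehat{C}^{w_1},
\]
which is the Frankenstein matching of Prop.~\ref{prop_frankenstein_matching} as reindexed in Remark~\ref{Frankenstein_step}. This establishes \eqref{attk} for $k=1$.

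\emph{Induction step.} Assume \eqref{attk} holds at $T_k$. Lemma~\ref{rec-lemma} then extends the identity to all $t\in[T_k,T_{k+1})$, and Lemma~\ref{attkpo} yields \eqref{attk} at $T_{k+1}$. Induction over $k$ therefore covers every $t$.

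\emph{Main obstacle.} The base case and the bookkeeping are routine. The delicate point is that the two lemmas rely on two facts which must hold for the relabelled events used in the messy process.

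The first is the equivariance \eqref{eq.permutation_Poisson}. It says that running the configuration process on $\sigma^{-1}(\sP^-)$ from $C$ and then applying $\sigma$ gives the same result as running it on $\sP^-$ from $\sigma(C)$. This must also hold at interactive events, where relabelling can swap the order of $\beta$ and $\gamma$. That swap is exactly what the adjusted direction $w_{k+1}(\sigma^{-1})$ compensates for, via the identification \eqref{eq.interactive.arrow} and the relation $\mathrm I_{\alpha,\gamma,\beta}^i=\sigma(\mathrm I^i_{\alpha,\beta,\gamma}(\sigma\,\cdot))$.

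The second is additivity of the operators $\mathrm I^{(v,i)}$ over disjoint unions. We use it to pull the union inside in the third equality of Lemma~\ref{attkpo}. There, one fixed pair $(f_{k+1},\sigma_{k+1})$, which is determined by the Frankenstein cylinder $\overline{\Phi}_{T_{k+1}-}$, is applied to all cylinders $C$ at once. I would check both facts directly from the cylinder-wise definitions \eqref{eq.transition_coalB}--\eqref{eq.nint3}, since every operator acts coordinatewise on cylinders.
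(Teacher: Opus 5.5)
Your proposal is correct and follows the paper's own proof. On $[0,T_1]$ the identity holds by construction: before $T_1$ the messy process is the singleton $\{\overline{\Phi}_t\}$, and at $T_1$ it is given by the Frankenstein matching. The general case then follows by induction over the interactive times, with Lemma~\ref{rec-lemma} covering $[T_k,T_{k+1})$ and Lemma~\ref{attkpo} covering $T_{k+1}$; your remarks on equivariance and additivity just spell out what those two lemmas already use.
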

\begin{proof}
We have already established the result on $[0,T_1]$. The general case follows by induction, using Lemmas~\ref{attk} and~\ref{attkpo}.
\end{proof}

Figure \ref{fig.messy} gives an example of the construction of the messy process.

\begin{figure}[hbtp]
\scalebox{0.65}{
\begin{tikzpicture}[
    grow=east,  
 edge from parent/.style={
        draw, 
        double, 
        double distance=1.5pt, 
        -{Implies[width=8pt, length=5pt]}, 
        shorten >=-2pt,
        shorten <=3pt
    },
    parent anchor=east,
    child anchor=west, 
    every node/.style={
        anchor=west,
        inner xsep=2pt,
        inner ysep=2pt,
        font=\sffamily,
        align=center
    },
    boxed/.style={line width=1.2pt, rounded corners=2pt, inner sep=0pt},
    level 1/.style={
        level distance=3cm,
        sibling distance=5cm,
        nodes={text width=5cm, minimum width=3.5cm} 
    },
    level 2/.style={
        level distance=4.5cm,
        sibling distance=2.5cm
    }
]
\node {$(RR\ast)$}
    child {
        node {
            \begin{tabular}{cc} $\onintminus{\alpha}{\beta}{\gamma}^{(\downarrow, 1)}(RR\ast)$  & $\onintminus{\alpha}{\beta}{\gamma}^{(\uparrow, 2)}(RR\ast)$  \\[5pt]
              $(R\,R\,R)$ & ${\color{white}{\ast \, \ast}}\varnothing{\color{white}{\ast \, \ast}}$ \\[5pt]
                $\textrm{id}$ & $\textrm{id}$ 
            \end{tabular}
        }
        child {
            node {
            \begin{tabular}{cccc}
            $\onintminus{\alpha}{\beta}{\gamma}^{(\downarrow, 1)}(RRR)$ & $\onintminus{\alpha}{\beta}{\gamma}^{(\downarrow, 2)}(RRR)$ & $\onintminus{\alpha}{\beta}{\gamma}^{(\downarrow, 1)}(\varnothing)$ &  $\quad \ \  \onintminus{\alpha}{\beta}{\gamma}^{(\downarrow, 2)}(\varnothing)$\\[10pt]
            $(RRRR)$ & $\varnothing$ & $\varnothing$ & $\quad  \ \  \varnothing$ \\ [5pt]$\mathrm{id}$&$\mathrm{id}$&$\mathrm{id}$&$\quad \ \  \mathrm{id}$
                \end{tabular}
            }
        edge from parent node [below, sloped] {$(RRRR)$}
        }
        child {
            node {
                \begin{tabular}{cccc}
           $\onintminus{\alpha}{\beta}{\gamma}^{(\uparrow, 1)}(RRR)$ & $\onintminus{\alpha}{\beta}{\gamma}^{(\uparrow, 2)}(RRR)$ & $\onintminus{\alpha}{\beta}{\gamma}^{(\uparrow, 1)}(\varnothing)$ & $\quad \ \ \onintminus{\alpha}{\beta}{\gamma}^{(\uparrow, 2)}(\varnothing)$\\[10pt]
          $(RR\!\ast \!R)$ & $(BR\!\ast\! R)$ & $\varnothing$ & $\quad \ \ \varnothing$  \\[5pt]$\mathrm{id}$&$\mathrm{id}$&$\mathrm{id}$&$\quad \ \ \mathrm{id}$
                \end{tabular}
            }
            edge from parent node [above, sloped] {$(\ast R\!\!\!\ast\!\!\!R)$}
        }
    edge from parent node [below, sloped] {$(RRR)$}
    }
    child {
        node {
            \begin{tabular}{cc}
             $\onintminus{\alpha}{\beta}{\gamma}^{(\uparrow, 1)}(RR\ast)$ & $\onintminus{\alpha}{\beta}{\gamma}^{(\downarrow, 2)}(RR\ast)$ \\[5pt]$(R\,R\,\ast)$ & 
            $(\ast \, R \, B)$ \\[5pt]
            ${\textrm{id}}$ & $(1,3)$
            \end{tabular}
        }
        child {
            node {
                \begin{tabular}{cccc}
                $\onintminus{\alpha}{\beta}{\gamma}^{(\downarrow, 1)}(RR\ast)$ & $\onintminus{\alpha}{\beta}{\gamma}^{(\downarrow, 2)}(RR\ast)$ & $\onintminus{\alpha}{\beta}{\gamma}^{(\downarrow, 1)}(\ast R B)$ & $\onintminus{\alpha}{\beta}{\gamma}^{(\downarrow, 2)}(\ast R B)$\\[10pt]
                $\,(RRR\ast)\,$ &  $\,(\ast R B \ast)\,$ & $(RRB\ast)$ & $\varnothing$ \\[5pt]
$\mathrm{id}$&$\mathrm{id}$&$\mathrm{id}\circ (1,3)$&$\mathrm{id}\circ(1,3)$
                \end{tabular}
            }
        edge from parent node [below, sloped] {$(\ast R\!\!\!\ast\!\!\!\ast)$}
        }
        child {
            node {
                \begin{tabular}{cccc}          $\onintminus{\alpha}{\beta}{\gamma}^{(\uparrow, 1)}(RR\ast)$ & $\onintminus{\alpha}{\beta}{\gamma}^{(\uparrow, 2)}(RR\ast)$ & $\onintminus{\alpha}{\beta}{\gamma}^{(\uparrow, 1)}(\ast R B)$ & $\onintminus{\alpha}{\beta}{\gamma}^{(\uparrow, 2)}(\ast R B)$
                \\[10pt]
                {$(RR\!\ast\!\ast)$} & $\varnothing$ & $(\ast RB \ast)$& $\varnothing$ \\[5pt]
$\mathrm{id}$&$\mathrm{id}$&$\mathrm{id}\circ (1,3)$&$\mathrm{id}\circ (1,3)$
                \end{tabular}
            }
        edge from parent node [above, sloped] {$(\ast R\!\!\!\ast\!\!\!\ast)$}
        }
    edge from parent node [above, sloped] {$(\ast R\ast)$}
    }; 
\end{tikzpicture}}
\caption{Construction of the messy process for two interactive branching events and all four paths $\bs w \in \{\ddownarrow, \uuparrow\}^2$. For simplicity, we assume that no events take place in $(T_1,T_2)$. The first event acts on $C=(C_1, C_2, C_4)=(RR\ast)$, with $\alpha=1, \beta=2, \gamma=4$. The resulting Frankenstein cylinders are depicted on the arrows. The messy process at time $T_1$ has an upper branch (representing $w_1=\uuparrow)$ and a lower branch (representing $w_1=\uuparrow)$, each consisting of the cylinders $\onintminus{\alpha}{\beta}{\gamma}^{\iota(w_1, 1)}(C)$ and $\onintminus{\alpha}{\beta}{\gamma}^{f(\iota(w_1)), 2)}(C)$ (first line), where the function $f$ depends on $C, \onintminus{\alpha}{\beta}{\gamma}^{\iota(w_1, 1)}(C)$ and $\onintminus{\alpha}{\beta}{\gamma}^{\iota(w_1, 2)}(C)$. The explicit cylinders are shown in the second line. The third line keeps track of the permutations needed for the Frankenstein matching, but without applying them to the cylinders. The second interactive event in $\sP^-$ acts on $C=(C_1, C_2, C_4)$ with $\alpha=1, \beta=2, \gamma=3$ and hence uses $C_3^\ast=\ast$. The first event of the time-shifted and permuted process $(1, 3)(\zeta_{T_1}(\sP^-))$ acts on $C=(C_1,C_2,C_4)$ with $\alpha=4$, $\beta=2$, $\gamma=3$. The function $f$ needed for the outcome of these events is determined by the outcome of the operators $\onintminus{\alpha}{\beta}{\gamma}^{\iota(w_2, 1)}$ and $\onintminus{\alpha}{\beta}{\gamma}^{\iota(w_2, 2)}$ acting on the Frankenstein cylinders that would have been established in the previous event.}
\label{fig.messy}
\end{figure}

\subsection{Connecting all processes}
Now that we have established the messy process and its key features, it remains to connect it to the Frankenstein process and the configuration process on paths and thus show \eqref{eq.remaining_av}. The link from the Frankenstein process on the path $\bs w$ to the messy process on the same path follows by construction.

\begin{corollary}\label{connection_Frankenstein_messy}
For $\Gamma^{(i)}$ as in Lemma \ref{pre-duality}, we have
$$\Pb\big(\Gamma^{(i)} \in \overline{\Phi}_t( R_{\gs_0}, \boldsymbol{w}, \sP^-)\mid \sP^-\big)=\Pb\big(\Gamma^{(i)} \in\Psi_t(R_{\gs_0},\boldsymbol{w},  \sP^-)\mid \sP^-\big).$$
\end{corollary}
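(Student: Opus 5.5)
Conditionally on $\sP^-$, I will reduce both sides to sums over cylinders of product probabilities, using exchangeability of $\Gamma^{(i)}$. By Prop.~\ref{step1},
\[
\overline{\Phi}_t(R_{\gs_0},\bs w,\sP^-)=\bigcup_{C\in\Psi_t}\smalllongtilde{\sigma}_C(C),
\]
and the left-hand side is a single $R$-cylinder. The proof then rests on two facts. First, the permuted pieces $\smalllongtilde{\sigma}_C(C)$, $C\in\Psi_t$, are pairwise disjoint. Second, $\Pb(\Gamma^{(i)}\in\sigma(C))=\Pb(\Gamma^{(i)}\in C)$ for every permutation $\sigma$ of $[N]$. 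The second fact is immediate, because the law of $\Gamma^{(i)}$ is uniform over configurations with $i$ unfit entries and hence invariant under relabelling of lines; Lemma~\ref{prop_average1} also gives this directly, since the probability depends only on $n_R(C)$ and $n_B(C)$. Given disjointness, additivity yields
\[
\Pb\big(\Gamma^{(i)}\in\overline{\Phi}_t\mid\sP^-\big)=\sum_{C\in\Psi_t}\Pb\big(\Gamma^{(i)}\in\smalllongtilde{\sigma}_C(C)\mid\sP^-\big)=\sum_{C\in\Psi_t}\Pb\big(\Gamma^{(i)}\in C\mid\sP^-\big).
\]

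The right-hand side $\Pb(\Gamma^{(i)}\in\Psi_t\mid\sP^-)$ should be read in the same way, namely as a sum over the cylinders of $\Psi_t$ counted with multiplicity, because these cylinders come from different ``worlds''. If the pieces happen to be disjoint as subsets, this is also literally the probability of their union. I will state this reading explicitly, since it is what the averaging step will later require.

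The main work is proving disjointness of the pieces, and I would do it by induction along the recursive construction of $\Psi$. On $[0,T_1)$ there is only one cylinder, so there is nothing to prove. At $T_k$, each parent $C$ produces two children. After applying $\sigma_C$, these are exactly the two cylinders used in the Frankenstein matching of Prop.~\ref{prop_frankenstein_matching} for the cylinder $\sigma_C(C)$, one carrying the extra permutation $\sigma_{k+1}$; this is the computation already made in the proof of Lemma~\ref{attkpo}. I therefore need the union in Prop.~\ref{prop_frankenstein_matching} to be disjoint, and I would verify this row by row from Tables~\ref{tab.Frankenstein_matching} and~\ref{tab.Frankenstein_matching_beta}. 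In every row the two matched pieces differ by an $R$ versus a $B$ at some coordinate, or one of them is empty; for example $(R\ast R)$ and $\sigma(\ast RB)=(B R R)$ are disjoint in the first coordinate. Pieces descending from different parents are disjoint because the operators $\onint{\alpha}{\beta}{\gamma}^{1},\onint{\alpha}{\beta}{\gamma}^{2}$ and all other transitions are additive and map disjoint inputs to disjoint outputs. This holds because each transition is a preimage under forward type propagation, and preimages of disjoint sets are disjoint; the case split in~\eqref{eq.selB1} is likewise disjoint by construction.

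Between interactive events, relation~\eqref{eq.permutation_Poisson} transports disjointness, because $\overline{\Theta}$ run on the permuted Poisson process commutes with the permutations. Running it on disjoint permuted inputs therefore gives disjoint outputs, as in the proof of Lemma~\ref{rec-lemma}. I expect the inductive disjointness bookkeeping at the interactive times to be the main obstacle, since one must make sure that the permutation $\sigma_{k+1}$ applied to only one branch does not create overlaps with the cylinders of other parents. The cleanest route is to argue entirely in the permuted, Frankenstein-world picture, where all pieces are obtained by additive transitions from the disjoint pieces at time $T_k$.
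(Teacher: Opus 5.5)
Your route is the paper's: the corollary comes from Prop.~\ref{step1} plus exchangeability of $\Gamma^{(i)}$. The two points you add are exactly what the paper's one-line proof leaves implicit, and both are needed.

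The first point concerns the left-hand side. Additivity there requires the permuted pieces $\tilde\sigma_C(C)$ to be pairwise disjoint.

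The second point concerns the right-hand side. It must be read as $\sum_{C\in\Psi_t}\Pb(\Gamma^{(i)}\in C\mid\sP^-)$, because the unpermuted cylinders genuinely overlap. On one branch of Fig.~\ref{fig.messy} at $T_1$, the pair $(R\,R\,\ast)$, $(\ast\,R\,B)$ shares $(R\,R\,B)$, so the literal union would in general give a smaller value. The sum reading is also the one used later in \eqref{eq.second_event}.

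The step that fails as written is the induction at an interactive time $T_{k+1}$. You say the two children of a piece $C$ are the Frankenstein matching of $\sigma_C(C)$, checked against the tables. That is not so. The matching data $f_{k+1}$ and $\sigma_{k+1}$ are fixed by the whole cylinder $\overline{\Phi}_{T_{k+1}-}$. Moreover, $\sigma_C(C)$ is in general not an $R$-cylinder; in Fig.~\ref{fig.messy} it can be $(B\,R\,\ast)$. So Tables~\ref{tab.Frankenstein_matching} and~\ref{tab.Frankenstein_matching_beta} say nothing about it.

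Likewise, ``disjoint inputs give disjoint outputs'' only covers two children of the same kind. The critical pair is a first-kind child of $C$ against a second-kind child of $C'$, possibly with $C'=C$. These come from different operators, and one of them carries the extra $\sigma_{k+1}$. This is exactly the overlap you were worried about, and your sketch does not yet exclude it.

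The repair is a containment argument in the Frankenstein world. Put $v=\iota(w_{k+1})$.
\begin{enumerate}
\item Each $\mathrm{I}^{(v,j)}_{\alpha,(\beta,\gamma)}$ is the preimage under forward type propagation with fixed roles, intersected with a fixed condition on the type of the checking line. Hence it is monotone and preserves disjointness.
\item By the induction hypothesis, the pieces $\sigma_C(C)$ are pairwise disjoint with union $\overline{\Phi}_{T_{k+1}-}$.
\item Therefore the first-kind pieces $\mathrm{I}^{(v,1)}_{\alpha,(\beta,\gamma)}(\sigma_C(C))$ are pairwise disjoint and lie in $\mathrm{I}^{(v,1)}_{\alpha,(\beta,\gamma)}(\overline{\Phi}_{T_{k+1}-})$.
\item Likewise, the second-kind pieces $\sigma_{k+1}\bigl(\mathrm{I}^{(f_{k+1}(v),2)}_{\alpha,(\beta,\gamma)}(\sigma_C(C))\bigr)$ are pairwise disjoint and lie in $\sigma_{k+1}\bigl(\mathrm{I}^{(f_{k+1}(v),2)}_{\alpha,(\beta,\gamma)}(\overline{\Phi}_{T_{k+1}-})\bigr)$.
\item The row-by-row table check is then needed only once, for the $R$-cylinder $\overline{\Phi}_{T_{k+1}-}$. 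There it shows that these two containers are disjoint.
\end{enumerate}
Combined with your preimage argument between interactive times, this closes the induction.
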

\begin{proof}
The statement follows directly from Prop.~\ref{step1} using exchangeability.
\end{proof}

It remains to relate the messy process to the configuration process. To this end, we interpolate between them as follows.
For $k\in\Nb_0$ and for $T_0=0$, define the process $(\Psi^{(k)}_t( R_{\gs_0}, \boldsymbol{w}, \sP^-))_{t\geq 0}$ by
\begin{equation}\label{def.psiti}
\Psi^{(k)}_t( R_{\gs_0}, \boldsymbol{w}, \sP^-)
=
\begin{cases}
\Psi_t(R_{\gs_0},\boldsymbol{w},  \sP^-), & \text{for } t < T_k,\\[6pt]
\displaystyle
\bigcup_{C \in \Psi_{T_k}( R_{\gs_0}, \boldsymbol{w}, \smallsP^-)}
\overline{\Theta}_{t-T_k}\bigl(C,\iota(\boldsymbol{w}_k), \zeta_{T_k}(\sP^-)\bigr),
& \text{for } t\geq T_k.
\end{cases}
\end{equation}

The desired connection is based on the subsequent result. 

\begin{proposition}[Interpolation]\label{interpolation}
For all $t\geq 0$ and $k\in\Nb_0$, we have
\begin{align*}
&\Eb\Bigg[\frac{1}{2^{M_t}}
\sum_{\boldsymbol{w}\in\{\ddownarrow,\uuparrow\}^{M_t}}
\Pb\bigl(\Gamma^{(i)}\in \Psi_t^{(k)}(R_{\gs_0}, \boldsymbol{w}, \sP^-)\mid \sP^-\bigr)
\Bigg]
\\
&\qquad\qquad\qquad=
\Eb\Bigg[\frac{1}{2^{M_t}}
\sum_{\boldsymbol{w}\in\{{\ddownarrow,\uuparrow\}^{M_t}}}
\Pb\bigl(\Gamma^{(i)}\in \Psi_t^{(k+1)}(R_{\gs_0}, \boldsymbol{w}, \sP^-)\mid \sP^-\bigr)
\Bigg].
\end{align*}
\end{proposition}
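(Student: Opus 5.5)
The approach is to compare $\Psi^{(k)}$ and $\Psi^{(k+1)}$ directly. By \eqref{def.psiti}, the two processes coincide for $t<T_k$. If $t<T_{k+1}$, they also coincide: on $[T_k,T_{k+1})$ the messy process \eqref{ktokpo} evolves each cylinder $\longtilde C$ with the permuted Poisson process $\sigma^{-1}_{\smalllongtilde C}(\zeta_{T_k}(\sP^-))$. The process $\Psi^{(k)}$ instead uses the unpermuted $\zeta_{T_k}(\sP^-)$. Conditionally on $\sP^-$ restricted to $[0,T_k]$, however, both are Poisson processes with the same law, and the probability of $\Gamma^{(i)}$ lying in a disjoint union of cylinders is additive. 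So the difference is only in which realization each cylinder sees, not in the law. The plan is therefore to condition on $\mathcal F_{T_k}$ (the $\sigma$-field of $\sP^-$ up to $T_k$, together with $w_1,\dots,w_k$). Then, separately for each cylinder $C\in\Psi_{T_k}$, I replace the future Poisson process by its permuted version, using that $\sigma_C$ is $\mathcal F_{T_k}$-measurable and that a Poisson process is invariant in law under relabeling.

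The key steps, in order, are as follows.

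Step 1. Write both sides as $\Eb[\sum_{C\in\Psi_{T_k}} h_C]$, summing over cylinders $C$ at time $T_k$. Here $h_C$ is the averaged (over $\bs w_k$) probability that $\Gamma^{(i)}$ falls into the set generated from $C$ after $T_k$. This uses additivity of all transitions and the fact that the cylinders stay pairwise disjoint. If disjointness is problematic, I would work with expected counts $\sum_C \Pb(\Gamma^{(i)}\in C)$ instead, which is all that the sums of probabilities actually require.

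Step 2. For $\Psi^{(k)}$, $h_C$ is the quantity $\Pb(\Gamma^{(i)}\in\overline\Theta_{t-T_k}(C,\iota(\bs w_k),\zeta_{T_k}\sP^-))$ averaged over the future path. Averaging over $\bs w_k$ uniform is the same as averaging over $\iota(\bs w_k)$ uniform. By \eqref{av1C} applied to the shifted process, the result is the unconditional configuration-process probability started from $C$.

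Step 3. For $\Psi^{(k+1)}$ on $\{t\ge T_{k+1}\}$, $C$ first evolves under $\sigma_C^{-1}(\zeta_{T_k}\sP^-)$ until $T_{k+1}$. Then it splits into the two Frankenstein-matching pieces \eqref{tkvstkm}, with the relabeled direction $w_{k+1}(\sigma_C^{-1})$. After that it evolves as a configuration process from those pieces. Averaging over $w_{k+1}$, the map $w_{k+1}\mapsto w_{k+1}(\sigma_C^{-1})$ is a bijection for fixed $\sigma_C$, and $v\mapsto f_{k+1}(v)$ is a bijection. The two pieces chosen at $T_{k+1}$, collected over both values of $w_{k+1}$, are therefore exactly $\{\mathrm I^{(v,1)},\mathrm I^{(v,2)}\}_{v}$, i.e.\ the full configuration-process split (Remark~\ref{Frankenstein_step}), each taken once. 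This is the same as what $\overline\Theta$ does on a uniform $v$, after reindexing the sum by $v$ rather than by $w$.

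Step 4. Undo the permutation $\sigma_C^{-1}$ on the Poisson process by invariance in law, after which both $h_C$ agree.

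The main obstacle is Step 3. The path component feeding the configuration process after $T_{k+1}$ must be correctly synchronized: $\Psi^{(k+1)}$ uses $\iota(\bs w_{k+1})$, and the pairing mixes $v$ with $f(v)$. The permutation of the Poisson process also changes the index order $\beta<\gamma$, which flips arrows. I would handle this by showing that, for fixed $\mathcal F_{T_{k+1}-}$, the map $w_{k+1}\mapsto(\text{piece}_1,\text{piece}_2)$ is a bijection onto the four pieces split two-and-two. Only then does the averaging $\frac12\sum_{w_{k+1}}$ reproduce $\frac12\sum_{v}$ of the configuration process. The subsequent path coordinates are shared and uniform, so they pose no issue.
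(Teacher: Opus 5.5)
Your proposal is correct and is essentially the paper's argument. You condition at $T_k$ and use relabelling invariance of the shifted Poisson process cylinder by cylinder. At $T_{k+1}$ you first undo the Frankenstein matching by summing over both values of $w_{k+1}$, which works because the bijection $f_{k+1}$ does not depend on $w_{k+1}$ and the later path coordinates are shared. Only then do you remove the permutation, which acts only on $(T_k,T_{k+1}]$, so its law-invariance uses the strong Markov property at the relabelling-invariant time $T_{k+1}$; this is the paper's concatenated Poisson process. The paper merely treats $\{T_k\le t<T_{k+1}\}$ and $\{T_{k+1}\le t\}$ as separate cases rather than jointly.

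Your Step~1 fallback is not optional but the one actually needed. Messy-process cylinders are not pairwise disjoint: for instance, $(R,R,\ast)$ and $(\ast,R,B)$ in Figure~\ref{fig.messy} overlap in $(R,R,B)$. So $\Pb(\Gamma^{(i)}\in\Psi_t)$ must be read as a sum over cylinders, as the paper implicitly does.
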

\begin{proof}
We decompose both sides of the identity according to the events $\{t<T_k\}$, $\{T_k\leq t<T_{k+1}\}$, and $\{T_{k+1}\leq t\}$, and verify the identity on each of these events.
By definition of $\Psi^{(k)}$ and $\Psi^{(k+1)}$, on the event $\{t<T_k\}$ we have, for all $\boldsymbol{w}$,
\[
\Psi^{(k)}_t(R_{\gs_0},\boldsymbol{w},\sP^-)
=
\Psi_t(R_{\gs_0},\boldsymbol{w},  \sP^-)= 
\Psi^{(k+1)}_t(R_{\gs_0}, \boldsymbol{w},\sP^-).
\]
We now turn to the second event. By construction and \eqref{ktokpo}, we have
\begin{align}\label{eq.second_event}
&\Eb\Big[\1_{\{T_k\leq t< T_{k+1}\}}\frac{1}{2^{M_t}}
\sum_{\boldsymbol{w}\in\{\ddownarrow,\uuparrow\}^{M_t}}
\Pb\big(\Gamma^{(i)}\in\Psi_t^{(k+1)}(R_{\gs_0}, \boldsymbol{w}, \sP^-)\mid \sP^-\big)\Big]\nonumber \\
&=\Eb\Big[\1_{\{T_k\leq t< T_{k+1}\}}\frac{1}{2^{k}}
\sum_{\boldsymbol{w}\in\{\ddownarrow,\uuparrow\}^{k}}
\Pb\big(\Gamma^{(i)}\in\Psi_t(R_{\gs_0},\boldsymbol{w},  \sP^-)\mid \sP^-\big)\Big]\nonumber \\
&=\Eb\Bigg[\frac{\1_{\{T_k\leq t< T_{k+1}\}}}{2^{k}} \!\!\!\!\!\!
\sum_{\boldsymbol{w}\in\{\ddownarrow,\uuparrow\}^{k}} \,\,
\sum_{\smalllongtilde{C}\in\Psi_{T_k}\!(R_{\gs_0}, \boldsymbol{w},\smallsP^-)}
\!\!\!\Pb\bigg(\Gamma^{(i)}\in\overline{\Theta}_{t-T_k}\Big(\longtilde{C},\iota(\boldsymbol{w}_k), \sigma^{-1}_{\smalllongtilde{C}}\big(\zeta_{T_k}(\sP^-)\big)\!\Big) \,\Big|\, \sP^-\bigg)\Bigg].
\end{align}
Since, for $\mathcal{F}_{T_k}$ as the natural filtration of $\sP^-$ up to time $T_k$, the invariance of $\sP^-$ under permutations gives us
\begin{align*}
 &\Eb\bigg[\frac{\1_{\{T_k\leq t< T_{k+1}\}}}{2^{k}}
\Pb\bigg(\Gamma^{(i)}\in\overline{\Theta}_{t-T_k}\Big( \longtilde{C},\iota(\boldsymbol{w}_k), \sigma^{-1}_{\smalllongtilde{C}}\big(\zeta_{T_k}(\sP^-)\big)\Big)\,\Big|\, \sP^-\bigg)
\,\Big|\, \mathcal{F}_{T_k}\bigg] \\
&=\Eb\bigg[\frac{\1_{\{T_k\leq t< T_{k+1}\}}}{2^{k}}
\Pb\Big(\Gamma^{(i)}\in\overline{\Theta}_{t-T_k}\big(\longtilde{C},\iota(\boldsymbol{w}_k), 
\zeta_{T_k}(\sP^-)\big)\mid \sP^-\Big)
\,\Big|\, \mathcal{F}_{T_k}\bigg],
\end{align*}
we can apply the tower property of conditional expectation twice to express \eqref{eq.second_event} as
\begin{align*}
&\Eb\bigg[\frac{\1_{\{T_k\leq t< T_{k+1}\}}}{2^{k}}
\sum_{\boldsymbol{w}\in\{\ddownarrow,\uuparrow\}^{k}}\,\,
\sum_{\smalllongtilde{C}\in\Psi_{T_k}(R_{\gs_0}, \boldsymbol{w},\smallsP^-)}
\Pb\Big(\Gamma^{(i)}\in\overline{\Theta}_{t-T_k}\big(\longtilde{C},\iota(\boldsymbol{w}_k), \zeta_{T_k}(\sP^-)\big)\mid \sP^-\Big)\bigg]\\
&=\Eb\bigg[\frac{\1_{\{T_k\leq t< T_{k+1}\}}}{2^{k}}
\sum_{\boldsymbol{w}\in\{\ddownarrow,\uuparrow\}^{k}}
\Pb\big(\Gamma^{(i)}\in\Psi^{(k)}_t(R_{\gs_0},\boldsymbol{w}, \sP^-)\mid \sP^-\big)\bigg],
\end{align*}
where we have used \eqref{def.psiti} in the last step.
This proves the desired identity on the event $\{T_k\leq t<T_{k+1}\}$.
It remains to prove it on the event $\{T_{k+1}\leq t\}$. This requires some notation. For $t>T_k$, we denote by $M_t^{k}$ the number of interactive events in $\sP^-$ in $(T_k, t]$. For a permutation $\sigma$, we write $\sP^{-}_{k,\sigma^{-1}}$ for the Poisson process that coincides with $\sigma^{-1}(\zeta_{T_k}(\sP^-))$ on $(0,T_{k+1}-T_k]$ and with $\zeta_{T_{k+1}}(\sP^-)$ on $(T_{k+1}-T_k,\infty)$. Moreover, for $\bs w ^{(1)}\in \{\ddownarrow, \uuparrow\}^k$, let $(\bs {w}^{(1)}, \bs w^{(2)})\in \{\ddownarrow, \uuparrow\}^{\Nb}$ be the vector whose first $k$ entries are determined by $\bs w^{(1)}$ and and those from entry $k+1$ onward by $\bs w^{(2)}$.
This allows us to separate interactive events in the interval $(T_k, T_{k+1}]$ from those in $(T_{k+1}, \infty)$: 
\begin{align}\label{stardec}
&\Eb\Big[\frac{\1_{\{t> T_{k+1}\}}}{2^{M_t}}
\sum_{\boldsymbol{w}\in\{\ddownarrow,\uuparrow\}^{M_t}}
\Pb\big(\Gamma^{(i)}\in\Psi_t^{(k+1)}(R_{\gs_0}, \boldsymbol{w}, \sP^-)\mid \sP^-\big)\Big]\nonumber\\
&=\frac{1}{2^{k}}\sum_{\boldsymbol{w^*}\in\{\ddownarrow,\uuparrow\}^{k}}\Eb\Big[
\sum_{\boldsymbol{w}\in\{\ddownarrow,\uuparrow\}^{M_t^k}}
\frac{\1_{\{t> T_{k+1}\}}}{2^{M_t^k}}\Pb\Big(\Gamma^{(i)}\in\Psi_t^{(k+1)}\big(R_{\gs_0},(\boldsymbol{w}^*,\boldsymbol{w}),  \sP^-\big)\mid \sP^-\Big)\Big].
\end{align}
For any $\boldsymbol{w}^*\in\{\ddownarrow,\uuparrow\}^{k}$, we have
\begin{align*}
&\Eb\Big[
\sum_{\boldsymbol{w}\in\{\ddownarrow,\uuparrow\}^{M_t^k}}
\frac{\1_{\{t> T_{k+1}\}}}{2^{M_t^k}}
\Pb\Big(\Gamma^{(i)}\in\Psi_t^{(k+1)}\big(R_{\gs_0},(\boldsymbol{w}^*,\boldsymbol{w}), \sP^-\big)\mid \sP^-\Big)\Big]\\
&=\Eb\Bigg[\sum_{C\in\Psi_{T_{k+1}-}(R_{\gs_0},\boldsymbol{w}^*,\smallsP^-)}
\Eb\bigg[\sum_{\boldsymbol{w}\in\{\ddownarrow,\uuparrow\}^{M_t^k}}
\frac{\1_{\{t> T_{k+1}\}}}{2^{M_t^k}}
\bigg.\Bigg.\\
&\qquad\qquad\times\sum_{j=1}^{2}\bigg.\Bigg.\Pb\Big(\Gamma^{(i)}\in\overline{\Theta}_{t-T_{k+1}}\big( \mathrm{I}_j (w_{k+1},C), \iota(\boldsymbol{w}),\zeta_{T_{k+1}}(\sP^-)\big)\mid \sP^-\Big)
\mid \mathcal{F}_{T_{k+1}-}\bigg]\Bigg].
\end{align*}
where $\mathrm{I}_1(w_{k+1},C)$ and $\mathrm{I}_2(w_{k+1},C)$ are the two cylinders arising from $C$ as in \eqref{tkvstkm}. Since we sum over both possible values of $w_{k+1}$, we may interchange $\mathrm{I}_2(w_{k+1},C)$ with  $\mathrm{I}_2(w_{k+1}^c,C)$ whenever $f_{i+1}(\iota(w_{k+1}))\neq \iota(w_{k+1})$. In this way, we will reverse the Frankenstein matching at time $T_{k+1}$ and come back to the transitions of the configuration process at time $T_{k+1}$. For fixed $C\in \Psi_{T_{k+1}-}(R_{\gs_0},\boldsymbol{w}^*,\sP^-)$, we obtain 
\begin{align*}
&\sum_{\boldsymbol{w}\in\{\ddownarrow,\uuparrow\}^{M_t^k}} \sum_{j=1}^{2} \Pb\Big(\Gamma^{(i)}\in\overline{\Theta}_{t-T_{k+1}}\big( \mathrm{I}_j (w_{k+1},C), \iota(\boldsymbol{w}),\zeta_{T_{k+1}}(\sP^-)\big)\mid \sP^-\Big)\nonumber\\
&\qquad =\sum_{\boldsymbol{w}\in\{\ddownarrow,\uuparrow\}^{M_t^k}} \Big [ \Pb\Big(\Gamma^{(i)}\in\overline{\Theta}_{_{t-T_{k+1}}}\big( \mathrm{I}_1 (w_{k+1},C), \iota(\boldsymbol{w}),\zeta_{T_{k+1}}(\sP^-)\big)\mid \sP^-\Big)\nonumber\\
& \qquad \qquad +\Pb\Big(\Gamma^{(i)}\in\overline{\Theta}_{t-T_{k+1}}\big( \mathrm{I}_2 (f(\iota(w_{k+1})),C), \iota(\boldsymbol{w}),\zeta_{T_{k+1}}(\sP^-)\big)\mid \sP^-\Big) \Big ].
\end{align*}
By \eqref{tkvstkm}, the cylinders $\mathrm{I}_1 (w_{k+1},C)$ and $\mathrm{I}_2 (f(\iota(w_{k+1})),C)$ arise as outcomes $\mathrm{I}^{(\iota(w_k),1)}$ and $\mathrm{I}^{(\iota(w_k),2)}$ of an interactive event at time $T_{k+1}$ of the configuration process. This concludes the reversion of the Frankenstein matching.
As a consequence, and since, on $(T_k,T_{k+1})$, the process $\Psi^{(k)}$ evolves according to the configuration process, there exists a $\hat{C}\in \Psi_{T_{k}}(R_{\gs_0},\boldsymbol{w}^*,\sP^-)$ such that the above expression may be stated in terms of a $T_k$-shifted configuration process as
\[
\sum_{\boldsymbol{w}\in\{\ddownarrow,\uuparrow\}^{M_t^k}}\Pb\Big(\Gamma^{(i)}\in\overline{\Theta}_{t-T_{k}}\big( \hat{C},\iota(\boldsymbol{w}), \sP^-_{k,\sigma^{-1}_C}\big)\mid \sP^-\Big).
\]
Overall we have
\begin{align*}
&\Eb\bigg[\sum_{\hat{C}\in\Psi_{T_{k}}(R_{\gs_0},\boldsymbol{w}^*,\smallsP^-)}
\sum_{\boldsymbol{w}\in\{\ddownarrow,\uuparrow\}^{M_t^k}}
\frac{\1_{\{t> T_{k}\}}}{2^{M_t^k}}\,
\Pb\Big(\Gamma^{(i)}\in\overline{\Theta}_{t-T_{k}}\big( \hat{C},\iota(\boldsymbol{w}), \sP^-_{k,\sigma^{-1}_C}\big)\mid \sP^-\Big)\bigg]
\\
&=\Eb\bigg[\sum_{\hat{C}\in\Psi_{T_{k}}(R_{\gs_0},\boldsymbol{w}^*,\smallsP^-)}
\sum_{\boldsymbol{w}\in\{\ddownarrow,\uuparrow\}^{M_t^k}}
\frac{\1_{\{t> T_{k}\}}}{2^{M_t^k}}\,
\Pb\Big(\Gamma^{(i)}\in\overline{\Theta}_{t-T_{k}}\big(\hat{C},\iota(\boldsymbol{w}),  \zeta_{T_k}(\sP^-)\big)\mid \sP^-\Big)\bigg],
\end{align*}
where, in the last identity, we used the invariance of the law of $\sP^-$ under permutations. Summing over $\boldsymbol{w}^*\in\{\ddownarrow,\uuparrow\}^{k}$, multiplying by $2^{-k}$, and using \eqref{stardec}, the preceding identities yield
\begin{align*}
&\Eb\bigg[\frac{\1_{\{t> T_{k+1}\}}}{2^{M_t}}
\sum_{\boldsymbol{w}\in\{\ddownarrow,\uuparrow\}^{M_t}}
\Pb\bigl(\Gamma^{(i)}\in\Psi_t^{(k+1)}(R_{\gs_0}, \boldsymbol{w}, \sP^-)\mid \sP^-\bigr)\bigg]\\
&=\Eb\bigg[\frac{\1_{\{t> T_{k+1}\}}}{2^{M_t}}
\sum_{\boldsymbol{w}\in\{\ddownarrow,\uuparrow\}^{M_t}}
\Pb\bigl(\Gamma^{(i)}\in\Psi_t^{(k)}(R_{\gs_0}, \boldsymbol{w}, \sP^-)\mid \sP^-\bigr)\bigg],
\end{align*}
which concludes the proof.
\end{proof}
Due to their construction, we can directly link the configuration process $\overline{\Theta}(R_{\gs_0},\bs v, \sP^-)$ to the process $\Psi^{(0)}(R_{\gs_0},\iota(\bs v),\sP^-)$. Iterating the interpolation principle yields the desired connection to the messy process (obtained as $\Psi^{(k)}(\cdot)$ for $k\to \infty$). Overall, we can state
\begin{align}\label{eq.connection_messy_config}
 &\Eb \Big[\frac{1}{2^{M_t}} \sum_{\bs v \in \{\downarrow,\uparrow\}^{M_t}} \Pb\big(\Gamma^{(i)} \in \overline{\Theta}_t(R_{\gs_0},\bs v,\sP^-)\mid  \sP^{-}\big)\Big]\nonumber\\
&\quad =
 \Eb \Big[\frac{1}{2^{M_t}} \sum_{\bs v \in \{\downarrow,\uparrow\}^{M_t}} \Pb\big(\Gamma^{(i)} \in \Psi^{(0)}_t(R_{\gs_0},\iota (\bs v),\sP^-)\mid  \sP^{-}\big)\Big] \nonumber \\
&\quad =  \Eb \Big[\frac{1}{2^{M_t}} \sum_{\bs v \in \{\downarrow,\uparrow\}^{M_t}} \lim_{k \to \infty}\Pb\big(\Gamma^{(i)} \in \Psi^{(k)}_t(R_{\gs_0},\iota (\bs v),\sP^-)\mid  \sP^{-}\big)\Big] \nonumber \nonumber \\
&\quad =
\Eb \Big[\frac{1}{2^{M_t}} \sum_{\bs v\in \{\downarrow,\uparrow\}^{M_t}}  \Pb\big(\Gamma^{(i)} \in {\Psi}_t(R_{\gs_0},\iota(\bs {v}),\sP^-)\mid \sP^{-}\big)\Big].  
\end{align}
We now have everything in hand to establish the factorial moment duality from Theorem \ref{thm.factorial_duality} from a genealogical perspective.

\begin{proof}[Proof of Theorem \ref{thm.factorial_duality}]
Eq.~\eqref{eq.connection_messy_config} and Cor.~\ref{connection_Frankenstein_messy} together yield
\begin{align*}
&\Eb \Big[\frac{1}{2^{M_t}} \sum_{\bs v \in \{\downarrow,\uparrow\}^{M_t}} \Pb\big(\Gamma^{(i)} \in \overline{\Theta}_t(R_{\gs_0},\bs v,\sP^-)\mid  \sP^{-}\big)\Big]\\
&\quad =
\Eb \Big[\frac{1}{2^{M_t}} \sum_{\bs v\in \{\downarrow,\uparrow\}^{M_t}}  \Pb\big(\Gamma^{(i)} \in \overline{\Phi}_t( R_{\gs_0},\iota(\bs {v}),\sP^-)\mid \sP^{-}\big)\Big].
\end{align*}
The analogous statement for the random sample $R_{g_n}$ is obtained by randomizing over all choices of $R_{\gs_0}$.
Using this together with the averaging principles from Eq.~\eqref{av1C} and Cor.~\ref{av2B} as well as Equations \eqref{eq.connection_config_compat} and \eqref{eq.compatible_Frankenstein} we conclude that
\[
\Pb\big(\Gamma^{(i)} \in \Ss(\As_{[0,t]}(g_n), R_{g_n})\big)=\Pb\big(\Gamma^{(i)} \in \Theta_t(R_{g_n})\big)=\Pb\big(\Gamma^{(i)} \in \Phi_t(R_{g_n})\big)=\Eb\Big[\frac{i^{\underline{n_R(\Phi_t(R_{g_n}))}}}{N^{\underline{n_R(\Phi_t(R_{g_n}))}}}\Big].
\] 
By Lemma~\ref{pre-duality}, we thus recover the factorial moment duality
\begin{equation*}
    \Eb\Big[ \frac{\big(X^{(N)}_t\big)^{\! \vphantom{(N)} \underline{n}}}{N^{\underline{n}}}\, \Big  \vert \, X^{(N)}_0=i\Big] = \Eb\Big[ \frac{i^{\underline{n_R(\Phi_t)}}}{N^{\underline{n_R(\Phi_t)}}}\,\Big  \vert \,n_R(\Phi_0)=n  \Big].
\end{equation*} 
\end{proof}
As before, the left-hand side of the duality describes the probability to obtain only unfit individuals when sampling $n$ individuals without replacement from the population at time $t$ that initially had $i$ unfit individuals. The right-hand side tells us that we can as well start with a sample of $n$ unfit individuals at forward time $t$, run the Frankenstein process backward in time, count the number of $R$s at backward time $t$, and sample accordingly from the initial population. 

\section{Recovering the moment duality}\label{sec:moment_duality}
This section proves the diffusion limits of the Moran model and the $R$-counting process of the Frankenstein process, which together imply the moment duality from Cor. \ref{corollary_moment_duality}.

\subsection{Diffusion scaling limit of the Moran model}\label{subsec.difflimit_X}
Let $X^{(N)}_t$ be the number of unfit individuals in the Moran model with interactive neutral reproduction, FTW selection, and mutation parametrized by $r, \kappa, s_m^{(N)}$ and $u^{(N)}$ at time $t\geq 0$. Then $X^{(N)}=(X^{(N)}_t)_{t\geq 0}$ is a birth-death process on $[N]_0$. In line with the model description in Section~\ref{sec:MoMo}, the respective generator $\mathcal{L}_{X^{(N)}}$ acts on functions $f:[N]_0\to \Rb$ and is given by
\begin{equation}\label{eq.gen_Momo}
    \mathcal{L}_{X^{(N)}} =\mathcal{L}_{X^{(N)}}^{\text{n}}+\mathcal{L}_{X^{(N)}}^{u}+\sum_{m=1}^\infty\mathcal{L}_{X^{(N)}}^{s_m},
\end{equation}
where the effects of (interactive and non-interactive) neutral events are captured by
\begin{equation}\label{eq.gen_n}
    \mathcal{L}_{X^{(N)}}^{\text{n}}\big((f(n)\big)= \lambda_n^{(N)} \big(f(n+1)-f(n)\big) + \eta_n^{(N)}\big(f(n-1)-f(n)\big)
\end{equation}
with
\begin{equation}\label{eq.lambda_eta}
\begin{aligned} 
    &\lambda_n ^{(N)}\coloneqq \frac{\r}{2} \, n \frac{N-n}{N} + \frac{\kappa}{2} n \frac{N-n}{N} \frac{N-n}{N}, \\ 
    &\eta_n^{(N)} \coloneqq \frac{\r}{2} (N-n) \frac{n}{N} +  \frac{\kappa}{2} (N-n) \frac{N-n}{N} \frac{n}{N}=\lambda_n ^{(N)} .  
\end{aligned}
\end{equation}
Similarly, for the effects of mutation 
$(\mathcal{L}_{X^{(N)}}^{u})$ and FTW selection $(\mathcal{L}_{X^{(N)}}^{s_m})$, we have
\begin{align*} 
 \mathcal{L}_{X^{(N)}}^{u} \big(f(n)\big) &\coloneqq u^{(N)} \, \nu_{1} (N-n)\big( f(n+1)-f(n)\big) + u^{(N)} \, \nu_{0} n \big( f(n-1)-f(n) \big), \\
\mathcal{L}_{X^{(N)}}^{s_m} \big(f(n)\big) &\coloneqq
s_m^{(N)} \, n \Big( 1-\Big(\frac{n}{N}\Big)^m\Big) \big(f(n-1)-f(n)\big)  
 \end{align*}
with the convention that $f(-1)\coloneqq 0 \eqqcolon f(N+1)$. The latter generators are standard for a Moran model with mutation and FTW selection, see \cite{BEH23} for more details.

We will now establish that the diffusion limit of our Moran model is given by a Wright--Fisher-type SDE. While the drift term captures the effects of FTW selection and mutation, the diffusion term encodes both types of neutral reproduction events. 

\begin{proposition}\label{prop.diffusion_limit}
    Fix $r\geq 0$, $\kappa \geq 0$, and $N \in \Nb$. Assume that $Nu^{(N)}\to \theta\geq 0$, $N s^{(N)}_m \to \sigma_m\in \Rb_+$, and $\sum_{m=1}^\infty N s_m^{(N)} \to \sum_{m=1}^\infty \sigma_m$, with $\sum_{m=1}^\infty m\,\sigma_m<\infty$. Suppose that $X_0^{(N)}/N \to X_0$ in distribution as $N \to \infty$. Then $\Big( X^{(N)}_{Nt}/N\Big)_{t \geq 0}$ converges in distribution to the solution $(X_t)_{t \geq 0}$ of the SDE
    \begin{equation*}
        \dd X_t = \Big(-\sum_{m=1}^{\infty}\big(\sigma_m X_t(1-X_t^m)\big) +\theta \, \nu_0 (1-X_t)-\theta \, \nu_1 X_t \Big) \dd t + \sqrt{X_t(1-X_t)\big(r+\kappa(1-X_t)\big)} \dd B_t,
    \end{equation*}
    with initial value $X_0=x_0$ and $B$ standard Brownian motion.
\end{proposition}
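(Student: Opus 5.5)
We prove the statement by convergence of generators, in the standard Ethier--Kurtz framework for processes on the compact space $[0,1]$. Set $Y^{(N)}_t \coloneqq X^{(N)}_{Nt}/N$, a Markov process on $\{0,1/N,\dots,1\}\subset[0,1]$ with generator $\mathcal{G}^{(N)} g(x) \coloneqq N\,\mathcal{L}_{X^{(N)}}\big(g(\cdot/N)\big)(Nx)$. The candidate limit generator, for $g\in C^2([0,1])$, is
\[
\mathcal{G} g(x)=\Big(-\sum_{m\ge1}\sigma_m x(1-x^m)+b_\theta(x)\Big)g'(x)+\tfrac12\, x(1-x)\big(r+\kappa(1-x)\big)g''(x).
\]
Here $b_\theta$ is the linear mutation drift produced by $\mathcal{L}^u_{X^{(N)}}$. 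Read off from the generator as written, it is $\theta\nu_1(1-x)-\theta\nu_0 x$, which is the drift of \eqref{eq.full_SDE}. In the proposition the roles of $\nu_0$ and $\nu_1$ appear interchanged, so the indices must be reconciled with the mutation convention before finalising. The plan has three steps: (i) show $\sup_{x}|\mathcal{G}^{(N)}g(x)-\mathcal{G}g(x)|\to0$ for $g$ in a core, (ii) show the martingale problem for $\mathcal{G}$ is well posed, (iii) conclude by Ethier--Kurtz, Thm.~4.8.2 / Cor.~4.8.7, together with $Y^{(N)}_0\Rightarrow X_0$.

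For (i), Taylor-expand $g(x\pm1/N)-g(x)=\pm g'(x)/N+g''(x)/(2N^2)+o(N^{-2})$ uniformly in $x$.

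\emph{Neutral part.} Since $\lambda^{(N)}_n=\eta^{(N)}_n$ by \eqref{eq.lambda_eta}, the first-order terms cancel exactly. The second-order term is $N\cdot 2\lambda^{(N)}_{Nx}\cdot g''/(2N^2)=(\lambda^{(N)}_{Nx}/N)\,g''$, and
\[
\frac{\lambda^{(N)}_{Nx}}{N}=\frac r2x(1-x)+\frac\kappa2 x(1-x)^2 ,
\]
which is exactly the claimed diffusion coefficient. The remainder is $o(1)$ because $\lambda^{(N)}/N$ is bounded.

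\emph{Mutation part.} This gives $Nu^{(N)}\big(\nu_1(1-x)-\nu_0x\big)g'+O(u^{(N)})$, which tends to the linear drift because $Nu^{(N)}\to\theta$.

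\emph{Selection part.} This gives $-\sum_m Ns^{(N)}_m\,x(1-x^m)\,g'(x)+O\big(\sum_m s_m^{(N)}\big)$. Since $0\le x(1-x^m)\le1$, the termwise convergence $Ns^{(N)}_m\to\sigma_m$ combined with $\sum_m Ns^{(N)}_m\to\sum_m\sigma_m$ yields (a Scheffé-type argument for nonnegative summable sequences) $\sum_m|Ns_m^{(N)}-\sigma_m|\to0$. This gives uniform convergence in $x$ of the infinite sum, and the error term is $O(1/N)$.

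For (ii), note first that the drift is Lipschitz on $[0,1]$. Indeed $\big|\tfrac{d}{dx}\,x(1-x^m)\big|\le m+1$, so $\sum_m\sigma_m(m+1)<\infty$ by assumption. The diffusion coefficient $\sqrt{x(1-x)(r+\kappa(1-x))}$ (extended by $0$ outside $[0,1]$) is Hölder-$\tfrac12$, being the square root of a Lipschitz nonnegative function. Yamada--Watanabe therefore gives pathwise uniqueness, hence uniqueness in law, and existence follows by standard weak-existence results. At the boundaries $0,1$ the diffusion vanishes and the drift points inward or is zero, so the solution stays in $[0,1]$. Consequently the martingale problem for $(\mathcal{G},C^2([0,1]))$ is well posed. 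Alternatively, polynomials form a core, since $\mathcal{G}$ maps polynomials of degree $k$ into power series with summable coefficients.

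For (iii), given (i) and (ii) and the compactness of the state space (so tightness is automatic via the generator bounds), Ethier--Kurtz yields $Y^{(N)}\Rightarrow X$ in $D([0,\infty),[0,1])$.

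I expect the main technical obstacle to be the infinite selection sum: both the uniform convergence in (i) and the Lipschitz bound in (ii) hinge on the summability hypotheses, and these must be used as sketched above. The rest is routine Taylor expansion.
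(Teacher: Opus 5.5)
Your proof is correct and has the same core as the paper's:
\begin{itemize}
\item treat $(X^{(N)}_{Nt}/N)_{t\ge0}$ as a density-dependent family and Taylor-expand;
\item use $\lambda^{(N)}_n=\eta^{(N)}_n$ to cancel the first-order neutral term;
\item read off $\tfrac12x(1-x)(r+\kappa(1-x))$ from the second-order term.
\end{itemize}

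Where you differ is in the surrounding ingredients. The paper takes from \cite{ACD25} and Ethier--Kurtz (Thms.~8.2.1, 1.6.1) that the limit is Feller with $C^\infty([0,1])$ as a core. It therefore only needs uniform generator convergence on that core, and it delegates mutation and selection to \cite[Section 9]{BEH23}. You instead get uniqueness in law from Yamada--Watanabe, using a Lipschitz drift (from $\sum_m m\sigma_m<\infty$) and a $\tfrac12$-H\"older diffusion coefficient. You also treat selection explicitly, and the Scheff\'e step $\sum_m|Ns^{(N)}_m-\sigma_m|\to0$ shows exactly where the hypothesis $\sum_m Ns^{(N)}_m\to\sum_m\sigma_m$ enters.

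What your route buys is independence from any core statement, which is not automatic here. With infinitely many $\sigma_m>0$, the generator does not map polynomials of degree $k$ to polynomials of degree at most $k$. So your aside ``alternatively, polynomials form a core'' is unjustified as written, though you do not need it.

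The cost is in step (iii). The results you cite there, Ethier--Kurtz Thm.~4.8.2 and Cor.~4.8.7, presuppose a Feller generator with a core, which your route never establishes. You should instead use the martingale-problem version (Ethier--Kurtz, Thm.~4.8.10). There, relative compactness is automatic on the compact space $[0,1]$, given $\sup_N\|\mathcal{G}^{(N)}g\|_\infty<\infty$ from your step (i). Limit points are then identified by the uniqueness you proved in step (ii).

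Finally, your remark on the mutation drift is right. The generator yields $\theta\nu_1(1-x)-\theta\nu_0x$, which is what \eqref{eq.full_SDE} and the paper's own limiting generator $\mathcal{L}^\theta_X$ contain. The SDE displayed in the proposition therefore has $\nu_0$ and $\nu_1$ interchanged by a typo.
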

Note that, for $r=1-\kappa$ and $\theta=\sigma_m\equiv0$, the SDE reduces to \eqref{eq.SDE}. 
\begin{proof}
As argued in \cite[Proof of Theorem 2.1]{ACD25} by using \cite[Theorems 8.2.1 and 1.6.1]{EK86}, the process $X$ is Feller and the set $C^\infty\big([0,1]\big)$ forms a core for its generator. It hence suffices to prove uniform convergence of the respective generators applied to polynomials on $[0,1]$. To this end, let $\mathcal{L}$ be the infinitesimal generator of the limiting diffusion $X$, that is,
\begin{equation*}
 \mathcal{L}_X =   \mathcal{L}^{\text{n}}_X + \mathcal{L}^{\theta}_X +\sum_{m=1}^\infty \mathcal{L}^{\sigma_m}_X
\end{equation*}
with
\begin{align}
  \mathcal{L}^{\text{n}}_X \big(f(x)\big) &\coloneqq \frac{1}{2}\, x(1-x)\big(r+\kappa(1-x)\big) f''(x), \label{eq.gen_n_diff} \\
    \mathcal{L}^{\theta}_X \big(f(x)\big) &\coloneqq \theta \nu_{1} (1-x) f'(x)- \theta \nu_{0} x f'(x), \nonumber\\
  \mathcal{L}^{\sigma_m}_X \big(f(x)\big) &\coloneqq - \sigma_m x(1-x^m) f'(x), \nonumber
\end{align}
and $f \in C^3([0,1])$. Furthermore, let 
$\mathcal{L}_{\bX ^{(N)}}$ be the generator of the rescaled process $\bX ^{(N)}\coloneqq (X^{(N)}_{Nt}/{N})_{t \geq 0}$.
For $\mathcal{L}_{X^{(N)}}$ from \eqref{eq.gen_Momo} and for all $n \in [N]$, an elementary calculation shows that
\begin{equation}\label{eq.connection}
    \mathcal{L}_{\bX^{(N)}}\Big( f\Big( \frac{n}{N}\Big) \Big) = N \, \mathcal{L}_{X^{(N)}} \Big( f\Big( \frac{n}{N}\Big) \Big),
\end{equation}
that is, the collection of processes $\bX^{(N)}$ forms a density-dependent family in the sense of \cite[p. 455]{EK86}. To conclude the diffusion limit, we need to show that, for any $f \in C^3([0,1])$, 
\begin{equation*}
    \lim_{N \to \infty}\sup_{n \in [N]} \Big \vert \mathcal{L}_{\bX^{(N)}} f \Big(\frac{n}{N}\Big)-\mathcal{L}^{}_{X} f \Big(\frac{n}{N}\Big)\Big \vert =0.
\end{equation*}
Since the uniform convergence of $\mathcal{L}^{u}_{\bX^{(N)}}$ to $\mathcal{L}^{\theta}_X$ and $\mathcal{L}^{ s_m}_{\bX^{(N)}}$ to $\mathcal{L}^{\sigma_m}_{X}$ can be found in \cite[Section 9]{BEH23}, it remains to verify
the respective claim for $\mathcal{L}_{\overline{X}^{(N)}}^\text{n}$ and $\mathcal{L}_{X}^\text{n}$. We use \eqref{eq.gen_n}, \eqref{eq.lambda_eta}, \eqref{eq.gen_n_diff}, \eqref{eq.connection}, and a second-order Taylor-expansion of $f((n \pm 1)/N)$ around $f(n/N)$ to deduce
\begin{align*}
  \Big \vert \mathcal{L}^{\text{n}}_{\bX^{(N)}} f \Big(\frac{n}{N}\Big)-\mathcal{L}^\text{n}_{X} f \Big(\frac{n}{N}\Big)\Big \vert \!=&\Big \vert N \, \lambda_n^{(N)} \Bigg( f\Big( \frac{n+1}{N} \big) - f\Big( \frac{n}{N} \Big) \!\!\Bigg) 
+ N \, \eta_n^{(N)} \Bigg( f\left( \frac{n-1}{N} \right) - f \Big( \frac{n}{N} \Big) \!\! \Bigg)\\
 & \quad  - \frac{1}{2} \frac{n}{N} \frac{N-n}{N} \Big( r+\kappa \frac{N-n}{N}\Big) f''\Big(\frac{n}{N}\Big)\Big \vert \\
 \quad =& \Big \vert N \big(\lambda^{(N)}-\eta^{(N)}\big)  f'\Big( \frac{n}{N}\Big) + \frac{1}{2} \big(\lambda^{(N)}+\eta^{(N)}\big) f''\Big( \frac{n}{N} \Big)  \\
    & \quad  - \frac{1}{2} \frac{n}{N} \frac{N-n}{N} \Big( r+\kappa \frac{N-n}{N}\Big) f''\Big( \frac{n}{N} \Big) + \mathcal{O}(N^{-3}) \Big \vert \\
 \quad =& \vert \mathcal{O}(N^{-3})\vert, 
\end{align*}
where the last equality follows from $\lambda^{(N)}=\eta^{(N)}$ and $\lambda^{(N)}+\eta^{(N)}=\frac{n}{N}\frac{N-n}{N}(r+\kappa \frac{N-n}{N})$.
\end{proof}

\subsection{Diffusion scaling limit of the factorial moment dual}\label{subsec.difflimit_R}
This section proves that $Z^{(N)}$ from Def.~\ref{def.factorial_dual}, and hence the $R$-counting process $(n_R(\Phi_t))_{t\geq 0}$, has the process $Z$, characterized by \eqref{eq.diff_limit_Z} as its diffusion limit. We start with a helpful proposition.

\begin{proposition}\label{prop.convergence_rates}
  Let $\Ls_{Z^{(N)}}$ and $\Ls_Z$ denote the generators of the processes $Z^{(N)}$ and $Z$. Let $\Nb_{0, \Delta} \coloneqq \Nb \cup \{0, \Delta\}$ and let $f:\Nb_{0, \Delta}\to \Rb$ be a bounded function and denote its restriction to $[N]_{0, \Delta}$ by $f{\vert _N}$.
Then, for $k \in \Nb$ and $N \geq k$, we have
\begin{equation*}
    \lim_{N \to \infty }\sup_{n \in [k]} \big\vert N\Ls_{Z^{(n)}}f\big\vert_N(n)-\Ls_Z f(n)\big\vert= 0.
\end{equation*}
\end{proposition}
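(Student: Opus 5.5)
The plan is a direct, term-by-term comparison of the rates of $N\Ls_{Z^{(N)}}$ and $\Ls_Z$ on the finite set $n\in[k]$. Since the supremum runs over finitely many $n$ and $f$ is bounded, it suffices to show that, for each fixed $n\le k$ and each target state $n'$, the rate of $n\to n'$ multiplied by $N$ converges to the corresponding rate of $Z$. We also need the total mass of upward jumps to be uniformly controlled, so that the infinite sum over jump sizes converges. Throughout we work under the scaling of Prop.~\ref{prop.diffusion_limit}, namely $Nu^{(N)}\to\theta$, $Ns_m^{(N)}\to\sigma_m$ and $\sum_m Ns_m^{(N)}\to\sum_m\sigma_m$, and we write $\Ls_{Z^{(N)}}$ with $s_m=s_m^{(N)}$ and $u=u^{(N)}$.

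\textbf{Step 1: neutral and mutation rates.} For fixed $n$:
\begin{itemize}
\item The downward rate satisfies $N\cdot\frac1N\binom n2\bigl(r+\kappa\frac{N-n+1}{N}\bigr)\to\frac{n(n-1)}{2}(r+\kappa)$.
\item The interactive upward rate satisfies $N\cdot\frac{\kappa}{N}\binom n2\frac{N-n}{N}\to\kappa\binom n2$.
\item The mutation rates $Nu^{(N)}\nu_1 n$ and $Nu^{(N)}\nu_0 n$ converge to $\theta\nu_1 n$ and $\theta\nu_0 n$.
\end{itemize}
These limits are to be matched with \eqref{eq.diff_limit_Z}. Reading the rates there as $\kappa\binom n2$ and $\binom n2(r+\kappa)$, as the limit of Def.~\ref{def.factorial_dual} dictates, we get exact agreement. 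All these limits are elementary, and the errors are $\mathcal O(1/N)$ uniformly in $n\le k$.

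\textbf{Step 2: selection rates.} We must show $\sum_{m\ge j}Ns_m^{(N)}p^n_{mj}\to\sigma_j$ for each $j\ge1$. The key fact is that $p^n_{mm}=(N-n)^{\underline m}/N^m\to1$ as $N\to\infty$ for fixed $m$ and $n\le k$. Moreover, for $m>j$ we have $p^n_{mj}\le\Pb(\text{some coordinate among the }m\text{ draws lies in }[n]\text{ or two draws coincide})\le \frac{mn}{N}+\binom m2\frac1N$. Combining the two gives
\begin{equation*}
\Bigl|\sum_{m\ge j}Ns_m^{(N)}p^n_{mj}-Ns_j^{(N)}p^n_{jj}\Bigr|\le \sum_{m>j}Ns_m^{(N)}p^n_{mj}.
\end{equation*}
This tail is handled by dominated convergence. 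The difficulty is the $m^2/N$ factor, whose control would require a second moment of $(\sigma_m)$. I would avoid this by splitting at a cutoff $m\le M$ versus $m>M$:
\begin{itemize}
\item For $m\le M$, the bound $(mn+m^2)/N\to0$ applies directly.
\item For $m>M$, we use $p^n_{mj}\le 1$ together with $\sum_{m>M}Ns_m^{(N)}\to\sum_{m>M}\sigma_m$, which is small for large $M$ by the assumed convergence of total mass.
\end{itemize}
This gives pointwise convergence of each jump rate $n\to n+j$.

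\textbf{Step 3: assembling the generator estimate.} We bound
\begin{equation*}
\Bigl|\sum_{j\ge1}(\text{rate}_N(j)-\text{rate}(j))\bigl(f(n+j)-f(n)\bigr)\Bigr|\le 2\|f\|_\infty\sum_{j\ge1}|\text{rate}_N(j)-\text{rate}(j)|.
\end{equation*}
To show the $\ell^1$ convergence on the right, I would use Scheffé-type reasoning. The rates converge pointwise by Step 2. For the total masses, the identity $\sum_j p^n_{mj}\le1$ gives $\sum_j\text{rate}_N(j)\le n\sum_m Ns_m^{(N)}\to n\sum_m\sigma_m$. For the reverse inequality, $\liminf\ge$ follows from Fatou. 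Hence the total masses converge, and Scheffé yields $\ell^1$ convergence.

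I expect this total-mass and uniform-tail step to be the main obstacle. The remaining effort is bookkeeping: the case $j=1$ combines the interactive and selective contributions, and the transition to $\Delta$ must be handled with the convention on $f(\Delta)$. Finally, taking the maximum over the finitely many $n\in[k]$ concludes the proof.
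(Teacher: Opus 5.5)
Your proof is correct. For the neutral and mutation terms it follows the paper's direct computation. The paper writes
$N\Ls^{\text{n}}_{Z^{(N)}}f(n)-(\Ls^{\text{c}}_Z+\Ls^{\text{pb}}_Z)f(n)=-\kappa\binom{n}{2}\bigl[\frac{n}{N}(f(n+1)-f(n))+\frac{n-1}{N}(f(n-1)-f(n))\bigr]$
and bounds this by a constant times $k^3/N$. Its generators $\Ls^{\text{c}}_Z$ and $\Ls^{\text{pb}}_Z$ carry the factor $\binom{n}{2}$, which confirms your reading of \eqref{eq.diff_limit_Z}.

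The genuine difference is the selection part. The paper does not prove it. It cites \cite[Lemma 9.1]{BEH23}, stated order by order (comparing $N\Ls^{s_m}_{Z^{(N)}}$ with $\Ls^{\sigma_m}_Z$ for each fixed $m$), and leaves the passage from this to the full sum over $m$ implicit. You instead regroup the rates by jump size $j$. You then prove pointwise convergence of each aggregated rate using $p^n_{mm}\to 1$, the bound $p^n_{mj}\le (mn+\binom{m}{2})/N$ for $m>j$, and a cutoff in $m$. Finally you get $\ell^1$ convergence over $j$ by Fatou and Scheff\'e, using $\sum_{j\ge 1}p^n_{mj}\le 1$ and $\sum_m Ns^{(N)}_m\to\sum_m\sigma_m$. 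This buys a self-contained argument that makes explicit where the total-mass assumption enters, namely in exchanging $\lim_N$ with the infinite sum, which the paper's citation glosses over.

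If you want a leaner version of the same idea, group by $m$ instead of $j$; this avoids both the quadratic bound and Scheff\'e. For fixed $m$, $\sum_{1\le j<m}p^n_{mj}\le 1-p^n_{mm}\to 0$, so $N\Ls^{s_m}_{Z^{(N)}}f(n)\to\Ls^{\sigma_m}_Z f(n)$ uniformly in $n\le k$. The tail $m>M$ is bounded by $2\|f\|_\infty k\bigl(\sum_{m>M}Ns^{(N)}_m+\sum_{m>M}\sigma_m\bigr)$, which is small by the same assumption.

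One minor inaccuracy: the mutation error is of order $|Nu^{(N)}-\theta|$ rather than $\mathcal{O}(1/N)$. This does not affect the conclusion.
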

\begin{proof}
    The generators follow from Definitions~\ref{def.factorial_dual} and Equation \eqref{eq.diff_limit_Z}, which yield
    \begin{equation*}
        \Ls_{Z^{(N)}} = \Ls_{Z^{(N)}}^{\text{n}} + \Ls^{u}_{Z^{(N)}}+\sum_{m= 1}^\infty \Ls_{Z^{(N)}}^{s_m} 
    \end{equation*}
    with
    \begin{align*}
\Ls_{Z^{(N)}}^{\text{n}}\big(f(n)\big)&= \frac{\kappa}{N} \binom{n}{2} \frac{N-n}{N}\big( f(n+1)-f(n) \big)\\
&\qquad + \frac{1}{N} \binom{n}{2}\Big(r +\kappa \frac{N-(n-1)}{N}\Big)\big( f(n-1)-f(n) \big), \\
        \Ls_{Z^{(N)}}^{s_m}\big(f(n)\big) &= s_m^{(N)} \, n \sum_{j=1}^m \, p_{mj}^n\big( f(n+j)-f(n) \big), \\
        \Ls^{u}_{Z^{(N)}} \big(f(n)\big) &= u^{(N)} \nu_0 n \big( f(\Delta)-f(n)\big) + u^{(N)} \nu_1 n \big( f(n-1)-f(n) \big)
    \end{align*}
    as the generator of the process $Z^{(N)}$; and similarly,
\begin{equation*}
        \Ls_Z = \Ls_Z^{\text{c}} + \Ls_Z^{\text{pb}} + \Ls^\theta_Z+ \sum_{m =1}^\infty\Ls_Z^{\sigma_m} 
    \end{equation*}
    with
    \begin{align*}
        \Ls_Z^{\text{c}}\big(f(n)\big)&= \binom{n}{2}(r+\kappa)\big( f(n-1)-f(n) \big), \\
        \Ls_Z^{\text{pb}}\big(f(n)\big)&= \kappa \binom{n}{2} \big( f(n+1)-f(n) \big), \\
        \Ls_Z^{\sigma_m}\big(f(n)\big) &= \sigma_m n \big( f(n+m)-f(n) \big), \\
        \Ls^\theta_Z \big(f(n)\big) &= \theta \nu_0 n \big( f(\Delta)-f(n)\big) + \theta \nu_1 n \big( f(n-1)-f(n) \big)
    \end{align*}
    as the generator of the process $Z$.
The convergence
\begin{equation*}
   \lim_{N \to \infty} \vert N \Ls_{Z^{(N)}}^{u}f\big\vert_N(n)-\Ls_Z^{\theta} f(n)\vert=0
\end{equation*}
follows from the assumption $Nu^{(N)}\to \theta$ as $N \to \infty$ and the convergence of 
\begin{equation*}
   \lim_{N \to \infty} \sup_{n \in [k]}\vert N \Ls_{Z^{(N)}}^{s_m}f\big\vert_N(n)-\Ls_Z^{\sigma_m} f(n)\vert=0
\end{equation*}
is covered in \cite[Lemma 9.1]{BEH23}. It remains to show the statement for the parts arising from (interactive) neutral events $\Ls^\text{n}_{Z^{(N)}}$ and their translation into coalescence $\Ls^{\text{c}}_Z$ and pairwise branching $\Ls^{\text{pb}}_Z$. For $n \in [k]$, we have
\begin{align*}
&\sup_{n \in [k]}\big\vert N \Ls_{Z^{(N)}}^{\text{n}} f\big\vert_N(n)-\big(\Ls_Z^{\text{c}}+\Ls_Z^{\text{pb}}\big) f(n)\big\vert \\
& \quad = \sup_{n \in [k]}\big \vert -\kappa \frac{n}{N} \binom{n}{2} \big( f(n+1)-f(n)\big)  -\kappa \frac{n-1}{N}\binom{n}{2}\big(f(n-1)-f(n)\big) \big \vert \\
& \quad \leq \, \kappa\, \binom{k}{2}\, C \,\frac{2k-1}{N} \to 0 \quad \text{as }N\to \infty,
\end{align*}
which concludes the proof. 
\end{proof}

We can now state the diffusion limit of the $R$-counting process $(n_r(\Phi_t))_{t\geq 0}$.

\begin{proposition}\label{diffusion_limit_R}
Let $Z^{(N)}$ be the factorial moment dual from Def.~\ref{def.factorial_dual} and $Z$ the process from Eq.~\eqref{eq.diff_limit_Z}. For $n \in \Nb$, define $\bar{Z}^{(N)}\coloneqq (Z_{Nt}^{(N)})_{t \geq 0}$ as the time-rescaled version of $Z^{(N)}$. Assume that $\bar{Z}_0^{(N)}\to Z_0$ in distribution as $N \to \infty$. Then, as $N \to \infty$, $\bar{Z}^{(N)}$ converges in distribution to $Z$.
\end{proposition}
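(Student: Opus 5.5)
The plan is to derive convergence of $\bar Z^{(N)}$ to $Z$ from the generator convergence in Prop.~\ref{prop.convergence_rates}, via the standard theorem relating convergence of generators to convergence of Markov chains (e.g.\ \cite[Thm.~4.2.11 or Cor.~4.8.7]{EK86} for pure-jump processes on a countable space). Since $\bar Z^{(N)}$ is a pure jump chain whose generator is $N\Ls_{Z^{(N)}}$, Prop.~\ref{prop.convergence_rates} says exactly that $N\Ls_{Z^{(N)}}f|_N\to \Ls_Z f$ uniformly on every finite set $[k]\cup\{0,\Delta\}$. The states $0$ and $\Delta$ are absorbing for both processes and cause no trouble.

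The difficulty is that the convergence is only locally uniform, and the state space $\Nb_{0,\Delta}$ is non-compact. $Z$ has upward jumps at rate $\sigma_m n$ and $\kappa\binom n2$, and pairwise branching grows quadratically. For this reason I would localize. Fix $k$, and let $\tau_k^{(N)}$ and $\tau_k$ be the first times $\bar Z^{(N)}$ and $Z$ exceed $k$. The stopped processes live on the finite set $[k']_0\cup\{\Delta\}$, where $k'$ is $k$ plus a maximal jump. Jumps from the FTW term are unbounded, but one can compactify by sending all states above $k$ to a single cemetery $\infty$. Each stopped process is then a Markov chain on a finite state space, with rates converging by Prop.~\ref{prop.convergence_rates}; the total rate into $\infty$ converges by the assumption $\sum_m Ns_m^{(N)}\to\sum_m\sigma_m$ together with \cite[Lemma 9.1]{BEH23}. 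For finite-state chains, convergence of the $Q$-matrices gives convergence in distribution on the Skorokhod space, for instance via $e^{tQ_N}\to e^{tQ}$ and convergence of finite-dimensional distributions plus tightness, or directly by \cite[Thm.~4.2.11]{EK86}. Combined with $\bar Z^{(N)}_0\to Z_0$, this yields convergence of the stopped processes.

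The main obstacle is removing the localization, which requires that $Z$ is non-explosive, i.e.\ $\tau_k\to\infty$ almost surely. Here the coalescence rate $\binom n2(r+\kappa)$ dominates the pairwise branching rate $\kappa\binom n2$ exactly when $r>0$. Selection only contributes linear growth, since $\sum m\sigma_m<\infty$. So I would use a Lyapunov function $V(n)=n$, for which $\Ls_Z V(n)\le (\sum_m m\sigma_m) n - r\binom n2 \le C n$. This gives non-explosion even for $r=0$, as the drift is then at most linear. Then $\Pb(\tau_k\le T)\to0$ as $k\to\infty$ uniformly in the initial law on compacts. The same bound holds uniformly in $N$ for $\bar Z^{(N)}$: its pairwise branching rate $\kappa\binom n2\frac{N-n}{N}$ is at most the limit rate, and its coalescence rate is at least $r\binom n2$. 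Together with tightness of $Z_0$, this gives convergence on $[0,T]$ for every $T$, and hence convergence in distribution of $\bar Z^{(N)}$ to $Z$ in $D([0,\infty),\Nb_{0,\Delta})$.
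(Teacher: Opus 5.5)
Your proof follows the same route as the paper: the paper also combines the locally uniform generator convergence of Prop.~\ref{prop.convergence_rates} with non-explosion of $Z$, arguing as in \cite{Cordero17,BEH23}. The differences are that you make the localization explicit and prove non-explosion yourself with $V(n)=n$, where the paper simply cites \cite[Theorem~1]{GPP21}; your argument works (set $V(\Delta)=0$; Dynkin's formula for the stopped process and Gronwall give $\Pb(\tau_k\le T)\le n\,e^{CT}/k$ with $C=\sum_m m\sigma_m$).

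One step is not justified as stated, though it is also unnecessary: the claim that the same Lyapunov bound holds uniformly in $N$ for $\bar Z^{(N)}$. You only check the neutral terms.

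\begin{itemize}
\item In an order-$m$ selection event the expected number of new lines is of order $\min(m,N)$, so the selection term contributes $n\sum_m Ns^{(N)}_m$ times a quantity of that order.
\item The hypotheses $Ns^{(N)}_m\to\sigma_m$ and $\sum_m Ns^{(N)}_m\to\sum_m\sigma_m$ do not bound this uniformly in $N$. For example, $Ns^{(N)}_m=\sigma_m+N^{-1/2}\1_{\{m=N\}}$ satisfies all assumptions but gives a contribution of order $nN^{1/2}$.
\end{itemize}

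Simply drop this step. The event $\{\tau^{(N)}_k\le T\}$ is exactly the event that your compactified chain sits in the absorbing state $\infty$ at time $T$. The finite-state convergence you already established therefore yields $\Pb(\tau^{(N)}_k\le T)\to\Pb(\tau_k\le T)$. Non-explosion of the limit alone then removes the localization.
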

\begin{proof}
The statement is proven analogously to \cite[Prop. 5.3]{Cordero17} and \cite[Prop. 2.15]{BEH23} and relies on Prop.~\ref{subsec.difflimit_R} and the non-explosion of the limit $Z$, see \cite[Theorem 1]{GPP21}. 
\end{proof}

As a direct consequence of Props. \ref{prop.diffusion_limit} and \ref{diffusion_limit_R}, we obtain the moment duality from Cor. \ref{corollary_moment_duality}; the proof is perfectly analogous to \cite[Proof of Thm. 2.16]{BEH23}.

\section{Appendix: sampling with and sampling without replacement}
Let us collect here the arguments leading to the combinatorial factor $p^n_{mj}$ of 
\eqref{eq.pnmj} and its relationship with $q^n_{mj}$ from \eqref{eq.qnjk}. Recall that $p_{mj}^n$ is defined as the probability that, when sampling $m$ lines with replacement from the set $[N]$, exactly $j$ distinct lines are not in the set $[n]$. For convenience of the reader, we recapitulate the argument for \eqref{eq.pnmj} from \cite[Proof of Proposition 2.2]{BEH23}. To obtain~$j\in[m]$ distinct lines that are not in $[n]$, we have to place the $m$ distinguishable marks on the $N$ distinguishable lines such that exactly $j$ out of the $N-n$ lines not in $[n]$ receive at least one mark. There are $ C^n_{mj}$ such possibilities, with $ C^n_{mj}$ as in \eqref{eq.pnmj}. To see this, observe that there are $(N-n)^{\underline{j}}$ possibilities to choose (without replacement) $j$ out of the $N-n$ lines not in $[n]$. For each of these possibilities, we must place some number $\ell\in \{j,\ldots,m\}$ of the $m$ marks on these $j$ lines, and the remaining $m-\ell$ ones on the~lines in $[n]$. We therefore sum over all possibilities to select~$\ell$ out of the~$m$ marks; and for every such $\ell$, there are $\binom{m}{\ell}$ such ways. For every such possibility, in turn, there are $\genfrac\{\}{0pt}{}{\ell}{j}$ ways to partition~$\ell$ marks into the~$j$ selected lines. Finally, for the remaining $m-\ell$ marks, there are $n^{m-\ell}$ ways to place them on the lines in $[n]$ and each configuration has probability $N^{-m}$. Hence, we obtain~\eqref{eq.pnmj}.

Let us now verify that the two sums in \eqref{eq.ftw.with_without} are equal. Inserting $\tilde s_j$ from \eqref{eq.stilde} and $q^n_{jk}$ from \eqref{eq.qnjk} into the sum on the left-hand side of \eqref{eq.ftw.with_without}, we get
\begin{equation}\label{eq.lhs}
\begin{split}
\sum_{j=k}^N \tilde s_j q^n_{jk} & = \sum_{j=k}^N \sum_{m=j}^\infty s_m \frac{N^{\underline j}}{N^m} \genfrac\{\}{0pt}{}{m}{j} \frac{\binom{N-n}{k}\binom{n}{j-k}}{\binom{N}{j}} \\ & = \sum_{m=k}^\infty \frac{s_m}{N^m} \sum_{j=k}^{\min\{m,N\}} N^{\underline j} \genfrac\{\}{0pt}{}{m}{j} \frac{\binom{N-n}{k}\binom{n}{j-k}}{\binom{N}{j}},
\end{split}
\end{equation}
where we have changed summation in the second step. Likewise, inserting $p^n_{mj}$ of \eqref{eq.pnmj} into the second sum in \eqref{eq.ftw.with_without} gives
\begin{equation}\label{eq.rhs}
\sum_{m=k}^\infty s_m p^n_{mk} =  \sum_{m=k}^\infty \frac{s_m}{N^m} C^n_{mk}.
\end{equation}
Comparing coefficients in the right-hand sides of \eqref{eq.lhs} and \eqref{eq.rhs} reveals that we have to show that
\[
 \sum_{j=k}^{\min\{m,N\}} N^{\underline j} \genfrac\{\}{0pt}{}{m}{j} \frac{\binom{N-n}{k}\binom{n}{j-k}}{\binom{N}{j}} = C^n_{mk}.
\]
But this is indeed true: assume that $m$ marks have been sampled without replacement. The left-hand side then counts all possibilities to first select some $j \in \{k, \ldots, \min\{m,N\}\}$ different lines on which to place the $m$ marks (for every $j$, there are $N^{\underline{j}} \genfrac\{\}{0pt}{}{m}{j}$ such possibilities) times the proportion of cases where, out of these $j$ lines, $k$ are not in $[n]$ (this proportion is $\binom{N-n}{k}\binom{n}{j-k}/\binom{N}{j}$). The result is $C^n_{mk}$, the number of possibilities to choose $k \in [m]$ lines that are not in $[n]$, as explained above.

\begin{acks}
All authors gratefully acknowledge financial support by the German Research Foundation (DFG) -- Project-ID 317210226 -- SFB 1283.
\end{acks}

\end{document}